\documentclass[leqno]{amsart}

\usepackage{amsmath}
\usepackage{amsfonts}
\usepackage{amssymb}
\usepackage{pdfsync}
\usepackage{color,graphicx,tikz-cd}
\usepackage{a4wide}
\usepackage{amscd,amsthm,latexsym}
\usepackage{hyperref}
\usepackage{relsize}
\usepackage[misc]{ifsym}
\usepackage{cite}

\title{Reverse plane partitions of skew staircase shapes and $q$-Euler numbers}

\author{Byung-Hak Hwang}
\address{Department of Mathematics, Seoul National University, Seoul,
South Korea}
\email{xoda@snu.ac.kr}

\author{Jang Soo Kim}
\address{
Department of Mathematics, Sungkyunkwan University, Suwon,
South Korea}
\email{jangsookim@skku.edu}

\author{Meesue Yoo}
\address{
Applied Algebra and Optimization Research Center, Sungkyunkwan University, Suwon,
South Korea}
\email{meesue.yoo@skku.edu (\Letter)}

\author{Sun-mi Yun}
\address{
Department of Mathematics, Sungkyunkwan University, Suwon,
South Korea}
\email{sera314@skku.edu}

\date{\today}

\thanks{The second author was supported by NRF grants \#2016R1D1A1A09917506 and \#2016R1A5A1008055.
The third author was supported by NRF grants \#2016R1A5A1008055 and \#2017R1C1B2005653.}

\keywords{reverse plane partition, Euler number, alternating permutation, lattice path, continued fraction}

\subjclass[2010]{Primary: 06A07; Secondary: 05A30, 05A15}

\date{\today}

\newtheorem{thm}{Theorem}[section]
\newtheorem{lem}[thm]{Lemma}
\newtheorem{prop}[thm]{Proposition}
\newtheorem{cor}[thm]{Corollary}
\theoremstyle{definition}
\newtheorem{exam}[thm]{Example}

\newtheorem{remark}[thm]{Remark}

\newcommand\PP{\mathcal{P}}

\newcommand\ZZ{\mathbb{Z}}

\newcommand\EE{\mathcal{E}}

\newcommand\VV{\mathcal{V}}
\newcommand\HP{\mathcal{HP}}

\newcommand\RPP{{\operatorname{RPP}}}
\newcommand\SSYT{{\operatorname{SSYT}}}
\newcommand\TAB{{\operatorname{TAB}}}
\newcommand\ST{{\operatorname{ST}}}

\newcommand\Sym{\mathfrak{S}}
\newcommand\lm{{\lambda/\mu}}

\newcommand\inv{\operatorname{inv}}
\newcommand\maj{\operatorname{maj}}
\newcommand\des{\operatorname{des}}
\newcommand\ndes{\operatorname{ndes}}
\newcommand\asc{\operatorname{asc}}
\newcommand\nasc{\operatorname{nasc}}
\newcommand\Des{\operatorname{Des}}
\newcommand\NDes{\operatorname{NDes}}
\newcommand\Asc{\operatorname{Asc}}
\newcommand\NAsc{\operatorname{NAsc}}
\newcommand\ND{\mathcal{ND}}

\newcommand\HT{\operatorname{ht}}
\newcommand\wt{\operatorname{wt}}
\newcommand\wtext{\operatorname{wt_{ext}}}

\newcommand\sgn{\operatorname{sgn}}
\newcommand\Dyck{\operatorname{Dyck}}
\newcommand\Sch{{\operatorname{Sch}}}

\newcommand\Alt{\operatorname{Alt}}
\newcommand\Ralt{\operatorname{Ralt}}
\newcommand\FA{F_{\operatorname{alt}}}
\newcommand\hs{\operatorname{hs}}

\newcommand\flr[1]{\left\lfloor #1\right\rfloor}
\newcommand\Qbinom[3]{\genfrac{[}{]}{0pt}{}{#1}{#2}_{#3}}
\newcommand\qbinom[2]{\Qbinom{#1}{#2}{q}}
\newcommand\LL{\mathcal{L}}
\newcommand\NN{\mathbb{N}}
\newcommand\qand{\qquad \mbox{and} \qquad}

\begin{document}

\begin{abstract}
Recently, Naruse discovered a hook length formula for the number of standard Young tableaux of a skew shape. 
Morales, Pak and Panova found two $q$-analogs of Naruse's hook length formula over semistandard Young tableaux (SSYTs) and reverse plane partitions (RPPs). 
As an application of their formula, they expressed certain $q$-Euler numbers,
which are generating functions for SSYTs and RPPs of a zigzag border strip, in terms of weighted Dyck paths. They found a determinantal formula for the generating function for SSYTs of a skew staircase shape and proposed two conjectures related to RPPs of the same shape. One conjecture is a determinantal formula for  the number of \emph{pleasant diagrams}  in terms of Schr\"oder paths and the other conjecture is a determinantal formula for the generating function for RPPs of a skew staircase shape in terms of $q$-Euler numbers. 

In this paper, we show that the results of Morales, Pak and Panova on the $q$-Euler numbers can be derived from previously known results due to Prodinger by manipulating continued fractions. These $q$-Euler numbers are naturally expressed as generating functions for alternating permutations with certain statistics involving \emph{maj}. It has been proved by Huber and Yee that these $q$-Euler numbers are generating functions for alternating permutations with certain statistics involving \emph{inv}. By modifying Foata's bijection we construct a bijection on alternating permutations which sends the statistics involving \emph{maj} to the statistic involving \emph{inv}. We also prove the aforementioned two conjectures of Morales, Pak and Panova. 
\end{abstract}

\maketitle
\tableofcontents

\section{Introduction}

The classical hook length formula due to Frame et al.~\cite{Frame1954} states that,
for a partition $\lambda$ of $n$, the number $f^{\lambda}$ of standard Young tableaux of shape $\lambda$ is given by
\[
f^{\lambda} = n! \prod_{u\in \lambda}\frac{1}{h(u)},
\]
where $h(u)$ is the \emph{hook length} of the square $u$.
See Section~\ref{sec:preliminaries} for all undefined terminologies in the introduction.

Recently, Naruse \cite{Naruse} generalized the hook length formula to standard Young tableaux of a skew shape as follows.
For partitions $\mu\subset \lambda$, the number $f^{\lm}$ of standard Young tableaux of shape $\lm$ is given by
\begin{equation}
  \label{eq:naruse}
f^{\lm} = |\lm|! \sum_{D\in\EE(\lm)} \prod_{u\in \lambda\setminus D}\frac{1}{h(u)},  
\end{equation}
where $\EE(\lm)$ is the set of \emph{excited diagrams} of $\lm$. 

Morales et al.~\cite{MPP1} found two natural $q$-analogs of Naruse's hook length formula over \emph{semistandard Young tableaux} (SSYTs) and \emph{reverse plane partitions} (RPPs):
\begin{equation}
  \label{eq:MPP1}
\sum_{\pi\in\SSYT(\lm)}q^{|\pi|} =\sum_{D\in\EE(\lm)} \prod_{(i,j)\in\lambda\setminus D}\frac{q^{\lambda_j'-i}}{1-q^{h(i,j)}}
\end{equation}
and
\begin{equation}
  \label{eq:MPP2}
\sum_{\pi\in\RPP(\lm)}q^{|\pi|} = \sum_{P\in\PP(\lm)} \prod_{u\in P} \frac{q^{h(u)}}{1-q^{h(u)}},
\end{equation}
where $\SSYT(\lm)$ is the set of SSYTs of shape $\lm$, $\RPP(\lm)$ is the set of RPPs of shape $\lm$ and $\PP(\lm)$ is the set of \emph{pleasant diagrams} of $\lm$. 

Let $\delta_n$ denote the staircase shape partition $(n-1,n-2,\dots,1)$. Morales et al. \cite{MPP2} found interesting connections between SSYTs or RPPs of shape $\lm$ with $q$-Euler numbers when $\lm$ is the skew staircase shape $\delta_{n+2k}/\delta_n$. To explain their results, we need some definitions.

Let $\Alt_{n}$ be the set of alternating permutations of $\{1,2,\dots,n\}$, where $\pi=\pi_1\pi_2\dots\pi_n$ is called \emph{alternating} if 
$\pi_1<\pi_2>\pi_3<\pi_4>\cdots$.  The \emph{Euler number} $E_n$ is defined to be the cardinality of $\Alt_{n}$. It is well known that
\[
\sum_{n\ge0} E_n\frac{x^n}{n!} = \sec x + \tan x.
\]
The even-indexed Euler numbers $E_{2n}$ are called the \emph{secant numbers} and the odd-indexed Euler numbers $E_{2n+1}$ are called the \emph{tangent numbers}.
We refer the reader to \cite{Stanley2010} for many interesting properties of alternating permutations and Euler numbers. 

When $\lambda/\mu=\delta_{n+2}/\delta_n$, the right hand sides of \eqref{eq:MPP1} and \eqref{eq:MPP2} are closely related to the $q$-Euler numbers $E_{n}(q)$ and $E^*_{n}(q)$ defined by
\begin{equation}
  \label{eq:En_maj}
E_{n}(q)=\sum_{\pi\in\Alt_{n}} q^{\maj(\pi^{-1})} \qand E^*_{n}(q)=\sum_{\pi\in\Alt_{n}} q^{\maj(\kappa_{n}\pi^{-1})},  
\end{equation}
where $\kappa_{n}$ is the permutation 
\[
(1)(2,3)(4,5)\dots(2\flr{(n-1)/2}, 2\flr{(n-1)/2}+1)
\]
 in cycle notation and $\maj(\pi)$ is the \emph{major index} of $\pi$.  As corollaries of  \eqref{eq:MPP1} and \eqref{eq:MPP2} for $\lambda/\mu=\delta_{n+2}/\delta_n$, 
Morales et al. \cite[Corollaries~1.7 and 1.8]{MPP2} obtained that
\begin{equation}
  \label{eq:MPP_Euler}
\frac{E_{2n+1}(q)}{(q;q)_{2n+1}} 
= \sum_{D\in\Dyck_{2n}}  \prod_{(a,b)\in D} \frac{q^b}{1-q^{2b+1}}
\end{equation}
and
\begin{equation}
  \label{eq:MPP_Euler*}
\frac{E^*_{2n+1}(q)}{(q;q)_{2n+1}} 
= \sum_{D\in\Dyck_{2n}} q^{H(D)} \prod_{(a,b)\in D} \frac{1}{1-q^{2b+1}},
\end{equation}
where $\Dyck_{2n}$ is the set of \emph{Dyck paths} of length $2n$, $H(D)=\sum_{(a,b)\in\HP(D)}(2b+1)$, $\HP(D)$ is the set of \emph{high peaks} in $D$ and 
\[
(a;q)_n = (1-a)(1-aq)\cdots(1-aq^{n-1}).
\]

The $q$-Euler numbers $E_n(q)$ and $E_n^*(q)$ have been studied by Prodinger \cite{Prodinger2008} in different contexts.
In this paper we give different proofs of \eqref{eq:MPP_Euler}  and \eqref{eq:MPP_Euler*} using Prodinger's results and continued fractions.

In fact, the $q$-Euler numbers $E_n(q)$ were first introduced by Jackson \cite{Jackson1904} and they have the following combinatorial interpretation, 
see \cite{Stanley76, Stanley2010}:
\begin{equation}
  \label{eq:En_inv}
E_{n}(q)=\sum_{\pi\in\Alt_{n}} q^{\inv(\pi)},
\end{equation}
where $\inv(\pi)$ is the number of \emph{inversions} of $\pi$. 
Observe that, by \eqref{eq:En_maj} and \eqref{eq:En_inv}, we have
\begin{equation}
  \label{eq:inv_maj}
\sum_{\pi\in\Alt_{n}} q^{\maj(\pi^{-1})} = \sum_{\pi\in\Alt_{n}} q^{\inv(\pi)}.  
\end{equation}
Foata's bijection \cite{Foata68} gives a bijective proof of \eqref{eq:inv_maj}.

The $q$-Euler numbers $E^*_n(q)$ have also been studied by Huber and Yee \cite{Huber2010} who showed that
\begin{equation}
  \label{eq:Huber}
E_{2n+1}^*(q)=\sum_{\pi\in\Alt_{2n+1}} q^{\inv(\pi)-\ndes(\pi_e)},
\end{equation}
where $\ndes(\pi_e)$ is the number of \emph{non-descents} in $\pi_e=\pi_2\pi_4\cdots\pi_{\flr{n/2}}$.
Again, by \eqref{eq:En_maj} and \eqref{eq:Huber}, we have
\begin{equation}
  \label{eq:inv_maj*}
\sum_{\pi\in\Alt_{2n+1}} q^{\maj(\kappa_{2n+1}\pi^{-1})} = \sum_{\pi\in\Alt_{2n+1}} q^{\inv(\pi)-\ndes(\pi_e)}.  
\end{equation}
In this paper we give a bijective proof of \eqref{eq:inv_maj*}  by modifying Foata's bijection. 

There are similar $q$-Euler numbers studied in \cite{Prodinger2008, Huber2010}.
We show that these $q$-Euler numbers have similar properties, see Theorems~\ref{thm:q-tan} and \ref{thm:q-sec}.

Note that in \eqref{eq:MPP1} the generating function for SSYTs of shape $\lm$ is expressed in terms of the excited diagrams of $\lm$. Morales et al. \cite[Corollaries~8.1~and~8.8]{MPP2} proved the following determinantal formulas for the number $e(\lm)$ of excited diagrams of $\lm$ and the generating function for SSYTs of shape $\lm$ for $\lm=\delta_{n+2k}/\delta_n$:
\begin{equation}
  \label{eq:excited}
e(\delta_{n+2k}/\delta_n) = \det(C_{n+i+j-2})_{i,j=1}^k 
= \prod_{1\le i<j\le n} \frac{2k+i+j-1}{i+j-1},
\end{equation}
where $C_n=\frac{1}{n+1}\binom{2n}{n}$ is the Catalan number, and 
\begin{equation}
  \label{eq:ssyt}
\sum_{\pi\in\SSYT(\delta_{n+2k}/\delta_n)} q^{|\pi|}
=\det\left( \frac{E_{2n+2i+2j-3}(q)}{(q;q)_{2n+2i+2j-3}} \right)_{i,j=1}^k.
\end{equation}

Let $p(\lm)$ be the number of pleasant diagrams of $\lm$. 
Morales et al. \cite{MPP2} showed that $p(\delta_{n+2}/\delta_n)=\mathfrak{s}_n$, where
$\mathfrak{s}_n=2^{n+2} s_n$ for the \emph{little Schr\"oder number} $s_n$. They proposed the following conjectures on  $p(\lm)$ and the generating function for RPPs of shape $\lm$ for $\lm=\delta_{n+2k}/\delta_n$.

\begin{thm} \cite[Conjecture 9.3]{MPP2} \label{conj:9.3}
We have
\[
p(\delta_{n+2k}/\delta_n) = 2^{\binom k2} \det(\mathfrak{s}_{n-2+i+j})_{i,j=1}^k.
\]  
\end{thm}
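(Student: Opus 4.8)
The plan is to prove the formula by the Lindstr\"om--Gessel--Viennot (LGV) lemma, mirroring the way the determinantal formula \eqref{eq:excited} for excited diagrams is obtained from non-intersecting Dyck paths, but replacing Dyck paths by Schr\"oder paths in order to account for the extra freedom recorded by pleasant diagrams. The two numbers to reconcile are $C_m$ (Dyck/Catalan, governing $e(\delta_{n+2k}/\delta_n)$) and $\mathfrak{s}_m=2^{m+2}s_m$ (little Schr\"oder, governing $p(\delta_{n+2k}/\delta_n)$), and the scheme of proof in the two cases should be parallel.

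First I would set up a lattice-path model for pleasant diagrams of $\delta_{n+2k}/\delta_n$. Recall from Morales--Pak--Panova that pleasant diagrams are controlled by excited diagrams through a power-of-$2$ weighting: each pleasant diagram is obtained from an excited diagram $D\in\EE(\lm)$ by freely adjoining cells lying outside a prescribed set of excited peaks, so that $p(\lm)$ is a sum over $\EE(\lm)$ of powers of $2$. For the skew staircase $\delta_{n+2k}/\delta_n$, excited diagrams are encoded by $k$-tuples of non-intersecting Dyck paths, the encoding underlying \eqref{eq:excited}. Incorporating the power-of-$2$ weight converts each Dyck path into a weighted Schr\"oder path: a level (diagonal) step records a cell that may or may not be adjoined and therefore carries weight $2$, while up and down steps carry weight $1$. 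The outcome is an identity
\[
p(\delta_{n+2k}/\delta_n)=\sum_{(P_1,\dots,P_k)}\ \prod_{i=1}^{k}\wt(P_i),
\]
where the sum ranges over $k$-tuples of non-intersecting weighted Schr\"oder paths, the $i$th path running between a source and sink shifted by $i$, and where the single-path case $k=1$ recovers $p(\delta_{n+2}/\delta_n)=\mathfrak{s}_n$ as a weighted Schr\"oder-path count.

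Next I would apply the LGV lemma. Because $\wt$ is multiplicative along steps, hence over tuples, and the endpoints are arranged so that the identity matching is the unique non-intersecting linkage, the signed sum over all tuples collapses to
\[
p(\delta_{n+2k}/\delta_n)=\det\bigl(a_{i,j}\bigr)_{i,j=1}^{k},\qquad
a_{i,j}=\sum_{P\colon u_j\to v_i}\wt(P),
\]
where $a_{i,j}$ is the weighted count of a single Schr\"oder path between the shifted endpoints indexed by $i$ and $j$. A direct computation of this one-path generating function should give $a_{i,j}=2^{\phi(i)+\psi(j)}\,\mathfrak{s}_{n-2+i+j}$, with a power of $2$ that splits into a row factor and a column factor whose exponents satisfy $\sum_{i}\phi(i)+\sum_{j}\psi(j)=\binom{k}{2}$; pulling these factors out of the rows and columns leaves exactly $2^{\binom{k}{2}}\det(\mathfrak{s}_{n-2+i+j})_{i,j=1}^{k}$.

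The main obstacle is the first step. Pleasant diagrams are considerably subtler than excited diagrams, so establishing the weighted Schr\"oder-path model---and in particular verifying that the power-of-$2$ weight is genuinely multiplicative over non-intersecting tuples, so that the sign-reversing involution behind LGV cancels every intersecting configuration without disturbing the weights---is the crux. The second delicate point is the power-of-$2$ bookkeeping: one must check that the discrepancy between the global exponent in $\mathfrak{s}_m=2^{m+2}s_m$ and the per-path exponents contributes a row-plus-column exponent summing to exactly $\binom{k}{2}$, which is what yields the stated prefactor rather than some other power of $2$.
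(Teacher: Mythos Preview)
Your outline has a concrete gap at exactly the point you flag as the ``main obstacle,'' and the way you propose to resolve it does not work.

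First, the weight that pleasant diagrams place on each Dyck path is $2^{|D_i\setminus\VV(D_i)|}$, i.e.\ a factor of $2$ per \emph{point} that is not a valley.  This is not a product of step weights, so the hypothesis of the ordinary Lindstr\"om--Gessel--Viennot lemma fails.  Trying to repair this by passing to Schr\"oder paths does not help in the direct way you describe: the natural bijection between Schr\"oder paths and valley-weighted Dyck paths (replace a valley by a horizontal step) does not respect non-intersection, since replacing a valley of $D_i$ by a horizontal step raises $D_i$ and can make it meet $D_{i+1}$.  So there is no clean identification of pleasant diagrams with non-intersecting Schr\"oder $k$-tuples whose step weights are multiplicative.

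Second, your proposed origin of the prefactor is wrong.  If one writes $\mathfrak{s}_m=2^{2m+1}d_m(1/2)$ with $d_m(q)=\sum_{D\in\Dyck_{2m}}q^{v(D)}$ and similarly expresses $p(\delta_{n+2k}/\delta_n)$ via $d_{n,k}(q)=\sum_{D_1<\cdots<D_k}q^{\sum v(D_i)}$, then the row/column powers of $2$ cancel exactly and the statement reduces to
\[
\det\bigl(d_{n+i+j-2}(q)\bigr)_{i,j=1}^k=q^{\binom{k}{2}}\,d_{n,k}(q),
\]
so the $2^{\binom{k}{2}}$ is \emph{not} a row/column factor at all.

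What the paper actually does is address both issues at once.  Because the valley weight is a point weight, they prove a modified LGV lemma (Lemma~\ref{lem:det}): the sign-reversing involution cancels tuples sharing a \emph{step}, but tuples that merely touch at a point survive, so the determinant equals a sum over \emph{weakly} non-intersecting tuples $D_1\le\cdots\le D_k$ with a correction factor $\prod_{p\in D_i\cap D_{i+1}}(1-1/\wtext(p))$ at each touching point.  The Schr\"oder paths then enter, but only as an intermediary to slide between the relations $\le$ and $<$ one pair at a time (Propositions~\ref{prop:MPP_key1} and \ref{prop:MPP_key2}).  Each such slide costs a factor of $q$, because a Dyck path with all peaks high has exactly one more peak than valleys; iterating over all $\binom{k}{2}$ pairs produces $q^{\binom{k}{2}}$.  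That is the genuine source of the prefactor, and it is precisely the mechanism your outline is missing.
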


\begin{thm}\cite[Conjecture 9.6]{MPP2} \label{conj:9.6}
We have
\[
\sum_{\pi\in \RPP(\delta_{n+2k}/\delta_n)} q^{|\pi|} 
=q^{-\frac{k(k-1)(6n+8k-1)}{6}} \det\left( \frac{E^*_{2n+2i+2j-3}(q)}{(q;q)_{2n+2i+2j-3}} \right)_{i,j=1}^k.
\]  
\end{thm}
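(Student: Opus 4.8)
The plan is to prove the determinant by a Lindstr\"om--Gessel--Viennot (LGV) argument on the reverse-plane-partition side, mirroring the structure that produces the $\SSYT$ determinant \eqref{eq:ssyt}. The starting point is the pleasant-diagram expansion \eqref{eq:MPP2}, which rewrites the left-hand side as a weighted sum over $\PP(\delta_{n+2k}/\delta_n)$. The base case $k=1$ is already in hand: the $1\times1$ determinant (prefactor $q^{0}$) asserts exactly
\[
\sum_{\pi\in\RPP(\delta_{n+2}/\delta_n)}q^{|\pi|}=\frac{E^*_{2n+1}(q)}{(q;q)_{2n+1}},
\]
which is \eqref{eq:MPP_Euler*}. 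More usefully, applying this identity with $n$ replaced by $n+i+j-2$ identifies the single-strip generating function $\sum_{\pi\in\RPP(\delta_{n+i+j}/\delta_{n+i+j-2})}q^{|\pi|}$ with the determinant entry $E^*_{2n+2i+2j-3}(q)/(q;q)_{2n+2i+2j-3}$. So the goal is to realize the thick skew staircase as a superposition of $k$ single strips governed by an LGV model.

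First I would encode each pleasant diagram of $\delta_{n+2k}/\delta_n$ as a $k$-tuple of lattice paths running along the $k$ consecutive diagonals of the shape, chosen so that the weight $\prod_{u\in P}q^{h(u)}/(1-q^{h(u)})$ factors across the paths and a single path in diagonal band $i\to j$ carries the pleasant-diagram weight of a single strip of size $n+i+j-2$. This is the same encoding that should underlie Theorem~\ref{conj:9.3}: specializing the weight to a plain count turns each single-path contribution into the little-Schr\"oder-type number $\mathfrak{s}_{n-2+i+j}$ and the non-intersecting families into $\det(\mathfrak{s}_{n-2+i+j})$ with the $2^{\binom k2}$ correction, so the two conjectures ought to follow from one construction. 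With sources $A_1,\dots,A_k$ and sinks $B_1,\dots,B_k$ placed along the diagonals, the reverse-plane-partition inequalities should force the path families to be non-intersecting, and the LGV lemma then collapses the tuple sum to the $\det$ of the single-path generating functions, i.e.\ of the single-strip $\RPP$ generating functions identified above.

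The remaining task is the monomial prefactor. The hook lengths $h(u)$ used in \eqref{eq:MPP2} are computed in the ambient staircase $\delta_{n+2k}$, and for a staircase the hook of $(i,j)$ is the odd number $2(n+2k-i-j)+1$; when a cell is reassigned to a thin constituent strip its hook shrinks by a fixed even amount depending on which diagonal it lies in. Factoring each thick-shape weight $q^{h}/(1-q^{h})$ into the corresponding thin-strip weight therefore pulls out an explicit power of $q$, and summing these shifts over the $k$ diagonals --- a quadratic weight accumulated across $k$ bands --- should reproduce the cubic exponent $-k(k-1)(6n+8k-1)/6$.

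I expect the main obstacle to be the combinatorial heart of the second paragraph: constructing the weight-preserving bijection between pleasant diagrams of the thick skew staircase and non-intersecting families of weighted lattice paths, and verifying that the $\RPP$ inequalities are equivalent to the non-intersection condition. Pleasant diagrams are considerably larger and less rigid than the excited diagrams that govern the $\SSYT$ case \eqref{eq:excited}, \eqref{eq:ssyt}, so unlike the flagged-tableaux picture for excited diagrams the path model here is not standard and must be engineered to yield both Theorem~\ref{conj:9.3} and its $q$-refinement simultaneously. A secondary, purely bookkeeping, difficulty is tracking the hook-length shifts precisely enough to pin down the prefactor rather than just its leading order.
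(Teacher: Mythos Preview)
Your overall architecture---encode pleasant diagrams of $\delta_{n+2k}/\delta_n$ as $k$-tuples of non-intersecting Dyck paths, identify the single-path weight with the determinant entry via the $k=1$ case, and then invoke LGV---matches the paper's structure, but there is a genuine gap at the LGV step, and your account of the prefactor is not correct.

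The single-path weight here is not a pure step weight: it is $\wt_\VV(D)=\prod_{p\in D}\wt(p)\prod_{p\in\VV(D)}\wtext(p)$ with $\wt(p)=1/(1-q^{2\HT(p)+1})$ and an \emph{extra} factor $\wtext(p)=q^{2\HT(p)+1}$ on each valley. Because a point may be a valley in one path and a peak in the other, the usual tail-swap involution is not weight-preserving, and standard LGV does \emph{not} collapse the determinant to a sum over strictly non-intersecting families. What it yields instead (Lemma~\ref{lem:det}) is a sum over \emph{weakly} non-intersecting tuples $D_1\le\cdots\le D_k$ carrying correction factors $\prod_{p\in D_i\cap D_{i+1}}(1-1/\wtext(p))$ at the touching points. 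The substantive work is then to pass from weakly to strictly non-intersecting tuples: the paper does this by routing through Schr\"oder paths (Propositions~\ref{prop:MPP_key1} and~\ref{prop:MPP_key2}), using the identity $\wt(p)(\wtext(p)-1)=-1$ and the relation $\wt_\HP(D)=q^{2j+1}\wt_\VV(D)$ for $D\in\Dyck_{2j}$ whose peaks are all high (Lemma~\ref{lem:last}). It is precisely this weak-to-strict conversion that produces the monomial $\prod_{i=1}^{k-1}(q^{2(n+2i)+1})^{i}=q^{k(k-1)(6n+8k-1)/6}$, not any shift in hook lengths. Indeed, under the identification $(i,j)\leftrightarrow(j-i,\,n+2k-i-j)$ the hook length $h(i,j)=2\HT(p)+1$ depends only on the height, so the point weight $1/(1-q^{2\HT(p)+1})$ is exactly the same whether the cell is viewed inside the thick shape or a thin strip; there is no ``hook shrinkage'' to extract. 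Your bookkeeping paragraph therefore cannot account for the prefactor, and the place where the prefactor actually arises is exactly the step your outline skips.
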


In this paper we prove their conjectures. We remark that the determinants in \eqref{eq:excited} and \eqref{eq:ssyt} can be expressed in terms of non-intersecting paths using Lindstr\"om--Gessel--Viennot lemma \cite{Lindstrom, GesselViennot}. However, the determinants in Theorems~\ref{conj:9.3} and \ref{conj:9.6} are not of such forms that we can directly apply Lindstr\"om--Gessel--Viennot lemma. 
Therefore, we need extra work to relate them with  non-intersecting paths. To this end we first express the determinants in terms of weakly non-intersecting paths. Then we resolve each pair of weakly non-intersecting paths into a pair of strictly non-intersecting paths one at a time. 

The remainder of this paper is organized as follows. In Section~\ref{sec:preliminaries} we provide necessary definitions and some known results. In Section~\ref{sec:q-euler} we give different proofs of \eqref{eq:MPP_Euler} and \eqref{eq:MPP_Euler*} using Prodinger's results, lattice paths and continued fractions. In Section~\ref{sec:foata} we state several properties of Prodinger's $q$-Euler numbers and give a bijective proof of \eqref{eq:inv_maj*} by constructing a Foata-type bijection. In Section~\ref{sec:MPP conjectures} we prove Theorems~\ref{conj:9.3} and \ref{conj:9.6}. 
In Section~\ref{sec:lascoux-pragacz-type}, we find a determinantal formula for $p(\lm)$ and the generating function for the reverse plane partitions of shape $\lm$ for a certain class of skew shapes $\lm$ including $\delta_{n+2k}/\delta_{n}$ and $\delta_{n+2k+1}/\delta_{n}$. 

\section{Preliminaries}
\label{sec:preliminaries}

In this section we give basic definitions and some known results.

\subsection{Permutation statistics and alternating permutations}

The set of integers is denoted by $\ZZ$ and the set of nonnegative integers is denoted by $\NN$. 
Let $[n]:=\{1,2,\dots,n\}$. 

A \emph{permutation} of $[n]$ is a bijection $\pi:[n]\to[n]$.  The set of permutations of $[n]$ is denoted by $\Sym_n$.  
We also consider a permutation $\pi\in \Sym_n$ as the word $\pi=\pi_1\pi_2\dots\pi_n$ of integers, where $\pi_i=\pi(i)$ for $i\in[n]$.
For $\pi,\sigma\in \Sym_n$, the product $\pi\sigma$ is defined as the usual composition of functions, i.e., $\pi\sigma(i) = \pi(\sigma(i))$ for $i\in[n]$. 

For a permutation $\pi=\pi_1\pi_2\dots\pi_n\in \Sym_n$ we define
\[
\pi_o = \pi_1\pi_3\cdots \pi_{2\flr{(n-1)/2}+1} \qand
\pi_e = \pi_2\pi_4\cdots \pi_{2\flr{n/2}}.
\]

Let $w=w_1w_2\dots w_n$ be a word of length $n$ consisting of integers. A \emph{descent} (resp.~\emph{ascent}) of $w$ is an integer $i\in[n-1]$ satisfying $w_i>w_{i+1}$ (resp.~$w_i<w_{i+1}$). A \emph{non-descent} (resp.~\emph{non-ascent}) of $w$ is an integer $i\in[n]$ that is not a descent (resp.~\emph{ascent}). In other words, $i$ is a non-descent (resp.~non-ascent) of $w$ if and only if either $i$ is an ascent (resp.~descent) of $w$ or $i=n$. We denote by $\Des(w)$, $\Asc(w)$, $\NDes(w)$ and $\NAsc(w)$ the sets of descents, ascents, non-descents and non-ascents of $w$ respectively. The \emph{major index} $\maj(w)$ of $w$ is defined to be the sum of descents of $w$.  An \emph{inversion} of $w$ is a pair $(i,j)$ of integers $1\le i<j\le n$ satisfying $w_i>w_j$. The number of inversions of $w$ is denoted by $\inv(w)$.

An \emph{alternating permutation} (resp.~\emph{reverse alternating permutation}) is a permutation $\pi=\pi_1\pi_2\dots\pi_n\in \Sym_n$
satisfying $\pi_1<\pi_2>\pi_3<\pi_4>\cdots$ (resp.~$\pi_1>\pi_2<\pi_3>\pi_4<\cdots$). 
The set of alternating permutations (resp.~reverse alternating permutations) in $\Sym_n$ is denoted by $\Alt_{n}$ (resp.~$\Ralt_{n}$).
Note that, for $\pi\in \Sym_n$, we have $\pi\in \Alt_{n}$ (resp.~$\pi\in \Ralt_{n}$) if and only if $\Des(\pi)=\{2,4,6,\dots\}\cap [n-1]$
(resp.~$\Des(\pi)=\{1,3,5,\dots\}\cap [n-1]$).

For $n\ge1$, we define
\begin{align*}
\eta_{2n} &= (1,2)(3,4)\cdots (2n-1,2n)\in \Sym_{2n},\\
\eta_{2n+1} &= (1,2)(3,4)\cdots (2n-1,2n)(2n+1) \in \Sym_{2n+1},\\
\kappa_{2n} &= (1)(2,3)(4,5)\cdots (2n-2,2n-1)(2n)\in \Sym_{2n},\\
\kappa_{2n+1} &= (1)(2,3)(4,5)\cdots (2n-1,2n) \in \Sym_{2n+1}.
\end{align*}

\subsection{$(P,\omega)$-partitions}

$(P,\omega)$-partitions are generalizations of partitions introduced by Stanley\cite{Stanley72}. 
Here we recall basic properties of $(P,\omega)$-partitions. 
We refer the reader to \cite[Chapter 3]{EC1} for more details on the theory of $(P,\omega)$-partitions. 

Let $P$ be a poset with $n$ elements. A \emph{labeling} of $P$ is a
bijection $\omega:P\to[n]$. A pair $(P,\omega)$ of a poset $P$ and its
labeling $\omega$ is called a \emph{labeled poset}.  
A \emph{$(P,\omega)$-partition} is a
function $\sigma:P\to \{0,1,2,\dots\}$ satisfying
\begin{itemize}
\item $\sigma(x)\ge \sigma(y)$ if $x\le_P y$,
\item $\sigma(x)>\sigma(y)$ if $x\le_P y$ and $\omega(x)> \omega(y)$.
\end{itemize}
We denote by $\PP(P,\omega)$ the set of $(P,\omega)$-partitions. The \emph{size} $|\sigma|$ of $\sigma\in\PP(P,\omega)$ is defined by 
\[
|\sigma| = \sum_{x\in P}\sigma(x).
\]

A \emph{linear extension} of $P$ is an arrangement
$(t_1,t_2,\dots,t_n)$ of the elements in $P$ such that if $t_i<_P t_j$
then $i<j$. The \emph{Jordan-H\"older set} $\LL(P,\omega)$ of $P$ with labeling $\omega$ is
the set of permutations of the form
$\omega(t_1)\omega(t_2)\cdots \omega(t_n)$ for some linear extension
$(t_1,t_2,\dots,t_n)$ of $P$. 

It is well known \cite[Theorem~3.15.7]{EC1} that the $(P,\omega)$-partition generating function can be written in terms of linear extensions:
\begin{equation}
  \label{eq:lin_ext}
\sum_{\sigma\in\PP(P,\omega)} q^{|\sigma|}
=\frac{\sum_{\pi\in\LL(P,\omega)}q^{\maj(\pi)}}{(q;q)_n}.
\end{equation}

\subsection{Semistandard Young tableaux and reverse plane partitions}

A \emph{partition} of $n$ is a weakly decreasing sequence $\lambda=(\lambda_1,\dots,\lambda_k)$ of positive integers summing to $n$. 
If $\lambda$ is a partition of $n$, we write $\lambda\vdash n$.

We denote the staircase partition by $\delta_n = (n-1,n-2,\dots,1)$. For $a,b\in\{0,1\}$, we define
\[
\delta_n^{(a,b)} =(n-1-a,n-2,n-3,\dots,2, 1-b).
\]
In other words,
\begin{align*}
\delta_n^{(0,0)} & =\delta_n = (n-1,n-2,n-3,\dots,1),\\
\delta_n^{(1,0)} & =(n-2,n-2,n-3,\dots,1),\\
\delta_n^{(0,1)} & =(n-1,n-2,\dots,2),\\  
\delta_n^{(1,1)} & =(n-2,n-2,n-3,\dots,2).
\end{align*}

Let $\lambda=(\lambda_1,\dots,\lambda_n)$ be a partition. The \emph{Young diagram} of $\lambda$ is the left-justified array of squares in which there are $\lambda_i$ squares in row $i$, see Figure~\ref{fig:young}. We identity a partition $\lambda$ with its Young diagram.  Considering $\lambda$ as the set of squares in the Young diagram, the notation $(i,j)\in \lambda$ means that the Young diagram of $\lambda$ has a square in the $i$th row and $j$th column. 
The \emph{transpose} $\lambda'$ of $\lambda$ is the partition $\lambda'=(\lambda'_1,\dots,\lambda_k')$ where $k=\lambda_1$ and 
$\lambda_i'$ is the number of squares in the $i$th column of the Young diagram of $\lambda$. For $(i,j)\in\lambda$, the \emph{hook length} of $(i,j)$ is defined by
\[
h(i,j)=\lambda_i+\lambda_j'-i-j+1.
\]

For two partitions $\lambda$ and $\mu$, we write $\mu\subset\lambda$ if the Young diagram of $\mu$ is contained in that of $\lambda$. 
In this case we define the \emph{skew shape} $\lm$ to be the set-theoretic difference $\lambda-\mu$ of the Young diagrams, see Figure~\ref{fig:young}.
The \emph{size} $|\lm|$ of a skew shape $\lm$ is the number of squares in $\lm$.  A partition $\lambda$ is also considered as a skew shape by $\lambda=\lambda/\emptyset$.

\begin{figure}[h]
  \centering
\includegraphics{./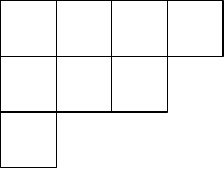}  \qquad \qquad
\includegraphics{./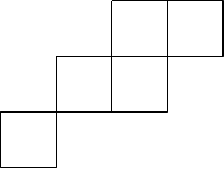}  
  \caption{The Young diagram of $(4,3,1)$ on the left and the skew shape $(4,3,1)/(2,1)$ on the right.}
  \label{fig:young}
\end{figure}

A \emph{semistandard Young tableau} (or SSYT for short) of shape $\lm$ is a filling of $\lm$ with nonnegative integers such that the integers are weakly increasing in each row and strictly increasing in each column.  A \emph{reverse plane partition} (or RPP for short) of shape $\lm$ is a filling of $\lm$ with nonnegative integers such that the integers are weakly increasing in each row and each column. A \emph{strict tableau} (or ST for short) of shape $\lm$ is a filling of $\lm$ with nonnegative integers such that the integers are strictly increasing in each row and each column. See Figure~\ref{fig:ssyt}.

\begin{figure}[h]
  \centering
\includegraphics{./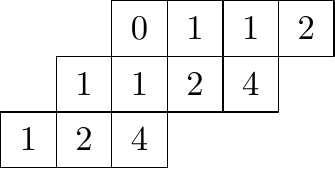}  \qquad 
\includegraphics{./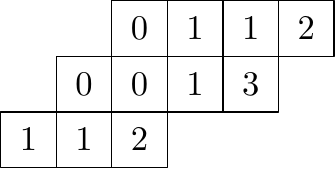} \qquad
\includegraphics{./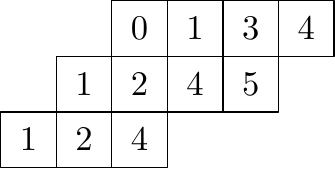}  
  \caption{A semistandard Young tableau of shape $(6,5,3)/(2,1)$ on the left, a reverse plane partition of shape $(6,5,3)/(2,1)$ in the middle and a strict tableau of shape $(6,5,3)/(2,1)$ on the right.}
  \label{fig:ssyt}
\end{figure}

 We denote by $\SSYT(\lm)$, $\RPP(\lm)$ and $\ST(\lm)$ the set of SSYTs, RPPs and STs of shape $\lm$, respectively. For an SSYT, RPP or ST $\pi$, 
the \emph{size} $|\pi|$ of $\pi$ is defined to be the sum of entries in $\pi$. 

SSYTs, RPPs and STs can be considered as $(P,\omega)$-partitions as follows. Let $\lm$ be a skew shape. 
Denote by $P_\lm$ the poset whose elements are the squares in $\lm$ with relation $x\le y$
if $x$ is weakly southeast of $y$. We denote by $\omega_\lm^\SSYT$, $\omega_\lm^\RPP$  and $\omega_\lm^\ST$ the labelings of $P_\lm$ which are uniquely determined by the following properties, see Figure~\ref{fig:labelings}. 
\begin{itemize}
\item For $(i,j),(i',j')\in\lm$, we have $\omega_\lm^\SSYT((i,j))\le \omega_\lm^\SSYT((i',j'))$ if and only if $j>j'$, or $j=j'$ and $i\le i'$.
\item For $(i,j),(i',j')\in\lm$, we have $\omega_\lm^\RPP((i,j))\le \omega_\lm^\RPP((i',j'))$ if and only if $j>j'$, or $j=j'$ and $i\ge i'$.
\item For $(i,j),(i',j')\in\lm$, we have $\omega_\lm^\ST((i,j))\le \omega_\lm^\ST((i',j'))$ if and only if $i<i'$, or $i=i'$ and $j\le j'$.
\end{itemize}

\begin{figure}[h]
  \centering
\includegraphics{./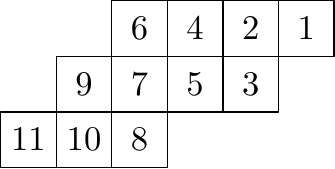}  \qquad 
\includegraphics{./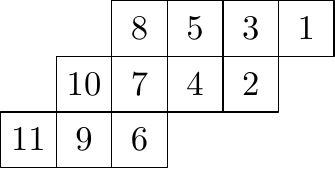}  \qquad
\includegraphics{./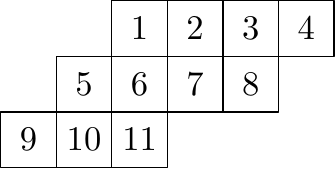}  
  \caption{The labelings $\omega_{\lm}^\SSYT$ on the left,
$\omega_{\lm}^\RPP$ in the middle and $\omega_{\lm}^\ST$ on the right for $\lm=(6,5,3)/(2,1)$.}
  \label{fig:labelings}
\end{figure}

We can naturally identify the elements in $\PP(P_\lm,\omega_\lm^\SSYT)$ (resp.~$\PP(P_\lm,\omega_\lm^\RPP)$ and $\PP(P_\lm,\omega_\lm^\ST)$)
with the SSYTs (resp.~RPPs and STs) of shape $\lm$. 
Therefore, we have
\begin{equation}
  \label{eq:SSYT->P}
\sum_{\pi\in\SSYT(\lm)} q^{|\pi|} = \sum_{\pi\in\PP(P_\lm,\omega_\lm^\SSYT)} q^{|\pi|},
\end{equation}
\begin{equation}
  \label{eq:RPP->P}
\sum_{\pi\in\RPP(\lm)} q^{|\pi|} = \sum_{\pi\in\PP(P_\lm,\omega_\lm^\RPP)} q^{|\pi|}
\end{equation}
and
\begin{equation}
  \label{eq:ST->P}
\sum_{\pi\in\ST(\lm)} q^{|\pi|} = \sum_{\pi\in\PP(P_\lm,\omega_\lm^\ST)} q^{|\pi|}.  
\end{equation}

It is easy to check that $\pi\in\LL(P_{\delta_{n+2}/\delta_{n}}, \omega_{\delta_{n+2}/\delta_{n}}^\SSYT)$ if and only if $\pi^{-1}\in\Alt_{2n+1}$. 
Thus we have
\begin{equation}
  \label{eq:SSYT->Alt}
\sum_{\pi\in\LL(P_{\delta_{n+2}/\delta_{n}}, \omega_{\delta_{n+2}/\delta_{n}}^\SSYT)} q^{\maj(\pi)} =\sum_{\pi\in\Alt_{2n+1}}q^{\maj(\pi^{-1})} = E_{2n+1}(q).
\end{equation}
Similarly, we have $\pi\in\LL(P_{\delta_{n+2}/\delta_{n}}, \omega_{\delta_{n+2}/\delta_{n}}^\RPP)$ if and only if $\pi=\kappa_{2n+1}\sigma^{-1}$ for some $\sigma\in\Alt_{2n+1}$. Thus,
\begin{equation}
  \label{eq:RPP->Alt}
\sum_{\pi\in\LL(P_{\delta_{n+2}/\delta_{n}}, \omega_{\delta_{n+2}/\delta_{n}}^\RPP)} q^{\maj(\pi)}
=\sum_{\pi\in\Alt_{2n+1}}q^{\maj(\kappa_{2n+1}\pi^{-1})} = E^*_{2n+1}(q).
\end{equation}

By \eqref{eq:lin_ext}, \eqref{eq:SSYT->P} and \eqref{eq:SSYT->Alt}, we have
\begin{equation}
  \label{eq:SSYT->E}
\sum_{\pi\in\SSYT(\delta_{n+2}/\delta_{n})}q^{|\pi|}  = \frac{E_{2n+1}(q)}{(q;q)_{2n+1}}.
\end{equation}
Similarly, by \eqref{eq:lin_ext}, \eqref{eq:RPP->P} and \eqref{eq:RPP->Alt}, we have
\begin{equation}
  \label{eq:RPP->E}
\sum_{\pi\in\RPP(\delta_{n+2}/\delta_n)}q^{|\pi|} = \frac{E^*_{2n+1}(q)}{(q;q)_{2n+1}}.
\end{equation}
Since every ST of shape $\delta_{n+2}/\delta_n$ is obtained from an RPP of shape $\delta_{n+2}/\delta_n$ by adding 1 to each entry in $\delta_{n+2}/\delta_{n+1}$, we have
\begin{equation}
  \label{eq:RPP=ST}
\sum_{\pi\in\ST(\delta_{n+2}/\delta_n)}q^{|\pi|} = \sum_{\pi\in\RPP(\delta_{n+2}/\delta_n)}q^{|\pi|+n+1}.
\end{equation}

\subsection{Excited diagrams and pleasant diagrams}

Excited diagrams are introduced by Ikeda and Naruse\cite{IkedaNaruse09} to describe a combinatorial formula 
for the Schubert classes in the equivariant cohomology ring of the Grassmannians, 
and Naruse \cite{Naruse} derived a hook length formula for standard Young tableaux of a skew shape given in \eqref{eq:naruse}
using equivariant cohomology theory and the excited Young diagrams. In \cite{MPP1}, two $q$-analogs of Naruse's hook length formula are given, 
and one of them, namely \eqref{eq:MPP2}, is defined as a sum over pleasant diagrams. Here, we give definitions of excited diagrams and pleasant diagrams. 

Consider a skew shape $\lm$. Let $D$ be a set of squares in $\lambda$. 
Suppose that $(i,j)\in D$, $(i+1,j), (i,j+1), (i+1,j+1)\not \in D$ and $(i+1,j+1)\in\lambda$.
Then replacing $(i,j)$ by $(i+1,j+1)$ in $D$ is called an \emph{excited move}. An \emph{excited diagram} of $\lm$ 
is a set of squares obtained from $\mu$ by a sequence of excited moves. We denote by $\EE(\lm)$ the set of excited diagrams of $\lm$. 
See Figure~\ref{fig:excited}. 

\begin{figure}[h]
  \centering
\includegraphics[scale=.7]{./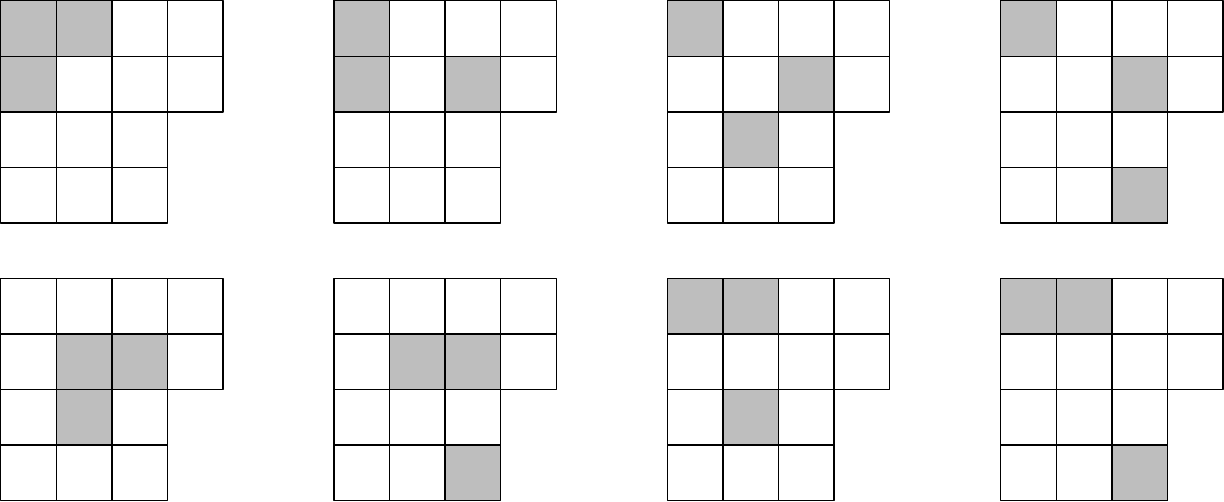}  
  \caption{There are $8$ excited diagrams of $(4,4,3,3)/(2,1)$. The squares in each excited diagram are shaded.}
  \label{fig:excited}
\end{figure}

A \emph{pleasant diagram} of $\lm$ is a subset of $\lambda\setminus D$ for an excited diagram $D$ of $\lm$. 
The set of pleasant diagrams of $\lm$ is denoted by $\PP(\lm)$.

We note that the original definition of a pleasant diagram is different from the definition given here. In \cite[Theorem~6.10]{MPP1} it is shown that the two definitions are equivalent.

\subsection{Dyck paths and Schr\"oder paths}

A \emph{nonnegative lattice path} is a set of points $(x_i,y_i)$ in $\ZZ\times\NN$ for $0\le i\le m$ satisfying
$x_0<x_1<\dots<x_m$. The set $\{(x_i,y_i),(x_{i+1},y_{i+1})\}$ of two consecutive points is called a \emph{step}. 
The step $\{(x_i,y_i),(x_{i+1},y_{i+1})\}$ is called an \emph{up step} (resp.~\emph{down step} and \emph{horizontal step})
if $(x_{i+1},y_{i+1})-(x_{i},y_{i})$ is equal to $(1,1)$ (resp.~$(1,-1)$ and $(2,0)$). 

A \emph{(little) Schr\"oder path} from $(a,0)$ to $(b,0)$ 
is a nonnegative lattice path $\{(x_i,y_i): 0\le i\le m\}$ such that
$(x_0,y_0)=(a,0)$, $(x_m,y_m)=(b,0)$, 
every step is one of an up step, down step and a horizontal step, 
and there is no horizontal step $\{(t,0),(t+2,0)\}$ on the $x$-axis. 
The set of Schr\"order paths from $(a,0)$ to $(b,0)$ is denoted by $\Sch((a,0)\to(b,0))$.
We also denote $\Sch_{2n}=\Sch((-n,0)\to(n,0))$.

A \emph{Dyck path} is a Schr\"oder path without horizontal steps. 
The set of Dyck paths from $(a,0)$ to $(b,0)$ is denoted by $\Dyck((a,0)\to(b,0))$.
We also denote $\Dyck_{2n}=\Dyck((-n,0)\to(n,0))$.

Let $D=\{(x_i,y_i):0\le i\le 2n\}\in\Dyck_{2n}$. Note that $x_i=-n+i$. 
A \emph{valley} (resp.~\emph{peak}) of $D$ is a point $(x_i,y_i)$ such that $0<i<2n$ and $y_{i-1}>y_{i}<y_{i+1}$ (resp.~$y_{i-1}<y_{i}>y_{i+1}$).  
A \emph{high peak} is a peak $(x_i,y_i)$ with $y_i\ge2$.

\section{$q$-Euler numbers and continued fractions}
\label{sec:q-euler}

Prodinger \cite{Prodinger2000} considered the probability $\tau^{\ge\le}_n(q)$ that a random word $w_1\dots w_n$ of positive integers
of length $n$ satisfies the relations $w_1\ge w_2 \le w_3 \ge w_4 \le \cdots$, where each $w_i$ is chosen independently randomly with probability $\mathrm{Pr}(w_i=k)=q^{k-1}(1-q)$ for $0<q<1$. For other choices of inequalities, for example $\ge$ and $<$, the probability $\tau^{\ge <}_n(q)$ is defined similarly. From the definition, one can easily see that
\begin{equation}
  \label{eq:SSYT->tau}
\sum_{\pi\in\SSYT(\delta_{n+2}/\delta_{n})}q^{|\pi|}  = \frac{\tau^{\ge<}_{2n+1} (q)}{(1-q)^{2n+1}}  ,
\end{equation}
\begin{equation}
  \label{eq:RPP->tau}
\sum_{\pi\in\RPP(\delta_{n+2}/\delta_{n})}q^{|\pi|}  = \frac{\tau^{\ge \le}_{2n+1}(q)}{(1-q)^{2n+1}}
\end{equation}
and
\begin{equation}
  \label{eq:ST->tau}
\sum_{\pi\in\ST(\delta_{n+2}/\delta_{n})}q^{|\pi|}  = \frac{\tau^{><}_{2n+1}(q)}{(1-q)^{2n+1}}.
\end{equation}
Observe that, by \eqref{eq:SSYT->E} and \eqref{eq:SSYT->tau}, 
\begin{equation}
  \label{eq:E->tau}
\frac{E_{2n+1}(q)}{(q;q)_{2n+1}} = \frac{\tau^{\ge<}_{2n+1} (q)}{(1-q)^{2n+1}}.
\end{equation}
Similarly, by \eqref{eq:RPP->E} and \eqref{eq:RPP->tau}, 
\begin{equation}
  \label{eq:E*->tau}
\frac{E^*_{2n+1}(q)}{(q;q)_{2n+1}} = \frac{\tau^{\ge \le}_{2n+1}(q)}{(1-q)^{2n+1}}.  
\end{equation}

In this section we show \eqref{eq:MPP_Euler} and \eqref{eq:MPP_Euler*} using Prodinger's results. Prodinger \cite{Prodinger2008} found a continued fraction expression for the generating functions of $\tau^{\ge<}_{2n+1}(q)$ and $\tau^{\ge\le}_{2n+1}(q)$. Using Flajolet's theory \cite{Flajolet1980} of continued fractions we show that
\eqref{eq:MPP_Euler} is equivalent to Prodinger's continued fraction. We prove \eqref{eq:MPP_Euler*} in a similar fashion. However, unlike \eqref{eq:MPP_Euler}, the weight of a Dyck path in \eqref{eq:MPP_Euler*} is not a usual weight used in Flajolet's theory. To remedy this we first express $E^*_{2n+1}(q)$ as a generating function for weighted Schr\"oder paths
and change it to a generating function of weighted Dyck paths.

We recall Flajolet's theory\cite{Flajolet1980} which gives a combinatorial interpretation for the 
continued fraction expansion as a generating function of weighted Dyck paths. 

Let $u=(u_0,u_1,\dots)$, $d=(d_1,d_2,\dots)$ and $w=(w_0,w_1,\dots)$ be sequences satisfying
$w_i=u_id_{i+1}$ for $i\ge0$. For a Dyck path $P\in\Dyck_{2n}$, we define the weight $\wt_w(P)$ with respect to $w$ to be
the product of the weight of each step in $P$, where the weight of an up step $\{(i,j),(i+1,j+1)\}$ is $u_j$ and
the weight of a down step $\{(i,j),(i+1,j-1)\}$ is $d_j$. 
Flajolet \cite{Flajolet1980} showed that the generating function for weighted Dyck paths has a continued fraction expansion:
\begin{equation}
  \label{eq:flajolet}
\sum_{n\ge0}\sum_{P\in\Dyck_{2n}} \wt_w(P) x^{2n} = \cfrac{1}{1-\cfrac{w_0 x^2}{1-\cfrac{w_1 x^2}{1-\cfrac{w_2 x^2}{1-\cdots}}}}  .
\end{equation}


\subsection{The $q$-Euler numbers $E_{2n+1}(q)$}


We give a new proof of \eqref{eq:MPP_Euler} using \eqref{eq:flajolet}.

\begin{prop}\cite[Corollary 1.7]{MPP2}\label{prop:qEuler}
We have
\begin{equation}\label{eqn:qEuler}
\frac{E_{2n+1}(q)}{(q;q)_{2n+1}}= \sum_{P\in \Dyck_{2n}}\prod_{(a,b)\in P}\frac{q^b}{1-q^{2b+1}}.
\end{equation}
\end{prop}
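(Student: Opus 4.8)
The plan is to prove \eqref{eqn:qEuler} by recognizing its right-hand side as a Flajolet continued fraction and matching that continued fraction against a known expression for the generating function of $\tau^{\ge <}_{2n+1}(q)$ due to Prodinger. By \eqref{eq:E->tau}, it suffices to show that $\tau^{\ge <}_{2n+1}(q)/(1-q)^{2n+1}$ equals the weighted Dyck path sum on the right of \eqref{eqn:qEuler}, so the whole argument can be phrased at the level of generating functions in an auxiliary variable $x$.

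First I would introduce the generating function $\sum_{n\ge 0} R_n\, x^{2n}$, where $R_n$ denotes the right-hand side of \eqref{eqn:qEuler}. The product $\prod_{(a,b)\in P} \frac{q^b}{1-q^{2b+1}}$ is a weight that depends only on the heights $b$ of the lattice points of $P$, so it is exactly of the form $\wt_w(P)$ covered by Flajolet's theorem \eqref{eq:flajolet}. Concretely, I need to factor the contribution of each point as a product over up steps and down steps so that the resulting $w_i = u_i d_{i+1}$ reproduces $\frac{q^b}{1-q^{2b+1}}$ summed over the appropriate points. The natural choice is to assign to each up step from height $j$ to height $j+1$ and each down step from height $j+1$ to height $j$ weights whose product over a full path telescopes to the stated product; one clean option is to put all the height-dependence on one type of step, giving $w_j$ equal to a suitable expression in $q^j$. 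Once the sequence $w = (w_0, w_1, \dots)$ is identified, \eqref{eq:flajolet} immediately yields
\[
\sum_{n\ge 0} R_n\, x^{2n} = \cfrac{1}{1 - \cfrac{w_0 x^2}{1 - \cfrac{w_1 x^2}{1 - \cdots}}}.
\]

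Next I would invoke Prodinger's continued fraction expansion \cite{Prodinger2008} for the generating function $\sum_{n\ge 0} \tau^{\ge <}_{2n+1}(q)\, x^{2n}$ (or its normalization by powers of $1-q$, to match \eqref{eq:E->tau}). Prodinger's result provides an explicit continued fraction whose partial numerators are certain rational functions of $q$; the core of the proof is a direct comparison showing that these partial numerators coincide, after the bookkeeping factor $(1-q)^{2n+1}$ is absorbed, with the $w_j x^2$ produced above. Since a continued fraction determines its coefficient sequence uniquely, matching the partial numerators forces $\sum_n R_n x^{2n} = \sum_n \tau^{\ge<}_{2n+1}(q) x^{2n} / (1-q)^{\bullet}$ term by term, and reading off the coefficient of $x^{2n}$ together with \eqref{eq:E->tau} gives \eqref{eqn:qEuler}.

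The main obstacle I anticipate is the bookkeeping in the weight factorization: the right-hand side of \eqref{eqn:qEuler} carries the height-dependent factor $\frac{q^b}{1-q^{2b+1}}$ at every lattice point of the path rather than at each step, so I must be careful to distribute these point-weights onto steps in a way consistent with Flajolet's up/down convention and with the normalization by $(1-q)^{2n+1}$ that relates $E_{2n+1}(q)$ to $\tau^{\ge<}_{2n+1}(q)$. A related subtlety is aligning the precise normalization in Prodinger's formula (which is stated for probabilities $\tau$, hence involves different powers of $1-q$ and $q$) with the form needed here; getting the powers of $q$ and $1-q$ exactly right in the partial numerators $w_j$ is where the calculation is most error-prone, even though each individual step is routine.
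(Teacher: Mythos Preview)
Your proposal is correct and follows essentially the same approach as the paper: apply Flajolet's theorem \eqref{eq:flajolet} to convert the point-weighted Dyck path sum into a continued fraction, and match it against Prodinger's continued fraction for the generating function of $E_{2n+1}(q)/(q;q)_{2n+1}$ (equivalently $\tau^{\ge<}_{2n+1}(q)/(1-q)^{2n+1}$). The paper resolves your anticipated bookkeeping obstacle by taking the symmetric splitting $u_i=d_i=\dfrac{q^i}{1-q^{2i+1}}$, so that $w_i=u_id_{i+1}=\dfrac{q^{2i+1}}{(1-q^{2i+1})(1-q^{2i+3})}$ and the leftover factor $\dfrac{1}{1-q}$ from the point at height $0$ matches the prefactor in Prodinger's expansion.
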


\begin{proof}
By the result of Prodinger \cite[Theorem 4.1]{Prodinger2000} (with replacing $z$ by $x/(1-q)$), we have the following  
continued fraction expansion :
 $$\sum_{n\ge 0}E_{2n+1} (q) \frac{x^{2n+1}}{(q;q)_{2n+1}}
 = \frac{x}{1-q}\cdot \cfrac{1}{1-\cfrac{qx^2/(1-q)(1-q^3)}{1-\cfrac{q^3 x^3/(1-q^3)(1-q^5)}{1-\cfrac{q^5 x^2/(1-q^5)(1-q^7)}{1-\cdots}}}}.
 $$
If we set $u_i=d_i=\frac{q^i}{1-q^{2i+1}}$ and $w_i=u_id_{i+1}$, then \eqref{eq:flajolet} implies 
$$\frac{x}{1-q}\sum_{n\ge 0}\left(\sum_{P\in\Dyck_{2n}}\wt_w(P) \right)x^{2n}=\sum_{n\ge 0}E_{2n+1}(q)\frac{x^{2n+1}}{(q;q)_{2n+1}}.$$
By comparing the coefficients of $x^{2n+1}$, we get 
$$\frac{E_{2n+1}(q)}{(q;q)_{2n+1}}=\frac{1}{1-q}\sum_{P\in\Dyck_{2n}}\wt_w(P).$$
If we change the weight of a Dyck path in the right hand side of \eqref{eqn:qEuler} defined on each point to 
the weight on each step, we obtain 
$$\sum_{P\in \Dyck_{2n}}\prod_{(a,b)\in P}\frac{q^b}{1-q^{2b+1}} = \frac{1}{1-q}\sum_{P\in\Dyck_{2n}}\wt_w(P),$$
which finishes the proof.
\end{proof}


By applying a similar argument, we can express the $q$-secant number $E_{2n}(q)$ as a generating function of weighted Dyck paths
whose weights are given in terms of principle specializations $s_{\lm}(q)=s_{\lm}(1,q,q^2,\dots)$ of skew Schur functions.

\begin{prop}\label{prop:q_Euler_even}
Let $w=(w_0,w_1,\dots)$ be the sequence given by
\[
w_i  = (-1)^i \Delta_{i-2}\Delta_{i+1}/\Delta_{i-1}\Delta_i,
\]
where $\Delta_i = 1$ for $i<0$, 
$\Delta_0 = s_{(0)} (q)$, $\Delta_2 = s_{(4)/(0)}(q)$, $\Delta_{2n}=s_{(3n+1,\dots, 2n+2)/(n-1,\dots, 1,0)}(q)$ and $\Delta_1 = s_{(2)}(q)$, $\Delta_3 = s_{(5,4)/(1,0)}(q)$, $\Delta_{2n-1}=s_{(3n-1,\dots, 2n)/(n-1,\dots, 1,0)}(q)$. 
Then
$$
\frac{E_{2n}(q)}{(q;q)_{2n}}=\sum_{P\in \Dyck_{2n}}\wt_w(P).
$$
\end{prop}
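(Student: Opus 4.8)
The plan is to follow the template of Proposition~\ref{prop:qEuler}: obtain a continued fraction for the secant generating function, interpret it through Flajolet's formula~\eqref{eq:flajolet} as a sum over weighted Dyck paths, and read off the identity by comparing coefficients of $x^{2n}$. The one genuinely new feature is that the continued fraction coefficients for the secant numbers are no longer elementary rational functions of $q$ as in the tangent case; recognizing them as the skew Schur ratios in the statement is the crux of the matter.

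First I would record, from Prodinger's work \cite{Prodinger2000,Prodinger2008} or from the general existence of a Stieltjes continued fraction attached to a moment sequence, that
\[
\sum_{n\ge0}\frac{E_{2n}(q)}{(q;q)_{2n}}\,x^{2n}
=\cfrac{1}{1-\cfrac{w_0x^2}{1-\cfrac{w_1x^2}{1-\cdots}}}
\]
for a unique sequence $w=(w_0,w_1,\dots)$. Granting this, Flajolet's formula~\eqref{eq:flajolet} gives $\sum_{n\ge0}\bigl(\sum_{P\in\Dyck_{2n}}\wt_w(P)\bigr)x^{2n}=\sum_{n\ge0}\frac{E_{2n}(q)}{(q;q)_{2n}}x^{2n}$, and comparing the coefficients of $x^{2n}$ yields $\frac{E_{2n}(q)}{(q;q)_{2n}}=\sum_{P\in\Dyck_{2n}}\wt_w(P)$ at once. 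The proposition therefore reduces to identifying the coefficients $w_i$ with $(-1)^i\Delta_{i-2}\Delta_{i+1}/(\Delta_{i-1}\Delta_i)$.

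For this I would invoke the classical correspondence between Stieltjes continued fractions and Hankel determinants: writing $\nu_n=E_{2n}(q)/(q;q)_{2n}$, each $w_i$ is a ratio of Hankel determinants formed from the sequence $\nu$ and its shift. The substantive step is then to evaluate these Hankel determinants as principal specializations of skew Schur functions. Here I would follow the pattern of~\eqref{eq:ssyt}, whose right-hand side is itself a Hankel-type determinant in the odd Euler numbers evaluating the SSYT generating function of a thick skew staircase: since $\nu_m=\sum_{T\in\SSYT(S_m)}q^{|T|}$ for a skew staircase strip $S_m$ (the even-length analogue of~\eqref{eq:SSYT->E}), the Lindström--Gessel--Viennot lemma \cite{Lindstrom,GesselViennot} expresses each Hankel determinant of the $\nu_n$ as the SSYT generating function of a thicker skew shape. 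These shapes are exactly the ones defining the $\Delta_i$: the unshifted and the once-shifted Hankel families yield, respectively, the even-indexed shapes $(3n+1,\dots,2n+2)/(n-1,\dots,1,0)$ and the odd-indexed shapes $(3n-1,\dots,2n)/(n-1,\dots,1,0)$, while the factor $(-1)^i$ records the sign accumulated in the determinant evaluation, already visible in $\Delta_2=s_{(4)}(q)=-\det(\nu_{i+j})_{0\le i,j\le1}$. Substituting these values into the Hankel ratios collapses $w_i$ to the claimed form.

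I expect the main obstacle to be precisely this Hankel determinant evaluation: one must pin down the correct skew shapes and, above all, track the signs so that everything assembles into the clean ratio $(-1)^i\Delta_{i-2}\Delta_{i+1}/(\Delta_{i-1}\Delta_i)$. Aligning the row and column shifts with the Jacobi--Trudi indices $\lambda_i-\mu_j-i+j$, and verifying that the unshifted and shifted Hankel families correspond to the even- and odd-indexed $\Delta$'s, is the delicate bookkeeping on which the argument rests; the remaining steps are direct analogues of the tangent case.
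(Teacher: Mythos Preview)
Your outline is sound in spirit but takes a longer road than the paper. The paper's proof is essentially a two-line citation: Huber and Yee \cite{Huber2010} identify $\sum_{n\ge0}E_{2n}(q)x^{2n}/(q;q)_{2n}$ with $1/\cos_q(x)$, and Lascoux--Pragacz \cite[Lemma~5.5]{Lascoux1988} already supply the continued fraction expansion of $1/\cos_q(x)$ \emph{with the $\Delta_i$ appearing explicitly as the coefficients}. Flajolet's formula~\eqref{eq:flajolet} then finishes immediately. No Hankel determinants, no LGV, no sign bookkeeping is needed in the paper itself.

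What you are proposing---pass through the Stieltjes/Hankel correspondence and then evaluate the resulting Hankel determinants of $\nu_n=E_{2n}(q)/(q;q)_{2n}$ as principal specializations of skew Schur functions via LGV---is effectively a re-derivation of Lascoux--Pragacz's lemma. That is a legitimate and self-contained route, and it has the merit of explaining \emph{why} those particular skew shapes $(3n+1,\dots,2n+2)/(n-1,\dots,1,0)$ and $(3n-1,\dots,2n)/(n-1,\dots,1,0)$ arise, rather than pulling them from a reference. The cost is exactly the ``delicate bookkeeping'' you flag: matching the unshifted and once-shifted Hankel families to the even- and odd-indexed $\Delta$'s, and tracking the sign $(-1)^i$, is real work that Lascoux--Pragacz have already done. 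Note also that the template you invoke, equation~\eqref{eq:ssyt}, concerns the staircase shapes $\delta_{n+2k}/\delta_n$, which are \emph{not} the shapes defining the $\Delta_i$ here; the LGV argument you need is of the same flavor but with different endpoints, so \eqref{eq:ssyt} is only an analogy, not a direct input. If you pursue your route, you should either carry out that determinant evaluation in full or, as the paper does, cite \cite[Lemma~5.5]{Lascoux1988} for it.
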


\begin{proof}
In \cite[Theorem 2.5]{Huber2010}, it was shown that 
$$\sum_{n=0}^\infty \frac{E_{2n}(q) x^{2n}}{(q;q)_{2n}}=\frac{1}{\cos _q (x)}=\left(  \sum_{n = 0}^\infty (-1)^n\frac{x^{2n}}{(q;q)_{2n}} \right)^{-1}.$$
The following continued fraction expansion of $1/\cos _q (x)$ has been obtained in \cite[Lemma 5.5]{Lascoux1988}:
\begin{align*}
1/\cos _q (x) &= 1+ s_2 (q) x^2 + s_{32/1}(q) x^4 +\cdots + s_{(n+1,\dots, 2)/(n-1,\dots, 1,0)}(q) x^{2n}+\cdots\\
& =\cfrac{1}{1-\cfrac{\Delta_1 x^2}{1+\cfrac{\Delta_2/\Delta_1 x^2}{1-\cfrac{\Delta_3/\Delta_1\Delta_2 x^2}{ \cfrac{\cdots}{1+\cfrac{(-1)^n x^{2}\Delta_n \Delta_{n+3}/\Delta_{n+1}\Delta_{n+2}}{\cdots }}}}}}.
\end{align*}
By \eqref{eq:flajolet}, $1/\cos_q (x)$ can be interpreted as a generating function of weighted Dyck paths. 
This finishes the proof.
\end{proof}


\subsection{The $q$-Euler numbers $E_{2n+1}^*(q)$}


By using Prodinger's result on $E_{2n+1}^\ast(q)$, we give a new proof of \eqref{eq:MPP_Euler*}.

\begin{prop}\cite[Corollary 1.8]{MPP2}
We have
$$
\frac{E^*_{2n+1}(q)}{(q;q)_{2n+1}} 
= \sum_{P\in\Dyck_{2n}} q^{H(P)} \prod_{(a,b)\in P} \frac{1}{1-q^{2b+1}}.
$$
\end{prop}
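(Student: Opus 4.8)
The plan is to follow the strategy of the proof of Proposition~\ref{prop:qEuler}: begin from a continued fraction of Prodinger and interpret it through Flajolet's theory. Combining \eqref{eq:E*->tau} with Prodinger's continued fraction for the generating function of $\tau^{\ge\le}_{2n+1}(q)$ from \cite{Prodinger2000,Prodinger2008}, I would first write $\sum_{n\ge0}\frac{E^*_{2n+1}(q)}{(q;q)_{2n+1}}x^{2n+1}$ as an explicit continued fraction. The obstruction, flagged in the discussion preceding \eqref{eq:flajolet}, is that the weight $q^{H(P)}\prod_{(a,b)\in P}\frac{1}{1-q^{2b+1}}$ in the desired formula is \emph{not} a product of per-step weights $u_j,d_j$: the factor $q^{H(P)}=\prod_{(a,b)\in\HP(P)}q^{2b+1}$ is supported on the high peaks of $P$ and cannot be read off directly from \eqref{eq:flajolet}.

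To remedy this I would pass through Schröder paths. Prodinger's continued fraction for $\tau^{\ge\le}$ contains level-step terms, so the relevant tool is the extension of Flajolet's formula \eqref{eq:flajolet} that allows level steps, in which a continued fraction of the form
\[
\cfrac{1}{1-c_0x^2-\cfrac{w_0x^2}{1-c_1x^2-\cfrac{w_1x^2}{1-\cdots}}}
\]
is the generating function for weighted Schröder paths, with $c_b$ the weight of a horizontal step at height $b$ and $w_b=u_bd_{b+1}$ recording a matched up/down pair. Reading off the coefficients of Prodinger's expansion, I would obtain
\[
\frac{E^*_{2n+1}(q)}{(q;q)_{2n+1}}=\frac{1}{1-q}\sum_{S\in\Sch_{2n}}\wt(S)
\]
for an explicit Schröder weight $\wt$, where the prohibition of horizontal steps on the $x$-axis is automatic because the level-step coefficient $c_0$ vanishes in Prodinger's fraction.

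The final step is a weight-preserving transformation $\Sch_{2n}\to\Dyck_{2n}$ that converts this Schröder weight into the target Dyck weight. I would replace each horizontal step at height $b$ by an up step followed by a down step, creating a peak at height $b+1$; since $S$ has no horizontal step at height $0$, every created peak has height $b+1\ge2$ and hence lies in $\HP(P)$, which matches exactly the definition of $H(P)$. Grouping the weighted Schröder paths by their underlying Dyck path, the level weight $c_b$ should split as the high-peak contribution $q^{2(b+1)+1}$ times the point factor $\frac{1}{1-q^{2(b+1)+1}}$ of the newly created peak, while the remaining up/down weights $u_j,d_j$ redistribute onto the lattice points—exactly as in the ``weight on each point versus weight on each step'' passage of Proposition~\ref{prop:qEuler}—to reassemble $\prod_{(a,b)\in P}\frac{1}{1-q^{2b+1}}$ and absorb the prefactor $\frac{1}{1-q}$. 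Comparing coefficients of $x^{2n+1}$ then yields the claimed identity.

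The main obstacle I anticipate is the bookkeeping in this last step: checking that Prodinger's continued-fraction coefficients really do factor into the high-peak part and the per-point part with the correct exponents $2b+1$, and that the passage from step weights to point weights is consistent at the boundary height $b=0$ and at valleys. Aligning the exponent shift $b\mapsto b+1$ induced by the horizontal-step-to-peak replacement with the $2b+1$ occurring in both $H(P)$ and the point product is the delicate point.
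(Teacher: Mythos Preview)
Your plan is essentially the paper's own proof: obtain a Schr\"oder-type continued fraction, interpret it via Flajolet's theory as a generating function for weighted little Schr\"oder paths, and then merge the horizontal-step weight with the up--down-pair weight at each level to pass to Dyck paths carrying the high-peak weight. One small caveat: the Schr\"oder-type continued fraction with the level coefficients $c_i$ is not stated directly in \cite{Prodinger2000,Prodinger2008}; the paper derives it from Prodinger's ratio-of-series formula \cite[Theorem~2.2]{Prodinger2000} by iterating Euler's identity \eqref{eqn:cf_exp}, so that derivation would replace your citation step.
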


\begin{proof}
By Prodinger's result \cite[Theorem 2.2]{Prodinger2000}, we have 
$$
\sum_{n=0}^\infty\frac{E_{2n+1}^{\ast}(q)x^{2n+1}}{(q;q)_{2n+1}}= \sum_{n=0}^\infty\frac{(-1)^n q^{n(n+1)}x^{2n+1}}{(q;q)_{2n+1}}\left/ \sum_{n=0}^\infty\frac{(-1)^n q^{n(n-1)}x^{2n}}{(q;q)_{2n}}\right. .
$$
Let us denote the right hand side by 
\begin{align*}
&  \sum_{n=0}^\infty\frac{(-1)^n q^{n(n+1)}x^{2n+1}}{(q;q)_{2n+1}}\left/ \sum_{n=0}^\infty\frac{(-1)^n q^{n(n-1)}x^{2n}}{(q;q)_{2n}}\right. \\
&= \frac{x}{1-q}\cdot \frac{1}{  \displaystyle \sum_{n=0}^\infty\dfrac{(-1)^n q^{n(n-1)}x^{2n}}{(q;q)_{2n}}\left/\sum_{n=0}^\infty\frac{(-1)^n q^{n(n+1)}x^{2n}}{(q^2;q)_{2n}}\right. .}
\end{align*}
We use Euler's approach \cite{Euler1788} of using Euclid's algorithm to obtain a continued fraction expansion, namely, we apply
\begin{equation}\label{eqn:cf_exp}
\frac{N}{D}=1+a +\frac{N-(1+a)D}{D}
\end{equation}
iteratively (cf. \cite{Bhatnagar}. Note that Bhatnagar used this method to prove number of continued fractions due to Ramanujan).
By applying it once with $a=0$, we have 
\begin{align*}
\dfrac{\displaystyle   \sum_{n=0}^\infty\dfrac{(-1)^n q^{n(n-1)}x^{2n}}{(q;q)_{2n}}}{\displaystyle \sum_{n=0}^\infty\dfrac{(-1)^n q^{n(n+1)}x^{2n}}{(q^2;q)_{2n}}}
&= 1+\frac{ \displaystyle \sum_{n=0}^\infty\dfrac{(-1)^n q^{n(n-1)}x^{2n}}{(q;q)_{2n}}-\sum_{n=0}^\infty\dfrac{(-1)^n q^{n(n+1)}x^{2n}}{(q^2;q)_{2n}}}{\displaystyle \sum_{n=0}^\infty\dfrac{(-1)^n q^{n(n+1)}x^{2n}}{(q^2;q)_{2n}}}\\
&= 1+\frac{\displaystyle  \sum_{n=0}^\infty\dfrac{(-1)^n q^{n(n-1)}(1-q^{2n})x^{2n}}{(q;q)_{2n+1}}}{\displaystyle  \sum_{n=0}^\infty\dfrac{(-1)^n q^{n(n+1)}x^{2n}}{(q^2;q)_{2n}}}\\
&= 1-\frac{\dfrac{x^2}{(1-q)(1-q^3)}}{\displaystyle  \sum_{n=0}^\infty\dfrac{(-1)^n q^{n(n+1)}x^{2n}}{(q^2;q)_{2n}}\left/  \sum_{n=0}^\infty\dfrac{(-1)^n q^{n(n+1)}(1-q^{2n+2})x^{2n}}{(1-q^2)(q^4;q)_{2n}}\right.}.
\end{align*}
After applying \eqref{eqn:cf_exp} $k-1$ times (with appropriate $a$'s), we would have 
$$\sum_{n=0}^\infty\frac{(-1)^n q^{n(n+1)}(q^{2n+2};q^2)_{k-1}x^{2n}}{(q^{2};q^2)_{k-1}(q^{2k};q)_{2n}}
\left/  \sum_{n=0}^\infty\frac{(-1)^n q^{n(n+1)}(q^{2n+2};q^2)_{k}x^{2n}}{(q^2;q^2)_{k}(q^{2k+2};q)_{2n}}
\right.
$$ 
in the last denominator. If we apply \eqref{eqn:cf_exp} one more time with $a=\frac{x^2}{1-q^{2k+1}}$, then we get 
\begin{align*}
&\frac{\displaystyle \sum_{n=0}^\infty\dfrac{(-1)^n q^{n(n+1)}(q^{2n+2};q^2)_{k-1}x^{2n}}{(q^2;q^2)_{k-1}(q^{2k};q)_{2n}}}{\displaystyle \sum_{n=0}^\infty\dfrac{(-1)^n q^{n(n+1)}(q^{2n+2};q^2)_{k}x^{2n}}{(q^2;q^2)_{k}(q^{2k+2};q)_{2n}}}\\
&= 1+\frac{x^2}{1-q^{2k+1}} +\frac{\displaystyle \sum_{n=1}^\infty\dfrac{(-1)^n q^{n(n-1)}(q^{2n};q^2)_{k+1}x^{2n}}{(q^2;q^2)_{k-1}(q^{2k};q)_{2n+2}}}{\displaystyle \sum_{n=0}^\infty\dfrac{(-1)^n q^{n(n+1)}(q^{2n+2};q^2)_{k}x^{2n}}{(q^2;q^2)_{k}(q^{2k+2};q)_{2n}}}\\
&= 1+\dfrac{x^2}{1-q^{2k+1}}-\frac{\dfrac{x^2}{(1-q^{2k+1})(1-q^{2k+3})}}{\displaystyle \sum_{n=0}^\infty\dfrac{(-1)^n q^{n(n+1)}(q^{2n+2};q^2)_{k}x^{2n}}{(q^2;q^2)_{k}(q^{2k+2};q)_{2n}}\left/ \sum_{n=0}^\infty\dfrac{(-1)^n q^{n(n+1)}(q^{2n+2};q^2)_{k+1}x^{2n}}{(q^2;q^2)_{k+1}(q^{2k+4};q)_{2n}}\right.}.
\end{align*}

Eventually we obtain 
\begin{equation}\label{eqn:cf_E*}
\sum_{n=0}^\infty\frac{E_{2n+1}^{\ast}(q)x^{2n+1}}{(q;q)_{2n+1}} = \frac{x}{1-q} \cdot \cfrac{1}{1-\cfrac{\dfrac{x^2}{(1-q)(1-q^3)}}{1+\dfrac{x^2}{1-q^3}-\cfrac{\dfrac{x^2}{(1-q^3)(1-q^5)}}{1+\dfrac{x^2}{1-q^5}-\cfrac{\dfrac{x^2}{(1-q^5)(1-q^7)}}{\cdots}}}}.
\end{equation}
Similarly to \eqref{eq:flajolet}, the right hand side of \eqref{eqn:cf_E*} can be interpreted as a generating function of weighted little Schr\"{o}der paths with the following weights on each step :
$$\begin{cases}
a_i = 1 &\text{ for the up steps going from the level $y=i$ to the level $y=i+1$}\\
c_i = \frac{-1}{1-q^{2i+1}} & \text{ for the horizontal steps on the level $y=i$}\\
b_ i = \frac{1}{(1-q^{2i-1})(1-q^{2i+1})}& \text{ for the down steps going from the level $y=i$ to the level $y=i-1$}.
\end{cases}$$
Note that $c_0 = 0$, since we consider little Schr\"{o}der paths. To change the weighted little Schr\"{o}der paths to weighted Dyck paths, 
we combine the weights of a horizontal step and a pair of up and down steps, and define it as a weight of a pair of up and down steps.
In other words, we add the weights defined on the steps 
$$\includegraphics{./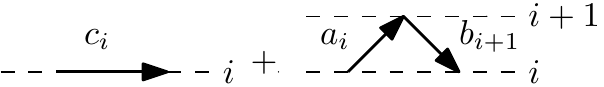}$$
 which gives 
$$\frac{-1}{1-q^{2i+1}}+\frac{1}{(1-q^{2i+1})(1-q^{2i+3})} = \frac{q^{2i+3}}{(1-q^{2i+1})(1-q^{2i+3})}$$
and we define it as a weight defined on a pair of up and down steps
$$\includegraphics{./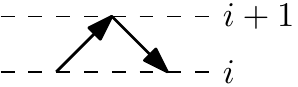}.$$
 This procedure would change the weights defined on little Schr\"{o}der paths to weights on Dyck path, keeping the weights 
 on the up and down steps. Then, considering the extra factor of $1/(1-q)$ in front of the right hand side of \eqref{eqn:cf_E*},
 it is not hard to see that the weights defined on the Dyck path are consistent with the weights used in the right hand side of 
  \eqref{eq:MPP_Euler*}.
\end{proof}

We remark that by applying Flajolet's theory, we can derive the continued fraction expansion which gives the weighted Dyck paths generating function :
$$
\sum_{n=0}^\infty\frac{E_{2n+1}^{\ast}(q)x^{2n+1}}{(q;q)_{2n+1}}=\frac{x}{1-q}\cdot 
\cfrac{1}{1-\cfrac{w_0 x^2}{1-\cfrac{w_1 x^2}{1-\cfrac{w_2 x^2}{1-\cdots}}}},
$$
where 
\begin{equation}\label{eqn:w_i}
w_i =\begin{cases}
\dfrac{q^i}{(1-q^{2i+1})(1-q^{2i+3})}, \text{ if  $i$ is even,} \\
\dfrac{q^{3i+2}}{(1-q^{2i+1})(1-q^{2i+3})}, \text{ if $i$ is odd}.
\end{cases}
\end{equation}
Note that this continued fraction expansion was conjectured in \cite[Conjecture 4.5]{Prodinger2000} (and proved later in \cite{Fulmek2000,Prodinger2008}),
but it can be easily obtained by applying Euler's approach with using the identity
$$\frac{N}{D}=1+\frac{N - D}{D}$$
iteratively.
Then, \eqref{eq:flajolet} implies that 
$$\sum_{n\ge 0}E_{2n+1}^{\ast}(q)\frac{x^{2n+1}}{(q;q)_{2n+1}}= \frac{x}{1-q}\sum_{n\ge 0}\left(\sum_{P\in\Dyck_{2n}}\wt_w(P) \right)x^{2n},$$
where $w=(w_0,w_1,\dots)$ is the sequence defined in \eqref{eqn:w_i}.

\begin{cor}
We have
$$
\sum_{P\in \Dyck_{2n}}q^{H(P)}\prod_{(a,b)\in P}\frac{1}{1-q^{2b+1}}= \frac{1}{1-q}\sum_{P\in\Dyck_{2n}}\wt_w(P),
$$
where $w=(w_0,w_1,\dots)$ is the sequence defined in \eqref{eqn:w_i}.
\end{cor}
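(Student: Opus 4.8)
The plan is to recognize that both sides of the asserted identity are two different weighted-lattice-path expressions for the same generating function $\frac{E^*_{2n+1}(q)}{(q;q)_{2n+1}}$, so that the corollary follows simply by equating them. No new computation is needed beyond assembling the two preceding results.

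First I would recall that the left-hand side is, by the Proposition immediately above (Corollary~1.8 of \cite{MPP2}), exactly equal to $\frac{E^*_{2n+1}(q)}{(q;q)_{2n+1}}$. This identifies the left-hand side with the target quantity and requires nothing further.

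Next I would treat the right-hand side using the continued fraction expansion recorded in the remark preceding the corollary. With the weights $w=(w_0,w_1,\dots)$ defined in \eqref{eqn:w_i}, Flajolet's theorem \eqref{eq:flajolet} yields
$$
\sum_{n\ge 0}E_{2n+1}^{\ast}(q)\frac{x^{2n+1}}{(q;q)_{2n+1}}
=\frac{x}{1-q}\sum_{n\ge 0}\left(\sum_{P\in\Dyck_{2n}}\wt_w(P)\right)x^{2n}.
$$
Extracting the coefficient of $x^{2n+1}$ on both sides, where the prefactor $x/(1-q)$ accounts for a degree shift of one, gives
$$
\frac{E_{2n+1}^{\ast}(q)}{(q;q)_{2n+1}}=\frac{1}{1-q}\sum_{P\in\Dyck_{2n}}\wt_w(P),
$$
which is precisely the right-hand side of the corollary.

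Equating these two expressions for $\frac{E^*_{2n+1}(q)}{(q;q)_{2n+1}}$ completes the argument. There is essentially no obstacle: the entire content is already contained in the Proposition and the remark, and the only point demanding care is the bookkeeping of the degree shift induced by the factor $x/(1-q)$ when one compares coefficients of $x^{2n+1}$ rather than of $x^{2n}$.
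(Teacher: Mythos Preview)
Your proposal is correct and matches the paper's approach exactly: the corollary is stated without proof in the paper because it follows immediately from equating the two expressions for $\frac{E^*_{2n+1}(q)}{(q;q)_{2n+1}}$ obtained in the preceding Proposition and the remark on the continued fraction expansion with weights \eqref{eqn:w_i}. Your bookkeeping of the degree shift from the prefactor $x/(1-q)$ is precisely what is needed.
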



\subsection{Other continued fractions}


In this subsection we find continued fraction expressions for various $q$-tangent numbers. 
For each row $\tau^{\alpha\beta}_{2n+1}$, $\frac{(A,B)}{(C,D)}$ and $w_i$ in  Table \ref{tab:cfs}, we assume the following form of continued fraction expansion 
$$
\sum_{n=0}^\infty\frac{\tau^{\alpha\beta}_{2n+1} (q)}{(1-q)^{2n+1}} \cdot \frac{x^{2n+1}}{(q;q)_{2n+1}}=\frac{\displaystyle\sum_{n=0}^{\infty}\dfrac{(-1)^n q^{A n^2 +Bn}x^{2n+1}}{(q;q)_{2n+1}}}{\displaystyle\sum_{n=0}^{\infty}\dfrac{(-1)^n q^{Cn^2+Dn}x^{2n}}{(q;q)_{2n}}}=\frac{x}{1-q}\cdot 
\cfrac{1}{1-\cfrac{w_0 x^2}{1-\cfrac{w_1 x^2}{1-\cfrac{w_2 x^2}{1-\cdots}}}}.
$$
The proof for the continued fraction expansion can be done by applying Euler's approach of using Euclid's algorithm and we omit the details.

\begin{table}[h]
\renewcommand{\arraystretch}{2}
\begin{tabular}{ccc}
\hline
$\tau^{\alpha\beta}_{2n+1} (q)$& $\frac{(A,B)}{(C,D)}$& $w_i$\\
\hline
$\tau^{\ge<}_{2n+1}(q)$ \text{ or } $\tau^{\le>}_{2n+1}(q)$ & $\frac{(0,0)}{(0,0)}$ \text{ or }  $\frac{(2,1)}{(2,-1)}$ &  $\dfrac{q^{2i+1}}{(1-q^{2i+1})(1-q^{2i+3})}$\vspace{2mm}\\
$\tau^{\ge\le}_{2n+1}(q)$ & $\frac{(1,1)}{(1,-1)}$ & $\begin{cases}
\dfrac{q^i}{(1-q^{2i+1})(1-q^{2i+3})}, \text{ if  $i$ is even,}  \vspace{2mm}\\
\dfrac{q^{3i+2}}{(1-q^{2i+1})(1-q^{2i+3})}, \text{ if $i$ is odd}.
\end{cases}$\\
$\tau^{<>}_{2n+1}(q)$ & $\frac{(1,1)}{(1,0)}$ & $-$\\
$\tau^{><}_{2n+1}(q)$ & $\frac{(1,0)}{(1,0)}$ & $\begin{cases}
\dfrac{q^{3i+2}}{(1-q^{2i+1})(1-q^{2i+3})}, \text{ if  $i$ is even,} \vspace{2mm}\\
\dfrac{q^{i}}{(1-q^{2i+1})(1-q^{2i+3})}, \text{ if $i$ is odd}.
\end{cases}$ \vspace{2mm}\\
$-$ & $\frac{(0,1)}{(0,1)}$ &  $\dfrac{q^{2i+2}}{(1-q^{2i+1})(1-q^{2i+3})}$ \vspace{2mm}\\
$-$ & $\frac{(2,0)}{(2,-2)}$ & $\dfrac{q^{2i}}{(1-q^{2i+1})(1-q^{2i+3})}$\\
$\tau^{\le\ge}_{2n+1}(q)$ & $\frac{(1,0)}{(1,-1)}$ & $-$\\
\hline
\end{tabular}
\caption{Continued fraction expressions for the generating functions of various $q$-tangent numbers.}\label{tab:cfs}
\end{table}

\section{Prodinger's $q$-Euler numbers and Foata-type bijections}
\label{sec:foata}

Recall from the previous section that we have
\begin{equation}
  \label{eq:1}
\frac{\tau^{\ge<}_{2n+1} (q)}{(1-q)^{2n+1}}    = \sum_{\pi\in\SSYT(\delta_{n+2}/\delta_{n})}q^{|\pi|}  = 
\frac{\sum_{\pi\in\Alt_{2n+1}} q^{\maj(\pi^{-1})} }{(q;q)_{2n+1}}
\end{equation}
and
\begin{equation}
  \label{eq:2}
\frac{\tau^{\ge \le}_{2n+1}(q)}{(1-q)^{2n+1}} = \sum_{\pi\in\RPP(\delta_{n+2}/\delta_{n})}q^{|\pi|}  = 
\frac{\sum_{\pi\in\Alt_{2n+1}} q^{\maj(\kappa_{2n+1}\pi^{-1})}}{(q;q)_{2n+1}}.
\end{equation}
Using recurrence relations and generating functions, Huber and Yee \cite{Huber2010} showed that
\begin{equation}
  \label{eq:3}
\frac{\tau^{\ge<}_{2n+1} (q)}{(1-q)^{2n+1}}  =
\frac{\sum_{\pi\in\Alt_{2n+1}} q^{\inv(\pi^{-1})} }{(q;q)_{2n+1}}
\end{equation}
and
\begin{equation}
  \label{eq:4}
\frac{\tau^{\ge\le}_{2n+1} (q)}{(1-q)^{2n+1}}  =
\frac{\sum_{\pi\in\Alt_{2n+1}} q^{\inv(\pi^{-1})-\ndes(\pi_e)} }{(q;q)_{2n+1}}.
\end{equation}
Observe that \eqref{eq:2} and \eqref{eq:4} imply
\begin{equation}
  \label{eq:5}
\sum_{\pi\in\Alt_{2n+1}} q^{\maj(\kappa_{2n+1}\pi^{-1})}
=\sum_{\pi\in\Alt_{2n+1}} q^{\inv(\pi^{-1})-\ndes(\pi_e)}.
\end{equation}

In Section~\ref{sec:prodingers-q-euler} we explore various versions of $q$-Euler numbers considered by Prodinger \cite{Prodinger2000} 
and show that they have similar identities as \eqref{eq:1}, \eqref{eq:2}, \eqref{eq:3} and \eqref{eq:4}. In Section~\ref{sec:foata-type-bijection} we give a bijective proof of \eqref{eq:5}
by finding a Foata-type bijection. In Section~\ref{sec:more_foata} we give various Foata-type bijections which imply similar identities as \eqref{eq:5}. 

\subsection{Prodinger's $q$-Euler numbers}
\label{sec:prodingers-q-euler}

 Prodinger \cite{Prodinger2000} showed that the generating function for $\tau^{\alpha\beta}_{n} (q)$ for any choice of alternating inequalities $\alpha$ and $\beta$, i.e.,
\[
(\alpha,\beta)\in \{(\ge,\le)\, (\ge,<),(>,\le),(>,<),(\le,\ge),(\le,>),(<,\ge),(<,>)\},
\]
has a nice expression as a quotient of series. Observe that, by reversing a word of length $2n+1$, we have $\tau_{2n+1}^{\ge<}(q) = \tau_{2n+1}^{>\le}(q)$ and
$\tau_{2n+1}^{\le>}(q) = \tau_{2n+1}^{<\ge}(q)$. Also, by reversing a word of length $2n$, we have
$\tau_{2n}^{\ge\le}(q) = \tau_{2n}^{\le\ge}(q)$,
$\tau_{2n}^{\ge<}(q) = \tau_{2n}^{\le>}(q)$,
$\tau_{2n}^{>\le}(q) = \tau_{2n}^{<\ge}(q)$ and
$\tau_{2n}^{><}(q) = \tau_{2n}^{<>}(q)$.
Therefore, we only need to consider  $6$ $q$-tangent numbers $\tau^{\alpha\beta}_{2n+1}$ and $4$ $q$-secant numbers $\tau^{\alpha\beta}_{2n}$. 

The following lemma is straightforward to check.
\begin{lem}\label{lem:reverse_complement}
For $\pi=\pi_1\pi_2\dots\pi_n\in \Sym_n$, let 
\[
\phi(\pi) = (n+1-\pi_n)(n+1-\pi_{n-1})\cdots(n+1-\pi_1).
\]
Then the map $\phi$ induces a bijection $\phi:\Alt_{2n+1}\to\Ralt_{2n+1}$ 
and a bijection $\phi:\Alt_{2n}\to\Alt_{2n}$. Moreover, for $\pi\in\Alt_{2n+1}$ and $\sigma\in\Alt_{2n}$, we have
\[
(\inv(\phi(\pi)),\des(\phi(\pi)_o),\des(\phi(\pi)_e)) = 
(\inv(\pi),\des(\pi_o),\des(\pi_e))
\]
and
\[
(\inv(\phi(\sigma)),\des(\phi(\sigma)_o)) = (\inv(\sigma),\des(\sigma_e)).
\]
\end{lem}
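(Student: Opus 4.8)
The plan is to realize $\phi$ as the composition of the two elementary operations of \emph{reversal} $r(\pi)=\pi_n\pi_{n-1}\cdots\pi_1$ and \emph{complementation} $c(\pi)=(n+1-\pi_1)\cdots(n+1-\pi_n)$, so that $\phi=c\circ r=r\circ c$. Since reversal and complementation are each involutions that commute, $\phi$ is an involution on $\Sym_n$; in particular it is a bijection, and all the bijection claims will follow once I check that $\phi$ sends the relevant descent sets to one another. The key elementary observation is that, writing $\phi(\pi)_i=n+1-\pi_{n+1-i}$, one has $i\in\Des(\phi(\pi))$ if and only if $n-i\in\Des(\pi)$: reversal swaps ascents and descents while reversing positions, and complementation swaps them back while fixing positions, so $\phi$ preserves the descent/ascent \emph{type} but reflects the position.

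First I would settle the two bijection statements. For $\pi\in\Alt_{2n+1}$ we have $\Des(\pi)=\{2,4,\dots,2n\}$, and the reflection rule gives $\Des(\phi(\pi))=\{1,3,\dots,2n-1\}$, which is exactly the descent set characterizing $\Ralt_{2n+1}$; running the same argument backwards shows $\phi$ carries $\Ralt_{2n+1}$ into $\Alt_{2n+1}$, so the involution $\phi$ restricts to a bijection $\Alt_{2n+1}\to\Ralt_{2n+1}$. For $\sigma\in\Alt_{2n}$ the set $\Des(\sigma)=\{2,4,\dots,2n-2\}$ is reflected to itself, so $\phi$ restricts to an involutive bijection $\Alt_{2n}\to\Alt_{2n}$.

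Next I would handle the inversion statistic. Both $r$ and $c$ complement the inversion count, i.e.\ $\inv(r(\pi))=\inv(c(\pi))=\binom n2-\inv(\pi)$, since each sends every pair of positions to the opposite relation. Composing the two cancels the complementation, giving $\inv(\phi(\pi))=\inv(\pi)$ on all of $\Sym_n$; this disposes of the first coordinate in both displayed triples (with ambient sizes $2n+1$ and $2n$ respectively).

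The remaining, and most delicate, point is the behavior of the subsequence descent numbers, where the bookkeeping of how the index reflection $i\mapsto n+1-i$ interacts with the parity of positions is the crux. For odd length $2n+1$ I would compute directly that $\phi(\pi)_{2k-1}=2n+2-\pi_{2n+3-2k}$ ranges over the odd positions of $\pi$ and $\phi(\pi)_{2k}=2n+2-\pi_{2n+2-2k}$ over the even positions, so that $\phi(\pi)_o$ is precisely the reverse-complement of $\pi_o$ and $\phi(\pi)_e$ the reverse-complement of $\pi_e$; since reverse-complementation of a word with distinct entries reflects descent positions without changing their number, $\des(\phi(\pi)_o)=\des(\pi_o)$ and $\des(\phi(\pi)_e)=\des(\pi_e)$, completing the odd case. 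The genuinely different feature appears for even length $2n$: here $i\mapsto 2n+1-i$ sends odd positions to even ones, so $\phi(\sigma)_{2k-1}=2n+1-\sigma_{2n+2-2k}$ ranges over the \emph{even} positions of $\sigma$, making $\phi(\sigma)_o$ the reverse-complement of $\sigma_e$ rather than of $\sigma_o$. This parity swap is exactly what produces the asymmetric right-hand side $\des(\sigma_e)$ in the lemma, and once it is identified the equality $\des(\phi(\sigma)_o)=\des(\sigma_e)$ again follows from the descent-count invariance of reverse-complementation.
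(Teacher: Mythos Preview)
Your proof is correct and is precisely the elementary verification the paper has in mind; the paper in fact omits the proof entirely, stating only that the lemma ``is straightforward to check.'' Your decomposition $\phi=c\circ r$, the reflection rule $i\in\Des(\phi(\pi))\Leftrightarrow n-i\in\Des(\pi)$, and the parity bookkeeping (odd positions map to odd positions when $n$ is odd, and to even positions when $n$ is even) are exactly the computations one performs to verify the lemma, and each step is accurate.
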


Now we state a unifying theorem for Prodinger's $q$-tangent numbers. 

\begin{thm}\label{thm:q-tan}
For each row $\tau_{2n+1}^{\alpha\beta}(q), \TAB, M, I, (A,B)/(C,D)$ in Table~\ref{tab:q-tan}, we have
\[
f_{2n+1}:=\frac{\tau_{2n+1}^{\alpha\beta}(q)}{(1-q)^{2n+1}}=\sum_{\pi\in \TAB}q^{|\pi|} = \frac{M}{(q;q)_{2n+1}} = \frac{I}{(q;q)_{2n+1}} ,
\]
whose generating function is
\[
\sum_{n\ge0} f_{2n+1} x^{2n+1} = 
\frac{\sum_{n\ge0} (-1)^nq^{An^2+Bn}x^{2n+1}/(q;q)_{2n+1}}
{\sum_{n\ge0}(-1)^nq^{Cn^2+Dn}x^{2n}/(q;q)_{2n}}.
\]
\end{thm}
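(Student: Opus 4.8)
The plan is to prove the asserted chain of four equalities one link at a time, treating Prodinger's quotient-of-series expansion as the backbone that pins down the common value, and then to read off the entries of Table~\ref{tab:q-tan} row by row. Concretely, for each admissible inequality pattern $\alpha\beta$ I will verify
\[
\frac{\tau_{2n+1}^{\alpha\beta}(q)}{(1-q)^{2n+1}} \overset{(a)}{=} \sum_{\pi\in\TAB}q^{|\pi|} \overset{(b)}{=} \frac{M}{(q;q)_{2n+1}} \overset{(c)}{=} \frac{I}{(q;q)_{2n+1}},
\]
and separately that the generating function of $f_{2n+1}$ is Prodinger's quotient determined by the parameters $(A,B)$ in the numerator and $(C,D)$ in the denominator, which is \cite[Theorem~2.2]{Prodinger2000} specialized to each pattern.

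For link $(a)$ I would argue exactly as in the derivations of \eqref{eq:SSYT->tau}, \eqref{eq:RPP->tau} and \eqref{eq:ST->tau}: the probability $\tau_{2n+1}^{\alpha\beta}(q)$ that a geometrically distributed random word obeys the pattern $\alpha\beta$, after dividing by the per-letter normalization $(1-q)^{2n+1}$, is precisely the generating function for fillings of the zigzag strip $\delta_{n+2}/\delta_n$ whose rows and columns obey the corresponding weak or strict monotonicity. The content here is bookkeeping: I match each of the six surviving patterns — after the reversal identifications recorded just before Lemma~\ref{lem:reverse_complement} collapse the eight into six — to the tableau class $\TAB$ (SSYT, RPP, ST, or one of the intermediate mixed-monotonicity classes) named in the corresponding row. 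For link $(b)$ I would realize $\TAB$ as the set of $(P_\lm,\omega)$-partitions for the labeling $\omega$ forcing exactly that monotonicity and invoke the $(P,\omega)$-partition generating function \eqref{eq:lin_ext}, which rewrites $\sum_{\pi\in\TAB}q^{|\pi|}$ as $\frac{1}{(q;q)_{2n+1}}\sum_{\sigma\in\LL(P_\lm,\omega)}q^{\maj(\sigma)}$. It then remains to identify the Jordan--H\"older set $\LL(P_\lm,\omega)$ with $\Alt_{2n+1}$ (possibly precomposed with $\kappa_{2n+1}$, as in \eqref{eq:RPP->Alt}); the resulting $\maj$-statistic is the entry $M$. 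Lemma~\ref{lem:reverse_complement} lets me transport these identifications across the reversal-related patterns without redoing the labeling.

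The genuinely substantive link is $(c)$, the equality $M=I$ of a $\maj$-type generating function with an $\inv$-type one. At this stage I would not attempt a direct bijection; instead I would show that $I/(q;q)_{2n+1}$ has the same generating function as the left-hand side, namely Prodinger's quotient with parameters $(C,D)$ in the denominator and $(A,B)$ in the numerator. Since links $(a)$ and $(b)$ already give $M/(q;q)_{2n+1}=\tau_{2n+1}^{\alpha\beta}(q)/(1-q)^{2n+1}$ and Prodinger's expansion is exactly this quotient, comparing coefficients of $x^{2n+1}$ forces $M=I$. For the two patterns treated by Huber and Yee this simply recovers \eqref{eq:3}--\eqref{eq:4}, and the remaining patterns are handled by the same recurrence-and-generating-function mechanism.

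I expect the main obstacle to be precisely $(c)$: verifying that each proposed $\inv$-statistic $I$ — together with its pattern-dependent correction, such as the $\ndes(\pi_e)$ term appearing in \eqref{eq:4} — actually reproduces Prodinger's numerator series, and not merely a series agreeing in low order. This demands a careful case-by-case analysis of how the alternating condition and the reversal symmetries of Lemma~\ref{lem:reverse_complement} interact with the correction terms, and it is exactly this identity whose bijective explanation, in the shape of \eqref{eq:5}, is deferred to the Foata-type construction of the next subsection.
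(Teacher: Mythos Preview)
Your proposal is essentially the same strategy as the paper's proof: establish $(a)$ from the definitions, $(b)$ from $(P,\omega)$-partition theory, cite Prodinger for the quotient-of-series expansion of $\tau_{2n+1}^{\alpha\beta}(q)$, and obtain $M=I$ by matching both to that quotient via Huber--Yee's generating-function computations, with Lemma~\ref{lem:reverse_complement} handling the $\Alt$/$\Ralt$ symmetry. Two small corrections: first, the Jordan--H\"older sets in link $(b)$ involve not only $\kappa_{2n+1}$ but also $\eta_{2n+1}$, and $\Ralt_{2n+1}$ appears alongside $\Alt_{2n+1}$ (see Table~\ref{tab:q-tan}); second, Huber and Yee already cover all six $\inv$-type statistics $I$ in Table~\ref{tab:q-tan}, not just the two you single out, so no additional ``recurrence-and-generating-function mechanism'' is needed here---the paper simply cites them, and Lemma~\ref{lem:reverse_complement} is used only to pass from $\Alt_{2n+1}$ to $\Ralt_{2n+1}$ in the $I$ column.
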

\begin{proof}
This is obtained by combining known results. The connection between $\tau^{\alpha\beta}_{2n+1}(q)$ and $\TAB$
is obvious from their definitions. The connection between $\TAB$ and $M$ follows from the $P$-partition theory \eqref{eq:lin_ext}. 
The connection between $\tau^{\alpha\beta}_{2n+1}(q)$ and $(A,B)/(C,D)$ is due to Prodinger \cite{Prodinger2000}. The connection between $I$ and $(A,B)/(C,D)$ is due to Huber and Yee \cite{Huber2010}. See Figure~\ref{fig:diagram} for an illustration of these connections. 
We note that Huber and Yee \cite{Huber2010} only considered $\Alt_{2n+1}$ for $I$, but Lemma~\ref{lem:reverse_complement} implies that $\Alt_{2n+1}$ can be replaced by $\Ralt_{2n+1}$. 
\end{proof}

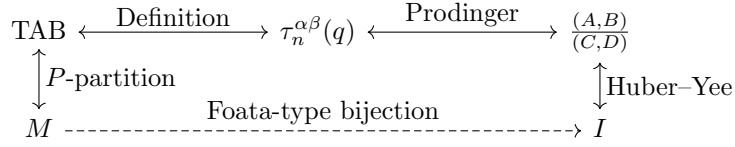
\begin{figure}
\[
\begin{tikzcd}
\tikzcdset{row sep=10cm}
\TAB \arrow[leftrightarrow]{rrr}{\mbox{Definition}} \arrow[leftrightarrow]{d}{\mbox{$\displaystyle P$-partition}}
&&& \tau^{\alpha\beta}_n(q) \arrow[leftrightarrow]{rrr}{\mbox{Prodinger}} &&& \frac{(A,B)}{(C,D)} \arrow[leftrightarrow]{d}{\mbox{Huber--Yee}} \\
M \arrow[dashrightarrow]{rrrrrr}{\mbox{Foata-type bijection}} &&&&&& I
\end{tikzcd}
\]
  \caption{The connections in Theorems~\ref{thm:q-tan} and \ref{thm:q-sec}.}
  \label{fig:diagram}
\end{figure}

\begin{table}
\renewcommand{\arraystretch}{2}
  \centering
  \begin{tabular}{c l l l c}
\hline
$\tau_{2n+1}^{\alpha\beta}(q)$ & \qquad$\TAB$ & \qquad\qquad$M$ & \qquad\qquad$I$ & $\frac{(A,B)}{(C,D)}$\\ \hline
$\tau^{\ge<}_{2n+1}(q)$ & $\SSYT(\delta_{n+2}/\delta_n)$ & $\displaystyle\sum_{\pi\in\Alt_{2n+1}}q^{\maj(\pi^{-1})}$ &
$\displaystyle \sum_{\pi\in\Alt^*_{2n+1}}q^{\inv(\pi)}$ & $\frac{(0,0)}{(0,0)}$ \\
$\tau^{\ge\le}_{2n+1}(q)$ & $\RPP(\delta_{n+2}/\delta_n)$ & $\displaystyle\sum_{\pi\in\Alt_{2n+1}}q^{\maj(\kappa_{2n+1}\pi^{-1})}$
 & $\displaystyle \sum_{\pi\in\Alt^*_{2n+1}}q^{\inv(\pi)-\ndes(\pi_e)}$ & $\frac{(1,1)}{(1,-1)}$ \\
$\tau^{><}_{2n+1}(q)$ & $\ST(\delta_{n+2}/\delta_n)$ & $\displaystyle\sum_{\pi\in\Alt_{2n+1}}q^{\maj(\eta_{2n+1}\pi^{-1})}$
 & $\displaystyle \sum_{\pi\in\Alt^*_{2n+1}}q^{\inv(\pi)+\nasc(\pi_e)}$ & $\frac{(1,0)}{(1,0)}$ \\
$\tau^{<\ge}_{2n+1}(q)$ & $\SSYT(\delta_{n+3}^{(1,1)}/\delta_{n+1})$ & $\displaystyle\sum_{\pi\in\Ralt_{2n+1}}q^{\maj(\pi^{-1})}$&
$\displaystyle \sum_{\pi\in\Alt^*_{2n+1}}q^{\inv(\pi)}$ & $\frac{(0,0)}{(0,0)}$ \\
$\tau^{\le\ge}_{2n+1}(q)$ & $\RPP(\delta_{n+3}^{(1,1)}/\delta_{n+1})$ & $\displaystyle\sum_{\pi\in\Ralt_{2n+1}}q^{\maj(\eta_{2n+1}\pi^{-1})}$
 & $\displaystyle \sum_{\pi\in\Alt^*_{2n+1}}q^{\inv(\pi)-\asc(\pi_o)}$ & $\frac{(1,0)}{(1,-1)}$ \\
$\tau^{<>}_{2n+1}(q)$ & $\ST(\delta_{n+3}^{(1,1)}/\delta_{n+1})$ & $\displaystyle\sum_{\pi\in\Ralt_{2n+1}}q^{\maj(\kappa_{2n+1}\pi^{-1})}$
 & $\displaystyle \sum_{\pi\in\Alt^*_{2n+1}}q^{\inv(\pi)+\des(\pi_o)}$ & $\frac{(1,1)}{(1,0)}$ \\
\hline
  \end{tabular}
  \caption{Interpretations for Prodinger's $q$-tangent numbers. The notation $\Alt^*_{2n+1}$ means it can be either $\Alt_{2n+1}$ or $\Ralt_{2n+1}$.}
  \label{tab:q-tan}
\end{table}

By the same arguments, we obtain a unifying theorem for Prodinger's $q$-secant numbers. Note that we consider $6$ $q$-secant numbers even though two pairs of them are equal because their natural combinatorial interpretations are different. 

\begin{thm}\label{thm:q-sec}
For each row $\tau_{2n}^{\alpha\beta}(q), \TAB, M, I, 1/(C,D)$ in Table~\ref{tab:q-sec}, we have
\[
f_{2n}: = \frac{\tau_{2n}^{\alpha\beta}(q)}{(1-q)^{2n}} = \sum_{\pi\in \TAB}q^{|\pi|} = \frac{M}{(q;q)_{2n}} = \frac{I}{(q;q)_{2n}},
\]
whose generating function is
\[
\sum_{n\ge0} f_{2n} x^{2n} = 
\frac{1}{\sum_{n\ge0}(-1)^nq^{Cn^2+Dn}x^{2n}/(q;q)_{2n}}.
\]
\end{thm}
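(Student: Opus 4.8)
The plan is to reproduce, in the even case, the four-way identification that established Theorem~\ref{thm:q-tan}, following the diagram in Figure~\ref{fig:diagram}. Concretely, for each row of Table~\ref{tab:q-sec} I would establish the chain
\[
\frac{\tau_{2n}^{\alpha\beta}(q)}{(1-q)^{2n}} = \sum_{\pi\in\TAB}q^{|\pi|} = \frac{M}{(q;q)_{2n}} = \frac{I}{(q;q)_{2n}}
\]
edge by edge, exactly as in the odd case. The first equality is the even analogue of \eqref{eq:SSYT->tau}, \eqref{eq:RPP->tau} and \eqref{eq:ST->tau}: it follows directly from Prodinger's probabilistic definition of $\tau_{2n}^{\alpha\beta}(q)$ once one checks that a random word $w_1\cdots w_{2n}$ obeying the prescribed alternating inequalities is the same datum as an SSYT, RPP or ST of the skew shape $\TAB$ listed in that row, with $|\pi|$ recording $\sum(w_i-1)$ and the normalization $(1-q)^{2n}$ absorbing the per-letter probabilities.

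For the second equality I would invoke the $(P,\omega)$-partition theory. Identifying $\TAB$ with $\PP(P_\lm,\omega_\lm^{\SSYT})$, $\PP(P_\lm,\omega_\lm^{\RPP})$ or $\PP(P_\lm,\omega_\lm^{\ST})$ as in \eqref{eq:SSYT->P}--\eqref{eq:ST->P}, formula \eqref{eq:lin_ext} rewrites $\sum_{\pi\in\TAB}q^{|\pi|}$ as $\frac{1}{(q;q)_{2n}}\sum_{\pi\in\LL}q^{\maj(\pi)}$. It then remains to recognize the Jordan--H\"older set $\LL$: for the relevant even shapes the linear extensions are indexed by $\Alt_{2n}$ or $\Ralt_{2n}$, and composing with the appropriate permutation ($\eta_{2n}$, $\kappa_{2n}$, or the identity) yields precisely the statistic $\maj$ appearing in the column $M$, just as \eqref{eq:SSYT->Alt} and \eqref{eq:RPP->Alt} did for $\delta_{n+2}/\delta_n$.

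The last two quantities are supplied by the literature. Prodinger's evaluation \cite{Prodinger2000} of the generating function of $\tau_{2n}^{\alpha\beta}(q)$ gives the stated reciprocal series $\left(\sum_{n\ge0}(-1)^nq^{Cn^2+Dn}x^{2n}/(q;q)_{2n}\right)^{-1}$ --- note that, unlike the tangent case, the numerator collapses to $1$ --- which pins down $\frac{\tau_{2n}^{\alpha\beta}(q)}{(1-q)^{2n}}$, hence $\frac{M}{(q;q)_{2n}}$, as its coefficient of $x^{2n}$. The equality $M=I$ then comes from Huber and Yee \cite{Huber2010}, who attach the same quotient $1/(C,D)$ to the $\inv$-type generating function $I$; Lemma~\ref{lem:reverse_complement} lets me pass freely between $\Alt_{2n}$ and $\Ralt_{2n}$ so that their formula applies verbatim to each row. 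Since every one of the four expressions is matched to the single power series $\sum_{n\ge0}f_{2n}x^{2n}$, the chain of equalities closes and the generating-function formula follows simultaneously.

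The only genuine care needed --- and the step I expect to be the main bookkeeping obstacle --- is the identification of the Jordan--H\"older set in the second equality for each of the six rows. Because two of the six $q$-secant numbers coincide as polynomials, I must verify that the listed tableaux $\TAB$ and the listed statistics really are distinct combinatorial encodings of the same series, and that the correct reversing permutation $\eta_{2n}$ or $\kappa_{2n}$ is used in each case; everything else is a direct transcription of the arguments already carried out for Theorem~\ref{thm:q-tan}.
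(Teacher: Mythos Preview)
Your outline matches the paper's strategy for the first four rows, but there is a genuine gap in how you handle the last two. You claim that ``Lemma~\ref{lem:reverse_complement} lets me pass freely between $\Alt_{2n}$ and $\Ralt_{2n}$ so that their formula applies verbatim to each row.'' It does not. For even length the reverse-complement map $\phi$ sends $\Alt_{2n}$ to $\Alt_{2n}$ (and $\Ralt_{2n}$ to $\Ralt_{2n}$); the lemma records precisely this, and its content is only that one may interchange $\pi_o$ and $\pi_e$ \emph{within} the same set, which is why Table~\ref{tab:q-sec} allows $\pi_*$ to be either. There is no cheap bijection $\Alt_{2n}\to\Ralt_{2n}$ preserving $\inv$ together with the relevant descent statistic, so you cannot transport Huber--Yee's $\Alt_{2n}$ identities to the $\Ralt_{2n}$ rows.

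The paper acknowledges exactly this point in the remark following the theorem: the expressions for $I$ in the fifth and sixth rows (the $\Ralt_{2n}$ cases with $\eta_{2n}$ and $\kappa_{2n}$) were not treated by Huber and Yee, and the equality $M=I$ for those rows is instead supplied by the modified Foata bijections of Section~\ref{sec:more_foata} (Theorem~\ref{thm:foata_mod}). To complete your argument you must either invoke those bijections or give some other direct proof that
\[
\sum_{\pi\in\Ralt_{2n}}q^{\maj(\eta_{2n}\pi^{-1})}=\sum_{\pi\in\Ralt_{2n}}q^{\inv(\pi)-\ndes(\pi_*)}
\quad\text{and}\quad
\sum_{\pi\in\Ralt_{2n}}q^{\maj(\kappa_{2n}\pi^{-1})}=\sum_{\pi\in\Ralt_{2n}}q^{\inv(\pi)+\des(\pi_*)}.
\]
Everything else in your proposal is correct and is indeed the same route the paper takes.
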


\begin{table}
\renewcommand{\arraystretch}{2}
  \centering
  \begin{tabular}{c l l l c}
\hline
$\tau_{2n}^{\alpha\beta}(q)$ & \qquad $\TAB$ & \qquad \qquad$M$ & \qquad\qquad $I$ & $\frac{1}{(C,D)}$\\ \hline
$\tau^{\ge<}_{2n}(q)$ & $\SSYT(\delta^{(0,1)}_{n+2}/\delta_n)$ & $\displaystyle\sum_{\pi\in\Alt_{2n}}q^{\maj(\pi^{-1})}$ &
$\displaystyle \sum_{\pi\in\Alt_{2n}}q^{\inv(\pi)}$ & $\frac{1}{(0,0)}$ \\
$\tau^{\ge\le}_{2n}(q)$ & $\RPP(\delta^{(0,1)}_{n+2}/\delta_n)$ & $\displaystyle\sum_{\pi\in\Alt_{2n}}q^{\maj(\kappa_{2n}\pi^{-1})}$
 & $\displaystyle \sum_{\pi\in\Alt_{2n}}q^{\inv(\pi)-\asc(\pi_*)}$ & $\frac{1}{(1,-1)}$ \\
$\tau^{><}_{2n}(q)$ & $\ST(\delta^{(0,1)}_{n+2}/\delta_n)$ & $\displaystyle\sum_{\pi\in\Alt_{2n}}q^{\maj(\eta_{2n}\pi^{-1})}$
 & $\displaystyle \sum_{\pi\in\Alt_{2n}}q^{\inv(\pi)+\nasc(\pi_*)}$ & $\frac{1}{(1,0)}$ \\
$\tau^{<\ge}_{2n}(q)$ & $\SSYT(\delta_{n+2}^{(1,0)}/\delta_{n})$ & $\displaystyle\sum_{\pi\in\Ralt_{2n}}q^{\maj(\pi^{-1})}$&
$\displaystyle \sum_{\pi\in\Ralt_{2n}}q^{\inv(\pi)}$ & $\frac{1}{(2,-1)}$ \\
$\tau^{\le\ge}_{2n}(q)$ & $\RPP(\delta_{n+2}^{(1,0)}/\delta_{n})$ & $\displaystyle\sum_{\pi\in\Ralt_{2n}}q^{\maj(\eta_{2n}\pi^{-1})}$
 & $\displaystyle \sum_{\pi\in\Ralt_{2n}}q^{\inv(\pi)-\ndes(\pi_*)}$ & $\frac{1}{(1,-1)}$ \\
$\tau^{<>}_{2n}(q)$ & $\ST(\delta_{n+2}^{(1,0)}/\delta_{n})$ & $\displaystyle\sum_{\pi\in\Ralt_{2n}}q^{\maj(\kappa_{2n}\pi^{-1})}$
 & $\displaystyle \sum_{\pi\in\Ralt_{2n}}q^{\inv(\pi)+\des(\pi_*)}$ & $\frac{1}{(1,0)}$ \\
\hline
  \end{tabular}
  \caption{Interpretations for Prodinger's $q$-secant numbers. The notation $\pi_*$ means it can be either $\pi_o$ or $\pi_e$.}
  \label{tab:q-sec}
\end{table}

It is natural to ask a direct bijective proof of $M=I$ in Theorems~\ref{thm:q-tan} and \ref{thm:q-sec}. For the next two subsections we accomplish this by finding Foata-type bijections. 

\begin{remark}
The last two expressions for $I$ in Table~\ref{tab:q-sec} have not been considered in \cite{Huber2010}. Therefore, in order to complete the proof of Theorem~\ref{thm:q-sec} we need to find a connection between these expressions with other known results. 
In Section~\ref{sec:more_foata} we find a bijective proof of $M=I$ for all cases in Tables~\ref{tab:q-tan} and \ref{tab:q-sec}. Note that in Table~\ref{tab:q-sec} the second and fifth rows are equal and the third and sixth rows are equal, which imply
\[
\sum_{\pi\in\Alt_{2n}}q^{\inv(\pi)-\asc(\pi_*)} = \sum_{\pi\in\Ralt_{2n}}q^{\inv(\pi)-\ndes(\pi_*)}
\]
and
\[
\sum_{\pi\in\Alt_{2n}}q^{\inv(\pi)+\nasc(\pi_*)} = \sum_{\pi\in\Ralt_{2n}}q^{\inv(\pi)+\des(\pi_*)}.
\]
It would be interesting to find a direct bijective proof of the above two identities.
\end{remark}

\subsection{Foata-type bijection for $E^*_{2n+1}(q)$.}\
\label{sec:foata-type-bijection}

We denote by $\Alt^{-1}_n$ the set of permutations $\pi\in \Sym_n$ with $\pi^{-1}\in \Alt_n$.
Note that $\pi\in\Alt^{-1}_n$ if and only if the relative position of $2i-1$ and $2i$ in $\pi$,
for all $1\le i\le \flr{n/2}$, is
\[
\pi= \cdots (2i-1) \cdots (2i) \cdots
\]
and the relative position of $2i$ and $2i+1$ in $\pi$, for all $1\le i\le \flr{(n-1)/2}$, is
\[
\pi= \cdots (2i+1) \cdots (2i) \cdots.
\]

Let $\prec$ be a total order on $\NN$.
For a word $w_1\dots w_k$ consisting of distinct positive integers, we define
$f(w_1\dots w_k,\prec)$ as follows. 
Let $b_0,b_1,\dots,b_m$ be the integers such that 
\begin{itemize}
\item $0=b_0< b_1<\dots<b_m=k-1$,
\item if $w_{k-1}\prec w_k$, then $w_{b_1},\dots,w_{b_m}\prec w_k \prec w_j$ for all $j\in[k-1]\setminus\{b_1,\dots,b_m\}$, and
\item if $w_{k}\prec w_{k-1}$, then $w_j \prec w_k \prec w_{b_1},\dots,w_{b_m} $ for all $j\in[k-1]\setminus\{b_1,\dots,b_m\}$. 
\end{itemize}
For $1\le j\le m$, let $B_j=w_{b_{j-1}+1}\dots w_{b_j}$. 
We denote
\[
B(w_1\dots w_k,\prec) = (B_1,B_2,\dots, B_m).
\]
Note that $w_1\dots w_{k-1}w_k$ is the concatenation $B_1B_2\dots B_m w_k$. 
Let $B_j'=w_{b_j}w_{b_{j-1}+1}\dots w_{b_{j}-1}$. Then we define
\[
f(w_1\dots w_k,\prec)=B'_1B'_2\dots B'_m w_k. 
\]
For example,
\[
f(496318725,<) =f(|4|963|1|872|5,<)= 439612875,
\]
where $|4|963|1|872|5$ means that $B(496318725,<)=(4,963,1,872)$. 

For a permutation $\pi=\pi_1\dots \pi_n\in \Sym_n$ and a total order $\prec$ on $\NN$, we define
$F(\pi,\prec)$ as follows. Let $w^{(1)}=\pi_1$. For $2\le k\le n$, let
$w^{(k)}=f(w^{(k-1)}\pi_k,\prec)$. Finally $F(\pi,\prec)=w^{(n)}$.
Note that for the natural order $1<2<\cdots$, the map $F(\pi,<)$ is the same as the Foata map. 

For $i\ge1$, we define $<_i$ to be the total order on $\NN$ 
obtained from the natural ordering by reversing the order of $i$ and $i+1$, i.e.,
for $a<b$ with $(a,b)\ne(i,i+1)$, we have $a<_ib$ and $i+1<_i i$.

For $\pi\in \Alt^{-1}_{2n+1}$, we define $\FA(\pi)$ as follows. 
First, we set $w^{(1)}=\pi_1$. For $2\le k\le 2n+1$, there are two cases:
\begin{itemize}
\item If $\pi_k=2i$ and $\pi_1\dots\pi_{k-1}$ does not have $2i+2$, then $w^{(k)}=f(w^{(k-1)}\pi_k, <_{2i})$.
\item Otherwise, $w^{(k)}=f(w^{(k-1)}\pi_k, <)$.
\end{itemize}
Then $\FA(\pi)$ is defined to be $w^{(2n+1)}$. 

\begin{exam}
Let $\pi=317295486 \in\Alt^{-1}_9$. Then 
\begin{align*}
w^{(1)} &= 3, \\
w^{(2)} &= f(|3|1,<)=31, \\
w^{(3)} &= f(|3|1|7,<) =317 , \\
w^{(4)} & =f(|\mathbf{3}17|\mathbf{2},<_2)=7312,   &\mbox{(There is no 4 before 2.)}\\
 w^{(5)} &= f(|7|3|1|2|9,<)=73129,\\
 w^{(6)} &= f(|7|3129|5,<)=793125,\\
 w^{(7)} &= f(|793|1|2|\mathbf{5}|\mathbf{4},<_4)=3791254, &\mbox{(There is no 6 before 4.)}\\
 w^{(8)} &= f(|3|7|\mathbf{9}|1|2|5|4|\mathbf{8},<_8)=37912548, &\mbox{(There is no 10 before 8.)}\\
 w^{(9)} &= f(|37|9|12548|6,<)=739812546, &\mbox{(There is 8 before 6.)}
\end{align*}
where the two integers whose order is reversed are written in bold face.
Thus $\FA(317295486)=739812546$. 
\end{exam}

\begin{thm}\label{thm:foata}
The map $\FA$ induces a bijection $\FA:\Alt^{-1}_{2n+1}\to \Alt^{-1}_{2n+1}$. Moreover,
if $\pi\in \Alt^{-1}_{2n+1}$ and $\sigma=\FA(\pi)$, then
\[
\maj(\kappa_{2n+1} \pi) = \inv(\sigma)-\ndes((\sigma^{-1})_e).
\]  
\end{thm}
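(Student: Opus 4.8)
The plan is to prove the statistic identity and bijectivity simultaneously by inducting along the insertion, reading $\pi$ one letter at a time and tracking the intermediate words $w^{(1)},\dots,w^{(2n+1)}=\sigma$. First I would isolate the mechanism of $f$ in a lemma valid for any total order $\prec$: for a word $w=w_1\cdots w_{k-1}$ of distinct integers and an appended letter $a$, the inversion number relative to $\prec$ changes by
\[
\inv_\prec(f(wa,\prec))-\inv_\prec(w)=
\begin{cases} k-1 & \text{if } w_{k-1}\succ a,\\ 0 & \text{if } w_{k-1}\prec a,\end{cases}
\]
so the increment depends only on the current last letter. This comes from the block description: each rotation $B_j\mapsto B_j'$ changes the within-block inversions by $\pm(|B_j|-1)$ while the terminal $a$ contributes its inversions with the cut points, and the totals cancel in the ascent case and sum to $k-1$ in the descent case. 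Writing $\bar\pi=\kappa_{2n+1}\pi$ for the word obtained from $\pi$ by exchanging the values $2i\leftrightarrow 2i+1$, and letting $t_k$ be the number of twisted steps (those using some $<_{2i}$) among the first $k$, the engine of the proof is the invariant
\[
\inv\!\left(w^{(k)}\right)=\maj(\bar\pi_1\cdots\bar\pi_k)+t_k,
\]
which at $k=2n+1$ becomes $\inv(\sigma)=\maj(\kappa_{2n+1}\pi)+t_{2n+1}$.

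\textbf{The inductive step.} Because appending $\pi_k$ places it at the end, $w^{(k-1)}$ ends in $\pi_{k-1}$ and the increment lemma applies. At an untwisted step (order $<$) I would use that $\pi\in\Alt^{-1}_{2n+1}$ rules out a $\kappa$-flip at the boundary: the increment of $\maj(\bar\pi)$ and that of $\inv$ disagree only when $\{\pi_{k-1},\pi_k\}$ is a $\kappa$-pair $\{2i,2i+1\}$, and the alternating constraint forbids the order $2i,2i+1$ while in the order $2i+1,2i$ the constraint $\mathrm{pos}_\pi(2i+2)>\mathrm{pos}_\pi(2i+1)$ forces $2i+2$ to follow $2i$, making the step twisted. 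Hence untwisted steps satisfy $[\pi_{k-1}>\pi_k]=[\bar\pi_{k-1}>\bar\pi_k]$, so the $\inv$-increment equals the $\maj$-increment and $t_k$ is unchanged. At a twisted step ($\pi_k=2i$, $2i+2$ absent, order $<_{2i}$) the partner $2i+1$ is already present by the constraint $\mathrm{pos}_\pi(2i+1)<\mathrm{pos}_\pi(2i)$, so $\inv$ and $\inv_{<_{2i}}$ agree on $w^{(k-1)}$; combining the increment lemma for $<_{2i}$, the identity $[\,2i\prec\pi_{k-1}\text{ in }<_{2i}\,]=[\bar\pi_{k-1}>\bar\pi_k]$, and the fact that the freshly placed $2i$ is last in $w^{(k)}$ (so $2i+1$ precedes $2i$ and $\inv=\inv_{<_{2i}}+1$ there), the $\inv$-increment exceeds the $\maj$-increment by exactly one, matching $t_k-t_{k-1}=1$. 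This twisted bookkeeping, together with the verification that $\Alt^{-1}_{2n+1}$ excludes precisely the dangerous untwisted flips, is the crux of the argument.

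\textbf{Bijectivity.} Each insertion $f(\cdot,\prec)$ is individually reversible --- strip the terminal letter and undo the rotations --- so it suffices to recover the order used at each step while peeling letters off in reverse. Given $w^{(k)}$, its last letter is $\pi_k$ and the entire letter set $\{\pi_1,\dots,\pi_k\}$ is visible, so the rule ``$\pi_k=2i$ with $2i+2\notin\{\pi_1,\dots,\pi_{k-1}\}$'' can be tested directly and the order is pinned down; thus $\FA^{-1}$ is well defined. I would then show by induction that the insertion preserves the structural property defining $\Alt^{-1}_{2n+1}$ on the values inserted so far, which yields both $\FA(\pi)\in\Alt^{-1}_{2n+1}$ and that the reconstruction stays in $\Alt^{-1}_{2n+1}$; an injective self-map of the finite set $\Alt^{-1}_{2n+1}$ is then a bijection.

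\textbf{Identifying the correction term.} Finally I would prove $t_{2n+1}=\ndes((\sigma^{-1})_e)$. Since $(\sigma^{-1})_e=(\mathrm{pos}_\sigma(2),\mathrm{pos}_\sigma(4),\dots,\mathrm{pos}_\sigma(2n))$, its non-descents (counting the terminal index) are the indices $i\in[n]$ with $i=n$ or $\mathrm{pos}_\sigma(2i)<\mathrm{pos}_\sigma(2i+2)$, whereas the step inserting $2i$ is twisted exactly when $i=n$ or $\mathrm{pos}_\pi(2i)<\mathrm{pos}_\pi(2i+2)$. The identity therefore reduces to the statement that the insertion preserves, for each $i$, the relative order of the consecutive even values $2i$ and $2i+2$, which I would extract from the structural invariant established for bijectivity. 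Combined with the invariant this gives $\maj(\kappa_{2n+1}\pi)=\inv(\sigma)-\ndes((\sigma^{-1})_e)$.
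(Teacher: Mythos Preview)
Your approach is the paper's: establish by induction on $k$ an invariant relating $\maj$ of a $\kappa$-modified prefix of $\pi$ to $\inv(w^{(k)})$ minus a correction term, then read off the result at $k=2n+1$. Your packaging is cleaner in that you isolate the Foata increment as a lemma valid for any total order $\prec$, reducing the paper's seven subcases to the single check that $[\pi_{k-1}\succ\pi_k]=[\bar\pi_{k-1}>\bar\pi_k]$ at each step (plus the $\pm 1$ bookkeeping at twisted steps). One point needs more care, however. Your correction term $t_k$ counts twisted steps, a statistic of $\pi$, whereas the paper's $t(w^{(k)})$ counts even values $2i$ in $w^{(k)}$ with no preceding $2i{+}2$, a statistic of $w^{(k)}$. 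The paper's choice makes the terminal identification $t(w^{(2n+1)})=\ndes((\sigma^{-1})_e)$ immediate; yours requires the extra claim that the relative order of each even pair $2i,2i{+}2$ is preserved by the insertion. You propose to ``extract'' this from the structural invariant used for bijectivity, but that invariant --- preservation of the $\Alt^{-1}$ property, equivalently of the relative order of \emph{consecutive} integers $m,m{+}1$ --- does not directly cover the non-consecutive pair $2i,2i{+}2$. The claim is nonetheless true and short: a block rotation can swap $2i$ and $2i{+}2$ only if the pivot $\pi_k$ lies strictly between them in the governing order, and using $\pi\in\Alt^{-1}_{2n+1}$ one checks that whenever this happens one of $2i,2i{+}2$ is not yet present. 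This is exactly what the paper verifies inside its case analysis when it tracks how $t(w^{(k)})$ changes, so you should insert that argument explicitly rather than deferring it to the bijectivity invariant.
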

\begin{proof}
One can easily see that each step in the construction of $\FA$ is invertible. Thus $\FA:\Sym_n\to \Sym_n$ is a bijection. By the construction of $\FA$, the relative positions of consecutive integers never change. Thus $\Des(\pi^{-1})=\Des(\FA(\pi)^{-1})$, which implies that
$\FA:\Alt^{-1}_{2n+1}\to \Alt^{-1}_{2n+1}$ is also a bijection. 
 
For the proof of the second statement, let $\pi=\pi_1\dots\pi_{2n+1}\in\Alt_{2n+1}^{-1}$
and $w^{(k)}$ the word in the construction of $\FA(\pi)$ for $k\in[2n+1]$. 
We claim that for every $k\in[2n+1]$,
\begin{equation}
  \label{eq:claim}
\maj(\kappa\pi^{(k)}) = \inv(w^{(k)})-t(w^{(k)}),
\end{equation}
where $\pi^{(k)}=\pi_1\dots\pi_k$, $\kappa\pi^{(k)}$ is the word obtained from $\pi^{(k)}$ by interchanging $2i$ and $2i+1$
for all $i$ for which $\pi^{(k)}$ contains both $2i$ and $2i+1$, and
$t(w^{(k)})$ is the number of even integers $2i$ for which $w^{(k)}$ contains $2i$ 
and there is no $2i+2$ before $2i$ in $w^{(k)}$. 
Note that if $w$ is a permutation then $t(w)=\ndes(w^{-1}_e)$. 
Thus \eqref{eq:claim} for $k=2n+1$ implies the second statement of the lemma.

We prove \eqref{eq:claim} by induction on $k$. It is true for $k=0$ since both sides are equal to $0$. Assuming the claim \eqref{eq:claim} for $k-1\ge 0$, we consider the claim for $k$. There are several cases as follows.

\begin{description}
\item[Case 1] $\pi_k=2i+1$ for some $i$. Then $w^{(k)}=f(w^{(k-1)}\pi_k, <)$.
Since $\pi\in\Alt_{2n+1}^{-1}$, $\pi^{(k)}$ does not have $2i$ and we have $\maj(\kappa\pi^{(k)})=\maj((\kappa\pi^{(k-1)})\pi_k)$.
\begin{description}
\item[Subcase 1-(a)] $\pi_{k-1}>2i+1$. Since the last element of $\kappa\pi^{(k-1)}$ remains greater than $\pi_k$, we have
$\maj(\kappa\pi^{(k)})=\maj((\kappa\pi^{(k-1)}))+k-1$.
Let 
\[
B(w^{(k-1)}\pi_k,<)=(B_1,\dots,B_\ell).
\]
Since the last element of $w^{(k-1)}$ is $\pi_{k-1}>\pi_k$, for all $j\in[\ell]$, the last element of $B_j$ is the only element in $B_j$ which is greater than $\pi_k$. For every element $x$ in $B_j$, if it is the last element in $B_j$, then $x$ and $\pi_k$ form a new inversion in $w^{(k)}$, and otherwise
$x$ and the last element of $B_j$ form a new inversion in $w^{(k)}$. Thus $\inv(w^{(k)})=\inv(w^{(k-1)})+k-1$. 
Moreover, we have $t(w^{(k)})  = t(w^{(k-1)})$ because the relative position of $2r$ and $2r+2$ does not change in $w^{(k)}$. 
The relative position of $2r$ and $2r+2$ can be changed only if $2r<2i+1<2r+2$, which implies $r=i$. This is a contradiction to the fact that $2i$ is not in $\pi^{(k)}$. Thus $\inv(w^{(k)})-t(w^{(k)})=\inv(w^{(k-1)}) -t(w^{(k-1)})+k-1$ and the claim is also true for $k$. 

\item[Subcase 1-(b)] $\pi_{k-1}<2i$. In this case $\maj(\kappa\pi^{(k)})=\maj((\kappa\pi^{(k-1)}))$. By the same arguments one
can show that $\inv(w^{(k)}) = \inv(w^{(k-1)})$ and $t(w^{(k)})=(w^{(k-1)})$, which imply the claim for $k$.
 
\end{description}
\item[Case 2] $\pi_k=2i$ for some $i$. Since $\pi\in\Alt^{-1}_{2n+1}$, $\pi^{(k)}$ contains $2i+1$. There are $5$ subcases.

  \begin{description}
  \item [Subcase 2-(a)] $\pi_{k-1}=2i+1$. In this case $\pi^{(k)}$ does not have $2i+2$ since $\pi\in\Alt^{-1}_{2n+1}$. 
Thus $w^{(k)}=f(w^{(k-1)}\pi_k,<_{2i})$. 
Observe that the last two elements of $\kappa\pi^{(k)}$ are $2i$ and $2i+1$ in this order, which implies $\maj(\kappa\pi^{(k)}) = \maj(\kappa\pi^{(k-1)})$.
Let 
\[
B(w^{(k-1)}\pi_k,<_{2i})=(B_1,\dots,B_\ell).
\]
The last element of $w^{(k-1)}$ is $\pi_{k-1}=2i+1$, which is smaller than $\pi_k=2i$ with respect to the order $<_{2i}$. 
Hence, for every $j\in[\ell]$, the last element in $B_j$ is the only element there which is smaller than $2i$ with respect to $<_{2i}$. 
If $x\in B_j$ is the last element in $B_j$, then $x<_{2i} 2i$. In this case $x$ and $2i$ form a new inversion in $w^{(k)}$ 
if and only if  $x=2i+1$. On the other hand, if $x\in B_j$ is not the last element $y$ in $B_j$, then $y<_{2i} 2i <_{2i} x$. 
In this case when we construct $w^{(k)}$ we lose the inversion formed by $x$ and $y$ in $w^{(k-1)}$
and get a new inversion formed by $x$ and $2i$. Therefore, in total, $\inv(w^{(k)}) = \inv(w^{(k-1)}) +1$. 
Since the relative position of $2r$ and $2r+2$ does not change, the last element of $w^{(k)}$ is $2i$ and $w^{(k)}$ does not have $2i+2$, we have
$t(w^{(k)})  = t(w^{(k-1)})+1$. Therefore, $\inv(w^{(k)})-t(w^{(k)}) = \inv(w^{(k-1)})-t(w^{(k-1)})$, which implies the claim for $k$.

  \item [Subcase 2-(b)] $\pi_{k-1}>2i+1$ and $\pi^{(k)}$ has $2i+2$. In this case we have
$w^{(k)}=f(w^{(k-1)}\pi_k,<)$. By similar arguments as in Subcase 1-(a), we have
$\maj(\kappa\pi^{(k)})=\maj(\kappa\pi^{(k-1)})+k-1$, 
$\inv(w^{(k)}) = \inv(w^{(k-1)})+k-1$ and
$t(w^{(k)}) = t(w^{(k-1)})$. Thus $\inv(w^{(k)})-t(w^{(k)}) = \inv(w^{(k-1)})-t(w^{(k-1)})+k-1$, which implies the claim for $k$. 

  \item [Subcase 2-(c)] $\pi_{k-1}>2i+1$ and $\pi^{(k)}$ does not have $2i+2$.
In this case, $w^{(k)}=f(w^{(k-1)}\pi_k,<_{2i})$. By similar arguments as in Subcase 2-(a), we obtain
$\maj(\kappa\pi^{(k)})=\maj(\kappa\pi^{(k-1)})+k-1$, 
$\inv(w^{(k)}) = \inv(w^{(k-1)})+k$ and
$t(w^{(k)}) = t(w^{(k-1)})+1$. Thus $\inv(w^{(k)})-t(w^{(k)}) = \inv(w^{(k-1)})-t(w^{(k-1)})+k-1$, which implies the claim for $k$.

  \item [Subcase 2-(d)] $\pi_{k-1}< 2i$ and $\pi^{(k)}$ has $2i+2$.
In this case, $w^{(k)}=f(w^{(k-1)}\pi_k,<)$. By similar arguments as in Subcase 1-(a), we obtain
$\maj(\kappa\pi^{(k)})=\maj(\kappa\pi^{(k-1)})$, 
$\inv(w^{(k)}) = \inv(w^{(k-1)})$ and
$t(w^{(k)}) = t(w^{(k-1)})$. Thus we have $\inv(w^{(k)})-t(w^{(k)}) = \inv(w^{(k-1)})-t(w^{(k-1)})+k-1$, which implies the claim for $k$. 

  \item [Subcase 2-(e)] $\pi_{k-1}< 2i$ and $\pi^{(k)}$ does not have $2i+2$. In this case, we have
 $w^{(k)}=f(w^{(k-1)}\pi_k, <_{2i})$. By similar arguments as in Subcase 2-(a), we obtain
$\maj(\kappa\pi^{(k)})=\maj(\kappa\pi^{(k-1)})$, 
$\inv(w^{(k)}) = \inv(w^{(k-1)})+1$ and
$t(w^{(k)}) = t(w^{(k-1)})-1$. Thus $\inv(w^{(k)})-t(w^{(k)}) = \inv(w^{(k-1)})-t(w^{(k-1)})$, which implies the claim for $k$. 
  \end{description}
\end{description}
In any case, the claim is true for $k$, which completes the proof. 
\end{proof}

\begin{cor}
We have
\[
\sum_{\pi\in \Alt_{2n+1}} q^{\maj(\kappa_{2n+1} \pi^{-1})}
 =\sum_{\pi\in \Alt_{2n+1}} q^{\inv(\pi)-\ndes(\pi_e)}.
\]
\end{cor}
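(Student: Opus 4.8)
The plan is to deduce this identity directly from Theorem~\ref{thm:foata} by conjugating with the inverse map on both sides, so that the only substantive input is the already-established statistic-preserving bijection $\FA$. First I would rewrite the left-hand side in terms of $\Alt^{-1}_{2n+1}$. As $\pi$ runs over $\Alt_{2n+1}$, its inverse $\rho=\pi^{-1}$ runs bijectively over $\Alt^{-1}_{2n+1}$, by the very definition of $\Alt^{-1}_{2n+1}$ as the set of permutations whose inverse is alternating. Hence
\[
\sum_{\pi\in\Alt_{2n+1}} q^{\maj(\kappa_{2n+1}\pi^{-1})} = \sum_{\rho\in\Alt^{-1}_{2n+1}} q^{\maj(\kappa_{2n+1}\rho)}.
\]

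Next I would apply the bijection $\FA:\Alt^{-1}_{2n+1}\to\Alt^{-1}_{2n+1}$ furnished by Theorem~\ref{thm:foata}. Writing $\sigma=\FA(\rho)$, the theorem gives $\maj(\kappa_{2n+1}\rho)=\inv(\sigma)-\ndes((\sigma^{-1})_e)$, and since $\FA$ is a bijection the variable $\sigma$ ranges over all of $\Alt^{-1}_{2n+1}$ as $\rho$ does. Therefore the previous sum equals
\[
\sum_{\sigma\in\Alt^{-1}_{2n+1}} q^{\inv(\sigma)-\ndes((\sigma^{-1})_e)}.
\]

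Finally I would undo the inverse on this side. Setting $\tau=\sigma^{-1}$, we have $\tau\in\Alt_{2n+1}$ exactly when $\sigma\in\Alt^{-1}_{2n+1}$, so $\tau$ ranges over $\Alt_{2n+1}$. Using the standard fact that a permutation and its inverse have the same number of inversions we get $\inv(\sigma)=\inv(\tau)$, while $\ndes((\sigma^{-1})_e)=\ndes(\tau_e)$ holds by the definition of $\tau$. Consequently the displayed sum becomes $\sum_{\tau\in\Alt_{2n+1}} q^{\inv(\tau)-\ndes(\tau_e)}$, which is precisely the right-hand side of the corollary, completing the argument.

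Since all three steps are either definitional substitutions (the inverse map realizing the $\Alt$/$\Alt^{-1}$ correspondence) or a direct invocation of Theorem~\ref{thm:foata}, there is no genuine obstacle here: all of the real difficulty has already been absorbed into the proof of Theorem~\ref{thm:foata}, in particular into the case analysis verifying $\maj(\kappa_{2n+1}\pi)=\inv(\sigma)-\ndes((\sigma^{-1})_e)$ along the construction of $\FA$. The only point that requires a moment's care is checking that taking inverses is compatible with the relevant statistics, namely that $\inv$ is invariant under inversion and that the passage between $\Alt_{2n+1}$ and $\Alt^{-1}_{2n+1}$ is exactly the inverse map, so that the two substitutions correctly align the alternating permutations appearing on each side of the corollary with those in the theorem.
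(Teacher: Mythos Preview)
Your proof is correct and follows essentially the same route as the paper: both rewrite the left-hand sum over $\Alt^{-1}_{2n+1}$ via the inverse map, invoke Theorem~\ref{thm:foata} to pass to $\inv(\sigma)-\ndes((\sigma^{-1})_e)$, and then substitute $\sigma\mapsto\sigma^{-1}$ using $\inv(\sigma)=\inv(\sigma^{-1})$ to reach the right-hand side. Your exposition is simply more explicit about the invariance of $\inv$ under inversion, which the paper leaves implicit.
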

\begin{proof}
By Theorem~\ref{thm:foata},  
\[
\sum_{\pi\in \Alt_{2n+1}} q^{\maj(\kappa_{2n+1} \pi^{-1})} =
\sum_{\pi\in \Alt^{-1}_{2n+1}} q^{\maj(\kappa_{2n+1} \pi)}=
\sum_{\sigma\in \Alt^{-1}_{2n+1}} q^{\inv(\sigma)-\ndes((\sigma^{-1})_e)}.
\]
Since
\[
\sum_{\sigma\in \Alt^{-1}_{2n+1}} q^{\inv(\sigma)-\ndes((\sigma^{-1})_e)}
=\sum_{\sigma\in \Alt_{2n+1}} q^{\inv(\sigma)-\ndes(\sigma_e)},
\]
we are done.
\end{proof}

\subsection{Foata-type bijections for other $q$-Euler numbers}
\label{sec:more_foata}

We give Foata-type bijections which imply the identities $M=I$ in Tables~\ref{tab:q-tan} and \ref{tab:q-sec}.

\begin{thm}\label{thm:foata_mod}
For each row  $\mathrm{Set}$,  $N$, $A(\pi)$, $B(\sigma)$, $C$, $D$, $\prec$ in Table~\ref{tab:foata}, the modified Foata map $F_{\mathrm{mod}}$ is defined as follows. For $\pi=\pi_1\dots\pi_N\in \mathrm{Set}$, $F_{\mathrm{mod}}(\pi)=w^{(N)}$, where
$w^{(1)}=\pi_1$ and, for $2\le k\le N$, 
\[
w^{(k)}=
\begin{cases}
f(w^{(k-1)}\pi_k, \prec) & \mbox{if $\pi_k=C$ and $\pi_1\dots\pi_{k-1}$ does not have $D$,}\\
w^{(k)}=f(w^{(k-1)}\pi_k, <) & \mbox{otherwise.}
\end{cases}
\]
Then $F_{\mathrm{mod}}: \mathrm{Set}\to\mathrm{Set}$ is a bijection such that
if $\sigma=F_{\mathrm{mod}}(\pi)$, then $A(\pi)=B(\sigma)$. 
\end{thm}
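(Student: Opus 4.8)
The plan is to follow the proof of Theorem~\ref{thm:foata} line by line, treating all rows of Table~\ref{tab:foata} in a uniform manner. The argument splits into two independent parts: first that $F_{\mathrm{mod}}$ is a bijection on $\mathrm{Set}$, and second that the statistic identity $A(\pi)=B(\sigma)$ holds. For the first part, I would observe that each elementary step, whether it is $w^{(k)}=f(w^{(k-1)}\pi_k,\prec)$ or $w^{(k)}=f(w^{(k-1)}\pi_k,<)$, is invertible regardless of which total order is used: given $w^{(k)}$ together with the knowledge of which order was applied, one recovers $w^{(k-1)}$ and $\pi_k$ exactly as in the classical Foata correspondence. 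Since the rule selecting $\prec$ versus $<$ depends only on whether the prefix already contains $D$, and this information is preserved under the construction, the whole map $F_{\mathrm{mod}}:\Sym_N\to\Sym_N$ is a bijection.

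To see that $F_{\mathrm{mod}}$ restricts to a bijection on $\mathrm{Set}$, I would reuse the key structural observation from Theorem~\ref{thm:foata}: the map $f$ never alters the relative order of two consecutive integers, and the same remains true when $f$ is applied with a reversed order $\prec$, since $\prec$ transposes only the single pair $i,i+1$, which by the condition defining when $\prec$ is invoked cannot produce a genuine reordering of a consecutive pair in the inverse. Consequently $\Des(\pi^{-1})=\Des(F_{\mathrm{mod}}(\pi)^{-1})$, so $F_{\mathrm{mod}}$ preserves membership in $\Alt^{-1}_N$ or $\Ralt^{-1}_N$, whichever defines the row, and hence restricts to a bijection on $\mathrm{Set}$.

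For the statistic identity I would prove, by induction on $k$, an intermediate claim of the form $A(\pi^{(k)})=\inv(w^{(k)})-t(w^{(k)})$, where $\pi^{(k)}=\pi_1\dots\pi_k$, $A(\pi^{(k)})$ is the maj-type statistic applied to the conjugated prefix, and $t(w^{(k)})$ is the appropriate auxiliary count; at $k=N$ the quantity $\inv(w^{(N)})-t(w^{(N)})$ becomes exactly $B(\sigma)$, since $t(w^{(N)})$ collapses to the $\ndes$, $\asc$, $\nasc$ or $\des$ term appearing in $B$. The inductive step requires precisely the case analysis of Theorem~\ref{thm:foata}: one splits according to whether the inserted letter $\pi_k$ is of the distinguished type $C$ and whether the prefix already contains $D$, and within each branch one tracks how inserting $\pi_k$ changes $A$ on the conjugated prefix, how it changes $\inv(w^{(k)})$, and how it changes $t(w^{(k)})$. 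In every branch the net change to $A$ equals the net change to $\inv-t$, so the claim propagates from $k-1$ to $k$.

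The hard part will be verifying this bookkeeping uniformly for all rows at once, rather than re-running the six-way subcase computation separately for each entry of Table~\ref{tab:foata}. The delicate point is that the correction term $t$, the conjugating permutation entering $A$, and the reversed order $\prec$ must be matched so that the single new inversion (or lost inversion) created by applying $f$ with $\prec$ is exactly compensated by the change in $t$. I expect the cleanest route is to isolate a short lemma describing the effect of one insertion step on the pair $(\inv,t)$ and on the conjugated major index, stated so that the sign conventions of each row ($+\nasc$, $-\asc$, $-\ndes$, $+\des$) are absorbed into the definition of $t$ and into the data $C$, $D$, $\prec$; the rows then become instances differing only in these data, and the single computation in Theorem~\ref{thm:foata} serves as the model for each.
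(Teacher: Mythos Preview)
Your proposal is correct and follows exactly the approach the paper intends: the paper omits the proof of Theorem~\ref{thm:foata_mod} entirely, stating only that it is similar to the proof of Theorem~\ref{thm:foata}, and your outline reproduces that proof scheme (invertibility of each $f$-step, preservation of $\Des(\pi^{-1})$ via stability of relative positions of consecutive integers, and an inductive claim of the form $A(\pi^{(k)})=\inv(w^{(k)})-t(w^{(k)})$ handled by the same case analysis).
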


We omit the proof of Theorem~\ref{thm:foata_mod} since it is similar to the proof of Theorem~\ref{thm:foata}.

\begin{table}
  \centering
\begin{tabular}{cclllll}
\hline
 $\mathrm{Set}$  &  $N$  &  $A(\pi)$  &  $B(\sigma)$  &  $C$  &  $D$  &  $\prec$  \\
\hline\
$\Alt^{-1}_{N}$ & $2n+1$ & $\maj(\pi)$ & $\inv(\sigma)$ & $-$ & $-$ & $-$ \\
 & & $\maj(\kappa_N\pi)$ & $\inv(\sigma)-\ndes((\sigma^{-1})_e)$ & $2i$ & $2i+2$ & $<_{2i}$\\
 & & $\maj(\eta_N\pi)$ & $\inv(\sigma)+\nasc((\sigma^{-1})_e)$ & $2i$ & $2i-2$ & $<_{2i-1}$ \\
 & $2n$ & $\maj(\pi)$ & $\inv(\sigma)$ & $-$ & $-$ & $-$  \\
 & & $\maj(\kappa_N\pi)$ & $\inv(\sigma)-\asc((\sigma^{-1})_*)$ & $2i$ & $2i+2$ & $<_{2i}$  \\
 & & $\maj(\eta_N\pi)$ & $\inv(\sigma)+\nasc((\sigma^{-1})_*)$ & $2i$ & $2i-2$ & $<_{2i-1}$ \\
\hline\
$\Ralt^{-1}_{N}$ & $2n+1$ & $\maj(\pi)$ & $\inv(\sigma)$ & $-$ & $-$ & $-$\\
 & & $\maj(\kappa_N\pi)$ & $\inv(\sigma)+\des((\sigma^{-1})_o)$ & $2i+1$ & $2i-1$ & $<_{2i}$ \\
 & & $\maj(\eta_N\pi)$ & $\inv(\sigma)-\asc((\sigma^{-1})_o)$ & $2i-1$ & $2i+1$ & $<_{2i-1}$ \\
 & $2n$ & $\maj(\pi)$ & $\inv(\sigma)$ & $-$ & $-$ & $-$  \\
 & & $\maj(\kappa_N\pi)$ & $\inv(\sigma)+\des((\sigma^{-1})_*)$ & $2i+1$ & $2i-1$ & $<_{2i}$ \\
 & & $\maj(\eta_N\pi)$ & $\inv(\sigma)-\ndes((\sigma^{-1})_*)$ & $2i-1$ & $2i+1$ & $<_{2i-1}$ \\
\hline\
\end{tabular}
  \caption{Each row defines the modified Foata map $F_{\mathrm{mod}}$ described in Theorem~\ref{thm:foata_mod}.
If $(C,D,\prec)=(-,-,-)$, $F_{\mathrm{mod}}$ is the original Foata map.}
  \label{tab:foata}
\end{table}


\section{Proof of Theorems~\ref{conj:9.3} and \ref{conj:9.6}}
\label{sec:MPP conjectures}

In this section we prove the two conjectures of Morales et al., Theorems~\ref{conj:9.3}~and~\ref{conj:9.6}. 
Let us briefly outline our proof. Recall that both Theorems~\ref{conj:9.3} and \ref{conj:9.6} are of the form $A=Q\det(c_{ij})$. 
In Section~\ref{sec:pleas-diagr-delt} we interpret pleasant diagrams of $\delta_{n+2k}/\delta_n$ as non-intersecting marked Dyck paths.
This interpretation can be used to express $A$ as a generating function for non-intersecting Dyck paths. 
In Section~\ref{sec:modif-lindstr-gess} we show a modification of Lindstr\"om--Gessel--Viennot lemma which allows us to express
$\det(c_{ij})$ as a generating function for weakly non-intersecting Dyck paths. 
In Section~\ref{sec:conn-betw-weakly} we find a connection between the generating function for weakly non-intersecting Dyck paths 
and the generating function for (strictly) non-intersecting Dyck paths. Using these results we prove
Theorems~\ref{conj:9.3}~and~\ref{conj:9.6} in Sections~\ref{sec:proof-th1} and \ref{sec:proof-thm2}.

Recall that $\Dyck_{2n}$ (resp.~$\Sch_{2n}$) is the set of Dyck (resp.~Schr\"oder) paths from $(-n,0)$ to $(n,0)$. Since a Dyck path is a Schr\"oder path without horizontal steps we have $\Dyck_{2n}\subseteq\Sch_{2n}$. 

For a Schr\"oder path $S\in\Sch_{2n}$ and $-n\le i\le n$, we define $S(i)=j$ if $(i,j)\in S$ or $\{(i-1,j),(i+1,j)\}$ is a horizontal step of $S$. For $S_1\in\Sch_{2n}$ and $S_2\in\Sch_{2n+4k}$,
we write $S_1\le S_2$ if $S_1(i)\le S_2(i)$ for all $-n\le i\le n$
and there is no $i$ such that $S_1(i)=S_2(i)$ and $S_1(i+1)= S_2(i+1)$.
Similarly, we write
$S_1< S_2$ if $S_1(i)< S_2(i)$ for all $-n\le i\le n$.
Note that if $S_1<S_2$, then $S_2$ is strictly above $S_1$.
If $S_1\le S_2$, then $S_2$ is weakly above $S_1$ and in addition $S_1$ and $S_2$ do not share any steps.

For a Dyck path $D\in\Dyck_{2n}$, we denote by $\VV(D)$ (resp.~$\HP(D)$) the set of valleys (resp.~high peaks) of $D$.
 
We denote by $\Dyck_{2n}^k$ the set of $k$-tuples $(D_1,\dots,D_k)$ of Dyck paths, where for $i\in[k]$,
\[
D_i\in\Dyck_{2n+4i-4} = \Dyck((-n-2i+2,0)\to(n+2i-2,0)).
\]
For brevity, if $(D_1,\dots,D_k)\in\Dyck_{2n}^k$
and $D_1\prec D_2\prec \dots\prec D_k$, where $\prec$ can be $<$ or $\le$, we will simply write
$(D_1\prec D_2\prec \dots\prec D_k)\in\Dyck_{2n}^k$. For example, 
$(D_1\le D_2<D_3)\in\Dyck_{2n}^3$ means $(D_1,D_2,D_3)\in\Dyck_{2n}^3$ and $D_1\le D_2<D_3$.

\subsection{Pleasant diagrams of $\delta_{n+2k}/\delta_n$ and non-intersecting marked Dyck paths}\
\label{sec:pleas-diagr-delt}

For a point $p=(i,j)\in\ZZ\times\NN$, the \emph{height} $\HT(p)$ of $p$ is defined to be $j$. We identify the square $u=(i,j)$ in the $i$th row and $j$th column in $\delta_{n+2k}$ with the point $p=(j-i,n+2k-i-j)\in\ZZ\times\NN$. Under this identification one can easily check that if a square $u\in \delta_{n+2k}$ corresponds to a point $p\in \ZZ\times\NN$ then the hook length $h(u)$ in $\delta_{n+2k}$ is equal to $2\HT(p)+1$.

We define the set $\ND_{2n}^{k}$ of non-intersecting Dyck paths by
\[
\ND_{2n}^{k} = \{(D_1,\dots,D_k)\in\Dyck_{2n}^k : D_1<\dots<D_k\}.
\]
Morales et al. showed that there is a natural bijection between $\ND_{2n}^k$ and $\EE(\delta_{n+2k}/\delta_n )$ as follows. 

\begin{prop}\cite[Corollary~8.4]{MPP2} \label{prop:excited}
The map $\rho:\ND_{2n}^k\to \EE(\delta_{n+2k}/\delta_n)$ defined by
\[
\rho(D_1,\dots,D_k) = \delta_{n+2k}\setminus (D_1\cup \dots \cup D_k)
\]
is a bijection. 
\end{prop}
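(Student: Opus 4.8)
The plan is to work entirely in the point picture supplied by the identification $u=(i,j)\mapsto p=(j-i,\,n+2k-i-j)$, under which $\delta_{n+2k}$ becomes a triangular array of lattice points and, crucially, an excited move becomes a purely vertical operation: replacing $(i,j)$ by $(i+1,j+1)$ moves the corresponding point from $(x,y)$ straight down to $(x,y-2)$, while its three forbidden neighbors $(i+1,j),(i,j+1),(i+1,j+1)$ become the points $(x-1,y-1),(x+1,y-1),(x,y-2)$. Thus I would regard the points of an excited diagram $D$ as particles and the points of the complement $H=\delta_{n+2k}\setminus D$ as holes; an excited move is a particle hopping down by two along its column $x=c$, allowed exactly when the diamond $(x\pm1,y-1),(x,y-2)$ beneath it consists of holes and lies in the triangle. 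The first payoff is that the number of holes in each column $x=c$ is invariant under excited moves, and a direct count starting from $\mu=\delta_n$ (whose holes are exactly the skew cells, i.e.\ the points of height $<2k$) shows this number equals the number of paths $D_i$ whose horizontal range $[-n-2i+2,\,n+2i-2]$ contains $c$.

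Next I would define the candidate inverse. For a configuration with hole set $H$, order the holes in each column by height and let the $i$-th path (counted among the paths that reach column $c$) pass through the appropriately ranked hole; the previous count makes this assignment consistent. I would prove that, for the initial diagram $\mu=\delta_n$, these paths are precisely the minimal (lowest) nested family in $\ND_{2n}^k$, the zigzag paths bouncing between heights $2(i-1)$ and $2(i-1)+1$ and descending to height $0$ at their endpoints $x=\pm(n+2i-2)$. This is the base case, and it already requires the boundary bookkeeping of deciding, near the slanted edges of the triangle, which $D_i$ a given column-hole belongs to.

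The heart of the argument is a local correspondence: raising a valley of $D_i$ at $(x,y)$ to a peak at $(x,y+2)$ --- legal precisely when the lifted point does not meet $D_{i+1}$ --- is, on the complement, exactly an excited move, namely the particle occupying $(x,y+2)$ hopping down to $(x,y)$. I would check that the non-intersecting constraint on the paths matches the diamond condition on the particles step for step, and that ``the valley can be raised inside the triangle'' matches the containment $(i+1,j+1)\in\lambda$. Granting this, bijectivity follows from connectivity on both sides: every excited diagram is reachable from $\delta_n$ by excited moves by definition, and every family in $\ND_{2n}^k$ is reachable from the minimal family by valley-raisings (a standard induction on the total area below the paths). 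Since $\rho$ intertwines valley-raisings with excited moves and sends the minimal family to $\delta_n$, induction yields both $\rho(\ND_{2n}^k)\subseteq\EE(\delta_{n+2k}/\delta_n)$ and surjectivity; injectivity is immediate because distinct path families delete distinct point sets from $\delta_{n+2k}$.

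I expect the main obstacle to be this local correspondence together with the edge effects: one must verify that in every reachable configuration the holes of a column really do string together into $k$ genuine lattice paths with unit up/down steps (equivalently, that the $i$-th-ranked hole heights in adjacent columns always differ by exactly $1$) and that these paths carry the prescribed Dyck endpoints on the $x$-axis, rather than degenerating near the two slanted boundaries of the triangle. Handling this uniformly, as opposed to the clean interior behavior, is where the careful case analysis lives; alternatively one could bypass it by invoking the general excited-diagram-to-non-intersecting-path correspondence of Morales--Pak--Panova and merely computing the endpoints $(-n-2i+2,0)$ and $(n+2i-2,0)$ for the staircase.
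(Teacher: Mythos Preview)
The paper does not prove this proposition at all: it is quoted verbatim as \cite[Corollary~8.4]{MPP2} and used as a black box. So there is no ``paper's own proof'' to compare against; any argument you supply is additional content.

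Your plan is a correct and standard direct approach, and in fact it is essentially the argument underlying the cited result in Morales--Pak--Panova. The translation of the excited move to a vertical hop $(x,y)\to(x,y-2)$ under the coordinate change is right, the column-hole-count invariant is right, and the local correspondence ``excited move $=$ valley-raising on the complement'' is exactly the mechanism one wants. Your identification of the base case (the minimal nested family of zigzags corresponding to the unexcited diagram $\mu=\delta_n$) is also correct.

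Where you should be careful is the point you yourself flag: showing, for an \emph{arbitrary} excited diagram reached after many moves, that the holes in adjacent columns have heights differing by exactly $1$ when paired by rank, so that they really assemble into $k$ Dyck paths with the correct endpoints. The cleanest way to close this is not a case analysis at the boundary but an induction along the sequence of excited moves: maintain the invariant ``the hole set is the disjoint union of $k$ nested Dyck paths $D_1<\cdots<D_k$ with the prescribed endpoints'' and check that a single excited move preserves it (this is precisely your local correspondence, and it forces the three holes $(x\pm1,y-1),(x,y-2)$ to lie on one $D_i$ because of the strict nesting already in place). Conversely, any valley-raise that keeps the paths nested corresponds to a legal excited move, and connectivity on both sides finishes the bijection. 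With that invariant formulated explicitly, the boundary worries evaporate, since the endpoints never move.
 Your fallback of simply citing Morales--Pak--Panova and computing the endpoints is of course also acceptable, and is exactly what the present paper does.
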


A \emph{marked Dyck path} is a Dyck path in which each point that is not a valley may or may not be marked. 
Let
\[
\ND_{2n}^{*k} = \left\{(D_1,\dots,D_k,C): (D_1,\dots,D_k)\in\ND_{2n}^k, 
C\subset \bigcup_{i=1}^k (D_i \setminus \VV(D_i))\right\}.
\]
Note that an element $(D_1,\dots,D_k,C)\in \ND_{2n}^{*k}$ can be considered
as non-intersecting marked Dyck paths $D_1,\dots,D_k$ with the set $C$ of marked points. 
The following proposition allows us to consider pleasant diagrams of $\delta_{n+2k}/\delta_n$ as non-intersecting marked Dyck paths.

\begin{prop}\label{prop:pleasant}
The map $\rho^*:\ND_{2n}^{*k}\to \PP(\delta_{n+2k}/\delta_n)$ defined by
\[
\rho^*(D_1,\dots,D_k,C) = (D_1\cup \dots \cup D_k)\setminus C
\]
is a bijection. 
\end{prop}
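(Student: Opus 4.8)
The plan is to transport the statement to the lattice-path side via Proposition~\ref{prop:excited} and to recognize the fibers of $\rho^*$ as certain \emph{admissible} path tuples. First I would record that the map is well defined and lands in $\PP(\delta_{n+2k}/\delta_n)$: if $(D_1,\dots,D_k,C)\in\ND_{2n}^{*k}$ then $\rho^*(D_1,\dots,D_k,C)=(D_1\cup\dots\cup D_k)\setminus C\subseteq D_1\cup\dots\cup D_k=\delta_{n+2k}\setminus\rho(D_1,\dots,D_k)$, and since $\rho(D_1,\dots,D_k)\in\EE(\delta_{n+2k}/\delta_n)$, the image is a subset of the complement of an excited diagram, hence pleasant. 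Writing $U(D_\bullet)=D_1\cup\dots\cup D_k$ and $P=\rho^*(D_1,\dots,D_k,C)$, the condition $C\subseteq\bigcup_i(D_i\setminus\VV(D_i))$ is exactly the requirement that $\VV(D_i)\subseteq P$ for all $i$, and then $C=U(D_\bullet)\setminus P$ is forced. So a preimage of $P$ is precisely a tuple $(D_\bullet)\in\ND_{2n}^k$ with $\VV(D_\bullet)\subseteq P\subseteq U(D_\bullet)$, which I call \emph{admissible for $P$}; it remains to show each pleasant $P$ has exactly one admissible tuple.

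For fixed $P$ let $\mathcal{T}_P=\{(D_\bullet)\in\ND_{2n}^k: P\subseteq U(D_\bullet)\}$. The crucial structural fact I would establish is that $\mathcal{T}_P$ is closed under the pointwise join $(D\vee D')_i(x)=\max(D_i(x),D'_i(x))$. The join of two Dyck paths is a Dyck path and strict nesting is preserved by $\max$, so $D\vee D'\in\ND_{2n}^k$; to see $P\subseteq U(D\vee D')$ I would take $p=(x,y)\in P$, note that by strict nesting $p$ lies on a unique level $a$ of $D$ and a unique level $b$ of $D'$, and check (splitting on $a\le b$ and $a\ge b$, using that levels $a$ and $a'$ share a domain and nesting makes all other heights at $x$ differ from $y$) that $p$ lies on level $\min(a,b)$ of $D\vee D'$. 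Hence $\mathcal{T}_P$ is finite and join-closed, so it has a unique maximum $E=E(P)$.

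For injectivity, suppose $(D_\bullet)$ and $(D'_\bullet)$ are both admissible for $P$ but distinct, and set $E=D\vee D'\in\mathcal{T}_P$. Let $i$ be the smallest level with $D_i\neq E_i$; then $D_i\lneq E_i$ while they share endpoints, so on a maximal interval where $E_i>D_i$ the path $D_i$ must descend at the left end and ascend into the right end, forcing a valley $v=(x_0,y_0)$ of $D_i$ with $E_i(x_0)>y_0$, whence $D'_i(x_0)>y_0$. If $v\in U(D'_\bullet)$, then some lower level $l<i$ of $D'$ passes through $v$, i.e.\ $D'_l(x_0)=y_0>D_l(x_0)$, giving $D_l\ne E_l$ and contradicting the minimality of $i$. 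Hence $v\notin U(D'_\bullet)$; but $v\in\VV(D_i)\subseteq P\subseteq U(D'_\bullet)$ since $(D'_\bullet)$ is admissible, a contradiction. Thus $P$ has at most one admissible tuple.

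For surjectivity I would show that the maximum $E=E(P)$ is itself admissible, i.e.\ $\VV(E_i)\subseteq P$ for all $i$. If some $v=(x_0,y_0)\in\VV(E_i)$ were not in $P$, I would consider the maximal vertical stack of valleys of $E$ lying above $v$ in the column $x=x_0$ and raise simultaneously every path of this stack from the level of $v$ upward, turning each such valley into a peak. This keeps the tuple strictly nested — the only possible obstruction, a collision at the top of the stack, is ruled out because it would force a coincidence of steps already present in $E$ — and it alters the union only by deleting the single point $v\notin P$ and adding one point higher up, producing a tuple in $\mathcal{T}_P$ strictly above $E$ and contradicting maximality. Therefore $E$ is admissible and $\bigl(E_\bullet,\,U(E_\bullet)\setminus P\bigr)$ is a preimage of $P$. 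I expect this last step to be the main obstacle: a naive single-valley raise can either collide with the path above or create a new valley outside $P$, so the argument really needs the simultaneous raising of a whole stack of stacked valleys together with the verification that no collision with the next path can occur.
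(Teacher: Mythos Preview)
Your argument is correct and follows the same core idea as the paper's proof: both single out, for a given pleasant diagram $P$, the ``highest'' (equivalently, northwesternmost) tuple $(D_1,\dots,D_k)$ with $P\subseteq D_1\cup\cdots\cup D_k$ as the unique preimage, and both justify surjectivity by observing that a valley outside $P$ would allow the paths to be pushed still higher. The paper does this in a few brisk lines (take the squares ``as northwest as possible''; then all valleys lie in $P$, else push further; for injectivity, compare a valley of $D_1$ against $D_1'$), whereas you formalize the same picture via the join-semilattice structure on $\mathcal{T}_P$ and an explicit stack-raising move---your version is longer but makes precise exactly the steps the paper leaves to the reader.
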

\begin{proof}
By Proposition~\ref{prop:excited}, $\rho^*$ is a well defined map 
from $\ND_{2n}^{*k}$ to $\PP(\delta_{n+2k}/\delta_n)$.
We need to show that $\rho^*$ is surjective and injective. 

Consider an arbitrary pleasant diagram $P\in \PP(\delta_{n+2k}/\delta_n)$. 
By Proposition~\ref{prop:excited}, there is $(D_1,\dots,D_k)\in\ND_{2n}^k$ such that
$P\subset (D_1\cup \dots\cup D_k)$. We can take $(D_1,\dots,D_k)$ so that the squares in
$D_1,\dots, D_k$ are as northwest as possible. Then $P$ contains all valleys in $D_1,\dots,D_k$ because otherwise we make $D_1,\dots,D_k$ more toward northwest. 
Therefore $\rho^*$ is surjective. 

Now suppose that 
\[
\rho^*(D_1,\dots,D_k,C) = \rho^*(D_1',\dots,D_k',C').
\]
If $D_1\ne D_1'$, we can find a valley $p$ in one of $D_1$ and $D_1'$ that is not contained in the other Dyck path. We can assume $p\in \VV(D_1)$ and $p\not\in D'_1$. Then
we have $p\in \rho^*(D_1,\dots,D_k,C)$ and
$p\not\in\rho^*(D_1',\dots,D'_k,C')$, which is a contradiction. Thus we must have $D_1=D_1'$. Similarly we can show that $D_i=D_i'$ for all $i\in[k]$. Then we also have $C=C'$.
Thus $\rho^*$ is injective. 
\end{proof}

\begin{remark}
In Proposition~\ref{prop:pleasant}, we can also use high peaks instead of valleys.
This was shown for $k=1$ in \cite[Corollary~9.2]{MPP2}.
\end{remark}

\subsection{A modification of Lindstr\"om--Gessel--Viennot lemma}\
\label{sec:modif-lindstr-gess}

Let $\wt$ and $\wtext$ be fixed  weight functions defined on $\ZZ\times\NN$.
We define
\[
\wt_\VV(D) = \prod_{p\in D} \wt(p) \prod_{p\in\mathcal{V}(D)} \wtext(p)
\]
and
\[
\wt_\HP(D) = \prod_{p\in D} \wt(p) \prod_{p\in\HP(D)} \wtext(p).
\]
One can regard $\wt_\VV(D)$ as a weight of a Dyck path $D$ in which every point $p$ of $D$ has the weight $\wt(p)$
and every valley $p$ of $D$ has the extra weight $\wtext(p)$.  For Dyck paths $D_1, \dots, D_k$, we define
\[
\wt_\VV(D_1, \dots, D_k) = \wt_\VV(D_1)\cdots \wt_\VV(D_k).
\]

The next lemma is a modification of Lindstr\"om--Gessel--Viennot lemma. 

\begin{lem}\label{lem:det}
For $1\le i,j\le k$, let $A_i=(-n-2i+2,0)$, $B_j=(n+2j-2,0)$ and
\[
d_n^{i,j}(q) = \sum_{D\in\Dyck(A_i\to B_j)} \wt_\VV(D).
\]
Then 
\begin{equation}\label{eqn:lem_det}
\det(d_{n}^{i,j}(q))_{i,j=1}^k  
=\sum_{(D_1\le \dots \le D_k)\in\Dyck_{2n}^k} \wt_\VV(D_1, \dots, D_k) \prod_{i=1}^{k-1} \prod_{p\in D_i\cap D_{i+1}} \left(1-\frac{1}{\wtext(p)}\right).
\end{equation}
\end{lem}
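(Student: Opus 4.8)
The plan is to run a Lindstr\"om--Gessel--Viennot-type argument, but with extra care because the vertex weights $\wtext$ at valleys are \emph{not} preserved by the usual tail-swapping involution; this failure is exactly what generates the correction factors $1-1/\wtext(p)$. First I would expand the determinant by its definition,
\[
\det(d_n^{i,j}(q))_{i,j=1}^k = \sum_{\sigma\in\Sym_k}\sgn(\sigma)\prod_{i=1}^k\sum_{D_i\in\Dyck(A_i\to B_{\sigma(i)})}\wt_\VV(D_i),
\]
so that the left-hand side becomes a signed sum over all $k$-tuples $(D_1,\dots,D_k)$ with $D_i\in\Dyck(A_i\to B_{\sigma(i)})$, weighted by $\sgn(\sigma)\,\wt_\VV(D_1)\cdots\wt_\VV(D_k)$. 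Since $A_1,\dots,A_k$ and $B_1,\dots,B_k$ are nested on the $x$-axis, the identity permutation corresponds to nested tuples, and the surviving configurations should be the weakly non-intersecting ones.

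Second, I would cancel every tuple in which two of the paths share a common step. For such tuples, swapping the tails of the two paths at a canonically chosen shared step (say the leftmost one, with a fixed rule breaking ties in the pair and in the height) is a sign-reversing involution: it changes $\sigma$ by a transposition and, crucially, it preserves $\wt_\VV$. Indeed, the multiset of points visited by the paths is unchanged, and at the junction the two paths have a common incoming step, so interchanging their outgoing steps leaves the number of valleys at that point invariant. Hence these tuples contribute $0$, and I am reduced to tuples with no shared steps.

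Third, for the tuples with no shared steps I would construct a sign- and weight-preserving bijection $\Psi$ between such tuples and pairs $(D_1\le\dots\le D_k,\,M)$, where $(D_1\le\dots\le D_k)\in\Dyck_{2n}^k$ is weakly non-intersecting and $M\subseteq\bigcup_i(D_i\cap D_{i+1})$ is a set of marked touching points. The map $\Psi$ crosses the nested paths transversally at each marked point while keeping them tangent at the unmarked touching points; at a marked point $p\in D_i\cap D_{i+1}$ -- which is a peak of $D_i$ and a valley of $D_{i+1}$ -- interchanging the two outgoing steps destroys that valley and transposes two endpoints. Thus each marked point contributes a sign $-1$ and removes the valley weight $\wtext(p)$, so the signed weight of $(D_1\le\dots\le D_k,M)$ equals $\wt_\VV(D_1,\dots,D_k)\prod_{p\in M}(-1/\wtext(p))$. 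Summing over all $M$ produces $\prod_i\prod_{p\in D_i\cap D_{i+1}}(1-1/\wtext(p))$, which is precisely the right-hand side of \eqref{eqn:lem_det}.

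The main obstacle is this third step: one must verify that $\Psi$ really is a bijection for all $k$, i.e.\ that every no-shared-step tuple uncrosses in a unique way to a nested tuple together with a marking. The structural fact that makes this work is that, in the absence of shared steps, at most two paths pass through any interior point (the two possible incoming directions force distinct paths to use distinct edges), so all meetings are pairwise and occur between paths adjacent in the nesting order; uncrossing at each transversal crossing then yields a well-defined weakly non-intersecting tuple whose former crossings form $M$. The accompanying valley-weight bookkeeping -- checking that each uncrossing alters $\wt_\VV$ by exactly the factor $\wtext(p)$ and nothing else, and that crossings versus touchings are correctly distinguished -- is the delicate but essentially routine heart of the argument.
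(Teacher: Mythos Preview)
Your proposal is correct and follows essentially the same approach as the paper's proof: both expand the determinant, cancel tuples sharing an edge via a tail-swap involution, and then biject the surviving no-shared-step tuples with weakly nested tuples $(P_1\le\dots\le P_k)$ decorated at each touching point by one of two labels, $1$ or $-1/\wtext(p)$, whose sum produces the factors $1-1/\wtext(p)$. The only differences are cosmetic---you swap at the leftmost shared edge while the paper uses the rightmost, and you describe the bijection in the direction nested$+$marking $\to$ crossed whereas the paper goes crossed $\to$ nested$+$labeling.
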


Note that if $\wt$ and $\wtext$ depend only on the $y$-coordinates, i.e.,
$\wt(a,b)=f(b)$ and $\wtext(a,b)=g(b)$ for some functions $f$ and $g$, then 
$d_{n}^{i,j}(q)$ can be written as $d_{n+i+j-2}(q)$, where
\[
d_n(q) = \sum_{D\in\Dyck_{2n}} \wt_\VV(D).
\]

\begin{proof}
We have
\[
\det(d_{n}^{i,j}(q))_{i,j=1}^k =\sum_{(D_1,\dots,D_k)\in T} w(D_1,\dots,D_k),
\]
where $T$ is the set of $k$-tuples $(D_1,\dots,D_k)$ of Dyck paths such that
$D_i\in\Dyck(A_i\to B_{\pi_i})$ for some permutation $\pi\in \Sym_k$ and
$w(D_1,\dots,D_k) = \sgn(\pi)  \wt_\VV(D_1, \dots, D_k) $.
Let $T_1$ be the subset of $T$ consisting of $(D_1,\dots,D_k)$ such that
$D_i$ and $D_j$ do not share any common step for all $1\le i<j\le k$.

Suppose that $(D_1,\dots,D_k)\in T\setminus T_1$.  Then we can find the lexicographically largest index $(i,j)$ such that $D_i$ and $D_j$ have a common step. Let $p_1$ and $p_2$ be the rightmost two consecutive points contained in $D_i$ and $D_j$. 
Let $D_i'$ and $D_j'$ be the Dyck paths obtained from $D_i$ and $D_j$, respectively, by exchanging the points after $p_2$.
For $t\ne i,j$, let $D_t'=D_t$. Then the map $(D_1,\dots,D_k)\mapsto (D_1',\dots,D_k')$ is a sign-reversing involution on $T\setminus T_1$ with no fixed points. Therefore, we have
\[
\sum_{(D_1,\dots,D_k)\in T} w(D_1,\dots,D_k) = \sum_{(D_1,\dots,D_k)\in T_1} w(D_1,\dots,D_k).
\]

Now consider $(D_1,\dots,D_k)\in T_1$. By the condition on the elements in $T_1$, we can find a unique $k$-tuple
$(P_1\le \dots\le P_k)\in \Dyck_{2n}^k$ such that the steps in $D_1,\dots,D_k$ are the same as the steps in $P_1,\dots,P_k$.
Note that we also have $(P_1, \dots, P_k)\in T_1$. 
We define the labeling $L$ of the intersection points in $P_1,\dots,P_k$ as follows. Let $p$ be an intersection point in $P_1,\dots,P_k$. 
Then $p$ is also an intersection point of two paths in $(D_1,\dots,D_k)$. Let $D_r$ and $D_s$ be the Dyck paths containing $p$. 
Since $P_1\le \dots\le P_k$ and $(P_1, \dots, P_k)\in T_1$, there is a unique $i$ for which $p$ is contained in $P_i$ and $P_{i+1}$.
Then $p$ is both a peak of $P_{i}$ and a valley of $P_{i+1}$. On the other hand, $p$ may or may not be a peak or a valley in $D_r$ or $D_s$. 
Observe that $p$ is a peak (resp.~valley) of $D_r$ if and only if it is a valley (resp.~peak) of $D_s$. 
We define the label $L(p)$ of $p$ by 
\[
L(p) =
\begin{cases}
  1 & \mbox{if $p$ is a valley of $D_r$ or $D_s$,}\\
-1/\wtext(p) & \mbox{otherwise.}
\end{cases}
\]
For example, if $D_1$ and $D_2$ are the paths in Figure~\ref{fig:D1D2}, the paths $P_1$ and $P_2$ and the labeling $L$ are determined as shown  in Figure~\ref{fig:P1P2}.
\begin{figure}
  \centering
\includegraphics{./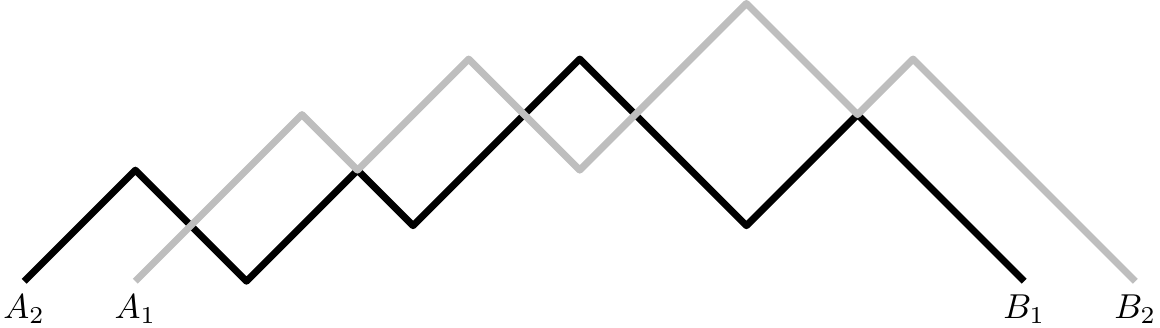}  
  \caption{An example of $(D_1,D_2)\in T_1$ in the proof of Lemma~\ref{lem:det}. The gray path is $D_1$ and the black path is $D_2$.}
  \label{fig:D1D2}
\end{figure}
\begin{figure}
  \centering
\includegraphics{./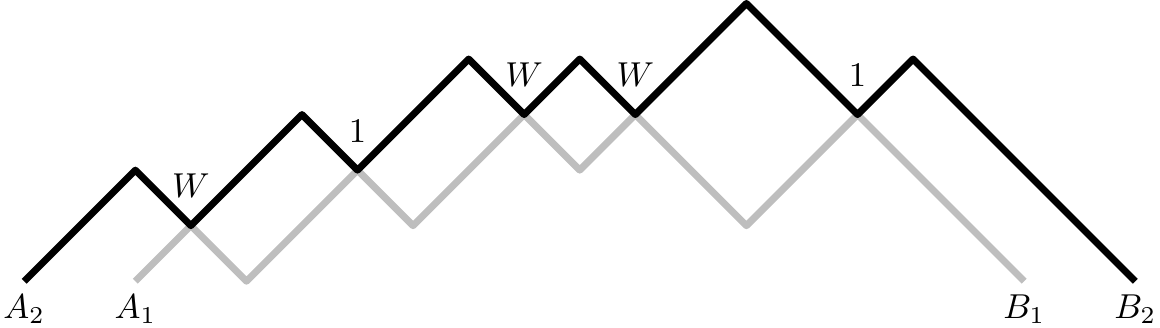}  
  \caption{The gray path is $P_1$ and the black path is $P_2$. The letter $W$ above an intersection point $p$ means that $L(p)=-1/\wtext(p)$.}
  \label{fig:P1P2}
\end{figure}

Let $w'(P_1,\dots,P_k,L)$ be the product of 
$\wt_\VV(D_1, \dots, D_k)$ and the labels $L(p)$ of all intersection points $p$. It is not hard to see that
$w(D_1,\dots,D_k)=w'(P_1,\dots,P_k,L)$. Moreover, given $(P_1\le\cdots\le P_k)\in \Dyck_{2n}^k$ and
any labeling $L$ of their intersection points, there is a unique $(D_1,\dots,D_k)\in T_1$ which makes
$P_1,\dots,P_k$ and the labeling $L$. Thus, 
\[
\sum_{(D_1,\dots,D_k)\in T_1} w(D_1,\dots,D_k) 
=\sum_{P_1,\dots,P_k,L} w'(P_1,\dots,P_k,L),
\]
where the sum is over all $(P_1\le\cdots\le P_k)\in \Dyck_{2n}^k$ and all possible labelings $L$ of their intersection points.
Since
\[
\sum_{P_1,\dots,P_k,L} w'(P_1,\dots,P_k,L) = \sum_{(D_1\le \dots \le D_k)\in\Dyck_{2n}^k} \wt_\VV(D_1, \dots, D_k) \prod_{i=1}^{k-1} \prod_{p\in D_i\cap D_{i+1}} \left(1-\frac{1}{\wtext(p)}\right),
\]
we obtain the desired identity.
\end{proof}

\begin{remark}\label{rmk:LGV}
Lindstr\"om--Gessel--Viennot lemma \cite{Lindstrom,GesselViennot}
expresses a determinant as a sum over non-intersecting lattice paths. In our case, due to the extra weights on the valleys, the paths which have common points are not completely cancelled. Therefore the right-hand side of \eqref{eqn:lem_det} is a sum over \emph{weakly} non-intersecting lattice paths.
If $\wtext(p)=1$ for all points $p$, then Lemma~\ref{lem:det} reduces to Lindstr\"om--Gessel--Viennot lemma.
\end{remark}

\subsection{Weakly and strictly non-intersecting Dyck paths}\
\label{sec:conn-betw-weakly}

For a constant $c$ and a Schr\"oder path $S$, we define 
\[
\wt_\Sch(c,S) = c^{\hs(S)} \prod_{p\in S} \wt(p),
\]
where $\hs(S)$ is the number of horizontal steps in $S$. In other words, we give extra weight $c$ to each horizontal step.

For a Schr\"oder path $S$, we define $\phi_\VV(S)$ (resp. $\phi_\HP(S)$) to be the Dyck path obtained from $S$ by changing each horizontal step 
to a down step followed by an up step (an up step followed by a down step).
For a Dyck path $D$, we denote by $\phi_\VV^{-1}(D)$ (resp.~$\phi_\HP^{-1}(D)$) the inverse image of $D$, i.e., the set of all Schr\"oder paths $S$ with
$\phi_\VV(S)=D$ (resp.~$\phi_\HP(S)=D$). 

Note that for a given Dyck path $D$, there can be several Schr\"oder paths $S$ mapping to $D$ via $\phi_\VV$ or $\phi_\HP$. The following proposition tells us that if $\wt(p)\left( \wtext(p)-1 \right)$ is constant, then the weight $\wt_\VV(D)$ or $\wt_\HP(D)$ is equal 
to the sum of weights of such Schr\"oder paths. 

\begin{prop}\label{prop:MPP_weight}
Suppose that the weight functions $\wt$ and $\wtext$ satisfy
$\wt(p)\left( \wtext(p)-1 \right)=c$ for all $p\in\ZZ\times\NN$. Then, for every Dyck path $D$, we have
\[
\wt_\VV(D) = \sum_{S\in \phi_\VV^{-1}(D)} \wt_\Sch(c,S) \qand \wt_\HP(D) = \sum_{S\in \phi_\HP^{-1}(D)} \wt_\Sch(c,S).
\]
\end{prop}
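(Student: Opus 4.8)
The plan is to fix a Dyck path $D$ and, for each of the two identities, set up an explicit bijection between the fiber over $D$ and the subsets of the relevant set of special points, and then match weights term by term. First I would observe that $\phi_\VV$ alters only the horizontal steps of a Schr\"oder path, leaving every up and down step untouched: a horizontal step at height $i$ becomes a down step followed by an up step whose middle point is a valley at height $i-1$. Reversing this locally, a Schr\"oder path $S$ with $\phi_\VV(S)=D$ is determined by a binary choice at each valley $v\in\VV(D)$, namely whether $v$ is a genuine down-up valley of $S$ or is lifted to a horizontal step one unit above it. Lifting a valley at height $y$ produces a horizontal step at height $y+1\ge1$, so every such choice gives a legitimate little Schr\"oder path. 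Since conversions at distinct valleys involve disjoint steps, they can be made independently, and hence $\phi_\VV^{-1}(D)$ is in bijection with the subsets $T\subseteq\VV(D)$, where $T$ records the lifted valleys.

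Next I would compute $\wt_\Sch(c,S_T)$ for the path $S_T$ attached to $T$. Lifting a valley $v$ deletes exactly the point $v$ from the path, its two neighbors surviving as the endpoints of the new horizontal step, and creates one horizontal step; therefore $\hs(S_T)=|T|$ and $\prod_{p\in S_T}\wt(p)=\prod_{p\in D}\wt(p)\prod_{v\in T}\wt(v)^{-1}$. This gives
\[
\wt_\Sch(c,S_T)=\prod_{p\in D}\wt(p)\prod_{v\in T}\frac{c}{\wt(v)},
\]
and summing over all $T\subseteq\VV(D)$ factors as a product over valleys,
\[
\sum_{S\in\phi_\VV^{-1}(D)}\wt_\Sch(c,S)=\prod_{p\in D}\wt(p)\prod_{v\in\VV(D)}\left(1+\frac{c}{\wt(v)}\right).
\]
The hypothesis $\wt(p)\left(\wtext(p)-1\right)=c$ yields $1+c/\wt(v)=\wtext(v)$, so the right-hand side collapses to $\prod_{p\in D}\wt(p)\prod_{v\in\VV(D)}\wtext(v)=\wt_\VV(D)$, as desired.

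The high-peak identity follows the same three steps, with one extra wrinkle that I expect to be the only genuinely delicate point rather than any computation. Under $\phi_\HP$ a horizontal step at height $i$ becomes an up-down pair with a peak at height $i+1$; reversing this, a peak at height $y$ can be lowered to a horizontal step at height $y-1$ only when $y-1\ge1$, that is, only when the peak is \emph{high}. Thus the little Schr\"oder condition forbids lowering low peaks, and $\phi_\HP^{-1}(D)$ is in bijection with the subsets of $\HP(D)$ rather than of all peaks. With this restriction the weight bookkeeping is identical to the valley case, and summing over $T\subseteq\HP(D)$ together with the same application of the hypothesis produces $\wt_\HP(D)$. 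The main obstacle is therefore the careful verification that the Schr\"oder-path constraint matches high peaks exactly, so that the product ranges precisely over $\HP(D)$ and the resulting factor is $\wtext$ on the high peaks.
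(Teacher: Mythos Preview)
Your proposal is correct and follows essentially the same approach as the paper: parametrize $\phi_\VV^{-1}(D)$ (resp.\ $\phi_\HP^{-1}(D)$) by subsets of $\VV(D)$ (resp.\ $\HP(D)$), factor the resulting sum as $\prod_{p\in D}\wt(p)\prod_{v}\bigl(1+c/\wt(v)\bigr)$, and use the hypothesis to collapse each factor to $\wtext(v)$. Your explicit justification of why only \emph{high} peaks can be lowered (to avoid a horizontal step on the $x$-axis) is a point the paper states without comment, so your version is, if anything, slightly more careful there.
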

\begin{proof}
The elements in $\phi_\VV^{-1}(D)$ (resp.~$\phi_\HP^{-1}(D)$) are those obtained from $D$
by doing the following: for each point $p$ in $\VV(D)$ (resp.~$\HP(D)$), either do nothing or remove $p$. 
Here, if $p$ is in $\VV(D)$ (resp.~$\HP(D)$), removing $p$ can be understood as replacing the pair of a down step and an up step (resp.~an up step and a down step) at $p$ by a horizontal step. Thus
\begin{align*}
\sum_{S\in \phi_\VV^{-1}(D)} \wt_\Sch(c,S) &= \prod_{v\in\mathcal{V}(D)} (\wt(v) + c) \prod_{p\in D\setminus\mathcal{V}(D)} \wt(p) \\
	&= \prod_{v\in\mathcal{V}(D)} \left( \frac{\wt(v) + c}{\wt(v)} \right) \prod_{p\in D} \wt(p) \\
	&= \prod_{v\in\mathcal{V}(D)} \left(1 + \frac{\wt(v)(\wtext(v)-1)}{\wt(v)} \right) \prod_{p\in D} \wt(p) \\
	&= \prod_{v\in\mathcal{V}(D)} \wtext(v) \prod_{p\in D} \wt(p)\\
	&= \wt_\VV(D).
\end{align*}

The second identity can be proved in the same way using high peaks instead of valleys.
\end{proof}

The following proposition is the key ingredient for the proofs of Theorems~\ref{conj:9.3} and \ref{conj:9.6}.

\begin{prop} \label{prop:MPP_key1}
Suppose that the weight functions $\wt$ and $\wtext$ satisfy
$\wt(p)\left( \wtext(p)-1 \right)=c$ for all $p\in\ZZ\times\NN$. 
Let $A\in\Dyck_{2n}$ and $B\in\Dyck_{2n+8}$ be fixed Dyck paths with $A<B$. 
Then
\begin{multline*}
\sum_{(A\le D<B)\in\Dyck_{2n}^3} \wt_\VV(D) \prod_{p\in A\cap D} \left( 1 - \frac{1}{\wtext(p)} \right) \\
= \sum_{(A<D\le B)\in\Dyck_{2n}^3} \wt_\HP(D) \prod_{p\in D\cap B} \left( 1 - \frac{1}{\wtext(p)} \right).
\end{multline*}
\end{prop}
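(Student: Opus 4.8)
The plan is to translate both sides into generating functions for weighted Schr\"oder paths via Proposition~\ref{prop:MPP_weight} and then to exhibit a weight-preserving bijection between the two resulting families of paths. The hypothesis $\wt(p)(\wtext(p)-1)=c$ is exactly what makes this reduction possible.

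First I would pin down the forced structure of the touching points. On the left-hand side, if $A\le D$ and $p\in A\cap D$, then $D$ lies weakly above $A$ and meets it at $p$; since $A\le D$ forbids shared steps, $D$ must descend into $p$ and ascend out of it, so every $p\in A\cap D$ is a valley of $D$. Dually, for $A<D\le B$ every $p\in D\cap B$ is a peak of $D$, and the strict inequality $A<D$ forces $D(i)\ge A(i)+1\ge 1$ on the range of $A$ (and $\ge 2$ on the two extra columns on each side), which excludes peaks of height $1$; hence every $p\in D\cap B$ is in fact a \emph{high} peak, so that the whole of $\HP(D)$ is available. Using $\wt(p)(\wtext(p)-1)=c$, a touching valley contributes $\wt(p)\wtext(p)(1-1/\wtext(p))=c$ to the left-hand summand, and a touching high peak contributes the same $c$ to the right-hand summand. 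Thus in both sums the touching points behave exactly like forced horizontal steps of weight $c$, while the remaining valleys (resp.~high peaks) are free.

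Next I would apply Proposition~\ref{prop:MPP_weight}. On the left, expanding $\wt_\VV(D)=\sum_{S\in\phi_\VV^{-1}(D)}\wt_\Sch(c,S)$ and summing over all free valleys turns the left-hand side into $\sum_{S}\wt_\Sch(c,S)$, where $S$ ranges over the family of Schr\"oder paths with $\phi_\VV(S)<B$ whose contacts with $A$ are horizontal steps; a short check (no two consecutive horizontal midpoints, and $D=\phi_\VV(S)$ satisfies $A\le D$) identifies this family as $\mathcal{L}=\{S:\ A<S,\ \phi_\VV(S)<B\}$. Symmetrically, using $\wt_\HP$ and $\phi_\HP$ together with the fact that all peaks are high, the right-hand side becomes $\sum_{S}\wt_\Sch(c,S)$ over $\mathcal{R}=\{S:\ \phi_\HP(S)>A,\ S<B\}$. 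Both families consist of Schr\"oder paths running in the strip between the fixed boundaries $A$ and $B$; the only difference is that a path in $\mathcal{L}$ may use horizontal steps touching the \emph{upper} boundary $B$ while staying strictly above $A$, whereas a path in $\mathcal{R}$ may use horizontal steps touching the \emph{lower} boundary $A$ while staying strictly below $B$.

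It then remains to prove $\sum_{S\in\mathcal{L}}\wt_\Sch(c,S)=\sum_{S\in\mathcal{R}}\wt_\Sch(c,S)$, and this is where essentially all the work lies. I would construct an explicit weight-preserving bijection $\Phi:\mathcal{L}\to\mathcal{R}$ that transfers boundary contacts from $A$ to $B$, processing the path from left to right and locally rerouting each maximal portion that hugs one boundary so that it hugs the other, while keeping the number of horizontal steps fixed. The delicate point is that $\wt_\Sch(c,\cdot)$ records the actual lattice points visited, so the local moves must be engineered to preserve exactly the weight data that enters the identity; for the height-only weights arising in the applications to Theorems~\ref{conj:9.3} and \ref{conj:9.6} this amounts to preserving the multiset of heights together with the horizontal-step count. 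Checking that these local moves are mutually inverse, respect the strict/weak boundary conditions defining $\mathcal{L}$ and $\mathcal{R}$, and patch together into a single global bijection is the main obstacle, and I expect it to require a careful case analysis near the two boundaries.
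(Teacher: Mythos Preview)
Your reduction of both sides to weighted Schr\"oder path sums is correct and is exactly the paper's approach; your sets $\mathcal{L}$ and $\mathcal{R}$ are the right objects. The gap is that you then embark on constructing a bijection $\Phi:\mathcal{L}\to\mathcal{R}$, when in fact $\mathcal{L}=\mathcal{R}$ already, so no bijection is needed and the proof is finished at that point.

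The key observation you are missing is a parity argument. For any Dyck path $D\in\Dyck_{2n+4}$ and the given $B\in\Dyck_{2n+8}$, the difference $B(i)-D(i)$ has constant parity in $i$ (each step changes it by $0$ or $\pm2$), and at $i=-n-2$ it equals $2$; hence $B(i)-D(i)$ is always even. Thus $D<B$ forces $B(i)-D(i)\ge 2$, so replacing any valley of $D$ by a horizontal step (which raises the height by $1$ at one point) still yields $S<B$. It follows that $\mathcal{L}=\{S:A<S,\ \phi_\VV(S)<B\}=\{S:A<S<B\}$. The symmetric parity argument with $A\in\Dyck_{2n}$ and $\phi_\HP(S)\in\Dyck_{2n+4}$ shows $\mathcal{R}=\{S:A<S<B\}$ as well. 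This common set is exactly the paper's $Z$, and both sides of the proposition equal $\sum_{S\in Z}\wt_\Sch(c,S)$.

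In particular, your informal descriptions of $\mathcal{L}$ and $\mathcal{R}$ as families that ``may use horizontal steps touching'' one boundary are incorrect: the parity constraint prevents any Schr\"oder path in either family from touching $A$ or $B$ at all. The elaborate case analysis you anticipate near the boundaries is therefore a phantom difficulty; the proof is already complete once you reach $\mathcal{L}$ and $\mathcal{R}$. Note also that the proposition is stated for arbitrary $\wt$ satisfying $\wt(p)(\wtext(p)-1)=c$, so a bijection that merely preserves the multiset of heights (as you suggest for the applications) would not suffice for the statement as written, whereas the identity $\mathcal{L}=\mathcal{R}$ handles the general case directly.
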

\begin{proof}
Let 
\begin{align*}
X&=\{D\in\Dyck_{2n+4}:A\le D<B\},  \\
Y&=\{D\in\Dyck_{2n+4}:A< D\le B\},\\
Z&=\{S\in\Sch_{2n+4}:A< S< B\}.
\end{align*}
It suffices to show the following identities
\begin{equation}
  \label{eq:XZ}
\sum_{S\in Z}  \wt_\Sch(c,S)
=\sum_{D\in X}  \wt_\VV(D) \prod_{p\in A\cap D} \left( 1 - \frac{1}{\wtext(p)} \right)
\end{equation}
and
\begin{equation}
  \label{eq:YZ}
\sum_{S\in Z}  \wt_\Sch(c,S)
=\sum_{D\in Y}  \wt_\HP(D) \prod_{p\in D\cap B} \left( 1 - \frac{1}{\wtext(p)} \right).
\end{equation}

Note that if $D\in Y$, every peak of $D$ is a high peak. 
It is easy to see that for all $S\in Z$, we have $\phi_\VV(S)\in X$ and $\phi_\HP(S)\in Y$.
Consider the restrictions $\phi_\VV |_Z:Z\to X$ and $\phi_\HP |_Z:Z\to Y$, which are surjective.
Observe that for $D\in X$ (resp.~$D\in Y$), we have $A\cap D\subset \VV(D)$ (resp.~$D\cap B\subset \HP(D)$).
The elements in $(\phi_\VV|_Z)^{-1}(D)$ (resp.~$(\phi_\HP|_Z)^{-1}(D)$) are those obtained from $D$
by doing the following: for each point $p$ in $\VV(D)\setminus A$ (resp.~$\HP(D)\setminus B$), either do nothing or remove $p$,
and for each point $p$ in $A\cap D$ (resp.~$D\cap B$), remove $p$.

Now we prove \eqref{eq:XZ}. Let $D\in X$. 
By the same arguments used in the proof of Proposition~\ref{prop:MPP_weight}, we have
\begin{align*}
\sum_{S\in (\phi_\VV|_Z)^{-1}(D)} \wt_\Sch(c,S) &=  \prod_{p\in D\setminus\mathcal{V}(D)} \wt(p)  \prod_{v\in\mathcal{V}(D)\setminus A} (\wt(v) + c) 
\prod_{p\in A\cap D} c\\
&=\prod_{p\in D\setminus\mathcal{V}(D)} \wt(p)  \prod_{v\in\mathcal{V}(D)} \wt(v)\wtext(v)
\prod_{p\in A\cap D} \frac{\wt(p)\left( \wtext(p)-1 \right)}{\wt(p)\wtext(p)}\\
	&= \wt_\VV(D) \prod_{p\in A\cap D} \left( 1 - \frac{1}{\wtext(p)} \right).
\end{align*}
Thus, we obtain
\[
\sum_{S\in Z}  \wt_\Sch(c,S)
=\sum_{D\in X} \sum_{S\in (\phi_\VV|_Z)^{-1}(D)} \wt_\Sch(c,S) 
=\sum_{D\in X}  \wt_\VV(D) \prod_{p\in A\cap D} \left( 1 - \frac{1}{\wtext(p)} \right),
\]
which is \eqref{eq:XZ}. 
The second identity~\eqref{eq:YZ} can be proved similarly. 
\end{proof}

If $\wt$ and $\wtext$ satisfy certain conditions, we have a connection
between weakly non-intersecting Dyck paths and strictly non-intersecting Dyck paths as follows. 

\begin{prop} \label{prop:MPP_key2}
Suppose that $\wt$ and $\wtext$ satisfy the following conditions
\begin{itemize}
\item $\wt(p)\left( \wtext(p)-1 \right)=c$ for all $p\in\ZZ\times\NN$, and 
\item $\wt_\HP(D) = t_j \wt_\VV(D)$ for all $D\in\Dyck_{2j}$ such that every peak in $D$ is a high peak.
\end{itemize}
Then we have
\begin{multline*}
\sum_{(D_1\le \cdots \le D_k)\in\Dyck_{2n}^k} \wt_\VV(D_1, \dots, D_k) \prod_{i=1}^{k-1} \prod_{p\in D_i\cap D_{i+1}} \left( 1 - \frac{1}{\wtext(p)} \right) \\
= \prod_{i=1}^{k-1} t_{n+2i}^{i} \sum_{(D_1 < \cdots < D_k)\in\Dyck_{2n}^k} \wt_\VV(D_1, \dots, D_k).
\end{multline*}
\end{prop}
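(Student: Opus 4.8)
The plan is to prove the identity by resolving the $k-1$ weak adjacencies $D_i\le D_{i+1}$ one at a time, using Proposition~\ref{prop:MPP_key1} as the basic move. A single application of Proposition~\ref{prop:MPP_key1} to a ``middle'' path (one with a weak neighbor below and a strict neighbor above) replaces the weak constraint below it by a strict one, pushes a new weak constraint to the pair above it, and changes that path's weight from $\wt_\VV$ to $\wt_\HP$. Thus each weak constraint can be \emph{bubbled upward} through the tuple. To push a weak constraint off the very top I would apply Proposition~\ref{prop:MPP_key1} with $B$ taken to be the highest Dyck path in the relevant $\Dyck_{2(n+2k)}$: since this $B$ lies strictly above every competing path and shares no point with it, the constraint $D<B$ is automatic and the correction product over $D\cap B$ is empty, so the weak constraint simply vanishes.

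First I would fix the order of operations: process the weak adjacencies from the top downward. Before bubbling the $m$-th adjacency, the configuration is $D_1\le\cdots\le D_{k-m+1}<D_{k-m+2}<\cdots<D_k$ with every path carrying weight $\wt_\VV$; the topmost weak pair is then $(D_{k-m},D_{k-m+1})$. I would apply Proposition~\ref{prop:MPP_key1} repeatedly to the middle paths $D_{k-m+1},D_{k-m+2},\dots,D_k$ in turn, each step moving the weak constraint up one level, until it is bubbled off the top. Every time the constraint moves past a path $D_j$, that path acquires weight $\wt_\HP$ and has just become strictly above its lower neighbor $D_{j-1}$; at that moment I convert its weight back to $\wt_\VV$.

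The key geometric point, and the step I expect to be the main obstacle, is to justify each such conversion $\wt_\HP(D_j)=t_{n+2j-2}\wt_\VV(D_j)$ through the second hypothesis, which requires that \emph{every peak of $D_j$ is a high peak}. I would prove this as a small lemma from the strict nesting $D_{j-1}<D_j$: writing $U=D_j\in\Dyck_{2p}$ and $L=D_{j-1}\in\Dyck_{2p-4}$, the inequality $L<U$ forces $U(i)\ge1$ on the whole domain of $L$, while on the two length-two overhangs of $U$ a parity argument forces the path to run monotonically $0,1,2$ and $2,1,0$; hence $U$ has no peak of height $1$. Since $D_j\in\Dyck_{2(n+2j-2)}$, the relevant constant is $t_{n+2j-2}$, which matches the indexing $t_{n+2i}$ with $i=j-1$.

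Finally I would collect the constants. Bubbling the $m$-th adjacency off the top crosses the $m$ paths $D_{k-m+1},\dots,D_k$ and so contributes $\prod_{j=k-m+1}^{k}t_{n+2j-2}$; summing over $m=1,\dots,k-1$, the path $D_j$ is crossed exactly $j-1$ times, so the total constant is $\prod_{j=2}^{k}t_{n+2j-2}^{\,j-1}=\prod_{i=1}^{k-1}t_{n+2i}^{\,i}$, and once all adjacencies are removed the configuration is $D_1<\cdots<D_k$ with every weight $\wt_\VV$, which is the right-hand side. The points requiring care throughout are that each invocation of Proposition~\ref{prop:MPP_key1} is genuinely legal — that at the moment of use the middle path has a weak neighbor below and a strict neighbor above, and that all other paths are held fixed so that the proposition applies to the single inner sum — and that the correction factor being bubbled always coincides with the $\prod_{p\in A\cap D}(1-1/\wtext(p))$ on the left-hand side of Proposition~\ref{prop:MPP_key1}.
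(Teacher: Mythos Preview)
Your proposal is correct and follows essentially the same approach as the paper: introduce the highest Dyck path in $\Dyck_{2n+4k}$ as a virtual $D_{k+1}$, then repeatedly use Proposition~\ref{prop:MPP_key1} to swap an adjacent pair $(\le,<)$ into $(<,\le)$, converting the resulting $\wt_\HP$ back to $\wt_\VV$ via the second hypothesis and collecting the factors $t_{n+2i}$. The only cosmetic difference is the order of the swaps (you accumulate strict inequalities from the top down, the paper from the bottom up), and you spell out the high-peak verification that the paper leaves implicit.
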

\begin{proof}
Throughout this proof, we assume that $D_i\in \Dyck_{2n+4i-4}$ for $i\in[k]$
and $D_{k+1}$ is fixed to be the highest Dyck path in $\Dyck_{2n+4k}$ consisting of $n+2k$ up steps followed by $n+2k$ down steps. For a sequence $(\prec_1,\dots,\prec_k)$ of inequalities $<$ and $\le$, let
\[
g(\prec_1,\dots,\prec_k) = \sum_{D_1\prec_1 D_2 \prec_2\cdots \prec_k D_{k+1}} \wt_\VV(D_1, \dots, D_k) \prod_{i=1}^{k-1} \prod_{p\in D_i\cap D_{i+1}} \left( 1 - \frac{1}{\wtext(p)} \right).
\]
Note that we always have
\[
g(\prec_1,\dots,\prec_{k-1},<) = g(\prec_1,\dots,\prec_{k-1},\le).
\]
The identity in this proposition can be restated as
\begin{equation}
  \label{eq:8}
g(\le,\dots,\le)  = \prod_{i=1}^{k-1} t_{n+2i}^{i} \cdot g(<,\dots,<) .
\end{equation}

Suppose that $D_{i-1}$ and $D_{i+1}$ are fixed, where $2\le i\le k$. Then, by Proposition~\ref{prop:MPP_key1} and the assumptions on $\wt$ and $\wtext$, we have
\begin{multline*}
\sum_{D_{i-1}\le D_i<D_{i+1}} \wt_\VV(D_i) \prod_{p\in D_{i-1}\cap D_i} \left( 1 - \frac{1}{\wtext(p)} \right)\\
= t_{n-2+2i} \sum_{D_{i-1}< D_i\le D_{i+1}} \wt_\VV(D_i) \prod_{p\in D_{i}\cap D_{i+1}} \left( 1 - \frac{1}{\wtext(p)} \right).
\end{multline*}
This implies that for any $2\le i\le k$ and
$\prec_1,\dots,\prec_{i-2},\prec_{i+1},\dots,\prec_k$, 
\[
g(\prec_1,\dots,\prec_{i-2},\le,<,\prec_{i+1},\dots,\prec_k) = 
t_{n+2i-2}\cdot g(\prec_1,\dots,\prec_{i-2},<,\le,\prec_{i+1},\dots,\prec_k).
\]
Therefore,
\begin{align*}
g(\le,\dots,\le)  &= g(\le,\dots,\le,<)
= t_{n+2k-2}\cdot g(\le,\dots,\le,<,\le)\\
&= t_{n+2k-2}t_{n+2k-4}\cdot g(\le,\dots,\le,<,\le,\le)\\
&= \cdots \\
& = \prod_{i=1}^{k-1} t_{n+2i} \cdot g(<,\le,\dots,\le).
\end{align*}
By applying this process repeatedly we obtain \eqref{eq:8}. 
\end{proof}

\subsection{Proof of Theorem~\ref{conj:9.3}}
\label{sec:proof-th1}

Recall the first conjecture of Morales, Pak and Panova (Theorem~\ref{conj:9.3}) :
\begin{equation}
  \label{eq:6}
p(\delta_{n+2k}/\delta_n) = 2^{\binom k2} \det \left( \mathfrak{s}_{n-2+i+j} \right)_{i,j=1}^k,
\end{equation}
where $\mathfrak{s}_{n} = p(\delta_{n+2}/\delta_n)$.
By Proposition~\ref{prop:pleasant},
\begin{align*}
p(\delta_{n+2k}/\delta_n) &= \sum_{(D_1< \dots < D_k)\in\Dyck_{2n}^k} 
\prod_{i=1}^k 2^{|D_1\setminus \VV(D_1)|}\\
&= 2^{k(2n-3)+4\binom{k+1}2}  
\sum_{(D_1< \dots < D_k)\in\Dyck_{2n}^k} 
\left(\frac{1}{2}\right)^{v(D_1)+\dots+v(D_k)}
\end{align*}
and
\[
\mathfrak{s}_{n} = p(\delta_{n+2}/\delta_n) = 2^{2n+1}\sum_{D\in\Dyck_{2n}} \left(\frac 12\right)^{v(D)}.
\]
Let 
\[
d_n(q) = \sum_{D\in\Dyck_{2n}}q^{v(D)}
\]
and 
\[
d_{n,k}(q)=\sum_{(D_1<D_2<\dots<D_k)\in \Dyck_{2n}^k}q^{v(D_1)+\dots+v(D_k)}.
\]
Then \eqref{eq:6} can be rewritten as 
\[
2^{k(2n-3)+4\binom{k+1}2} d_{n,k}(1/2)=
2^{\binom k2} \det(2^{2n-3+2i+2j}d_{n+i+j-2}(1/2))_{i,j=1}^k
\]
or
\[
2^{-\binom k2} d_{n,k}(1/2)= \det(d_{n+i+j-2}(1/2))_{i,j=1}^k.
\]
Thus Theorem~\ref{conj:9.3} is obtained from the following theorem by substituting $q=1/2$.
\begin{thm}\label{thm:MPP1_restated}
For $n,k\ge1$, we have
\[
\det(d_{n+i+j-2}(q))_{i,j=1}^k = q^{\binom k2} d_{n,k}(q).
\]
\end{thm}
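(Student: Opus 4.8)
The plan is to specialize the weight machinery of Lemma~\ref{lem:det} and Proposition~\ref{prop:MPP_key2} to the constant weights $\wt(p)=1$ and $\wtext(p)=q$. With this choice one has $\wt_\VV(D)=\prod_{p\in D}1\cdot\prod_{p\in\VV(D)}q=q^{v(D)}$, so that $\sum_{D\in\Dyck_{2n}}\wt_\VV(D)=d_n(q)$ and, more generally, $\sum_{(D_1<\dots<D_k)\in\Dyck_{2n}^k}\wt_\VV(D_1,\dots,D_k)=d_{n,k}(q)$. Since $\wt$ and $\wtext$ depend only on the $y$-coordinate (indeed they are constant), the remark following Lemma~\ref{lem:det} gives $d_n^{i,j}(q)=d_{n+i+j-2}(q)$, so the matrix in the statement is exactly $(d_n^{i,j}(q))_{i,j=1}^k$. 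Thus Lemma~\ref{lem:det} rewrites the left-hand determinant as
\[
\det(d_{n+i+j-2}(q))_{i,j=1}^k=\sum_{(D_1\le\dots\le D_k)\in\Dyck_{2n}^k}\wt_\VV(D_1,\dots,D_k)\prod_{i=1}^{k-1}\prod_{p\in D_i\cap D_{i+1}}\left(1-\frac{1}{\wtext(p)}\right),
\]
a sum over weakly non-intersecting Dyck paths.

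Next I would verify the two hypotheses of Proposition~\ref{prop:MPP_key2}. The first, $\wt(p)(\wtext(p)-1)=c$, holds with $c=q-1$. For the second I use the standard fact that every nonempty Dyck path has exactly one more peak than valley. Hence, if $D\in\Dyck_{2j}$ has the property that every peak is a high peak, then $|\HP(D)|$ equals the total number of peaks, which is $v(D)+1$; consequently
\[
\wt_\HP(D)=q^{|\HP(D)|}=q^{\,v(D)+1}=q\cdot q^{v(D)}=q\,\wt_\VV(D),
\]
so the second hypothesis holds with $t_j=q$ for every $j$ (all indices arising below are $\ge 3$, so the degenerate empty path is irrelevant). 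Proposition~\ref{prop:MPP_key2} then converts the weakly non-intersecting sum into a strictly non-intersecting one at the cost of the prefactor $\prod_{i=1}^{k-1}t_{n+2i}^{\,i}=\prod_{i=1}^{k-1}q^{\,i}=q^{\binom k2}$. Since the resulting strictly non-intersecting sum equals $d_{n,k}(q)$, this yields $\det(d_{n+i+j-2}(q))_{i,j=1}^k=q^{\binom k2}d_{n,k}(q)$, as desired.

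The argument is essentially a clean application of the two technical results already established, so I do not expect a serious obstacle; the real work was carried out in Lemma~\ref{lem:det} and Proposition~\ref{prop:MPP_key2}. The one point that makes everything fit together is the peak--valley count: because a Dyck path always has one more peak than valley, the quantity $t_j$ becomes independent of $j$ (equal to $q$), which is exactly what makes the prefactor telescope to the clean power $q^{\binom k2}$. I would therefore pay closest attention to confirming this count and to the bookkeeping identity $d_n^{i,j}(q)=d_{n+i+j-2}(q)$, checking that the index shifts $A_i=(-n-2i+2,0)$, $B_j=(n+2j-2,0)$ of Lemma~\ref{lem:det} are consistent with the $\Dyck_{2n}^k$ convention used to define $d_{n,k}(q)$.
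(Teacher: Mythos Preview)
Your proposal is correct and follows essentially the same approach as the paper: specialize the weights to $\wt(p)=1$, $\wtext(p)=q$, apply Lemma~\ref{lem:det} to express the determinant as a weakly non-intersecting sum, then invoke Proposition~\ref{prop:MPP_key2} with $t_j=q$ (verified via the peak--valley count, which the paper isolates as Lemma~\ref{lem:10}) to collapse it to the strictly non-intersecting sum with the factor $q^{\binom{k}{2}}$.
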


\begin{proof}
By Lemma~\ref{lem:det} with $\wt(p) = 1$ and $\wtext(p) = q$ for all $p\in\ZZ\times\NN$, we can rewrite Theorem~\ref{thm:MPP1_restated} as 

\begin{multline}\label{eqn:MPP1}
\sum_{(D_1\le \dots \le D_k)\in\Dyck_{2n}^k} q^{v(D_1)+\dots+v(D_k)}
\left(1-\frac{1}{q}\right)^{|D_1\cap D_2|+\dots+|D_{k-1}\cap D_k|} \\
=q^{\binom k2} \sum_{(D_1< \dots < D_k)\in\Dyck_{2n}^k} q^{v(D_1)+\dots+v(D_k)}.
\end{multline}
Then \eqref{eqn:MPP1} follows immediately from Proposition~\ref{prop:MPP_key2} and  Lemma~\ref{lem:10} below.
\end{proof}

\begin{lem} \label{lem:10}
Let $\wt(p) = 1$ and $\wtext(p) = q$ for a lattice point $p$. Then 
\begin{itemize}
\item $\wt(p)\left( \wtext(p)-1 \right)=q-1$ for all lattice points $p$, and
\item $\wt_\HP(D) = q \wt_\VV(D)$ for all $D\in\Dyck_{2j}$ such that every peak in $D$ is a high peak.
\end{itemize}
\end{lem}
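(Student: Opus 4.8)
The plan is to unwind the definitions of $\wt_\VV$ and $\wt_\HP$ under the specialization $\wt\equiv 1$, $\wtext\equiv q$, after which both assertions reduce to elementary facts about Dyck paths. First I would dispose of the first bullet, which is immediate: for every lattice point $p$ we have $\wt(p)\left(\wtext(p)-1\right) = 1\cdot(q-1) = q-1$.

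For the second bullet I would substitute $\wt(p)=1$ and $\wtext(p)=q$ into the defining products, obtaining
\[
\wt_\VV(D) = \prod_{p\in D}1\cdot\prod_{p\in\VV(D)}q = q^{v(D)}
\qand
\wt_\HP(D) = \prod_{p\in D}1\cdot\prod_{p\in\HP(D)}q = q^{|\HP(D)|},
\]
where $v(D)=|\VV(D)|$ is the number of valleys. With these two evaluations in hand, the desired identity $\wt_\HP(D)=q\,\wt_\VV(D)$ is equivalent to the single exponent relation $|\HP(D)| = v(D)+1$, which I would then verify under the hypothesis that every peak of $D$ is a high peak.

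The key (and only substantive) step is the classical observation that along any nonempty Dyck path the peaks and valleys alternate, beginning and ending with a peak, so that the number of peaks exceeds the number of valleys by exactly one. Under the hypothesis that every peak of $D$ is a high peak, $|\HP(D)|$ equals the total number of peaks of $D$, whence $|\HP(D)| = v(D)+1$. Combining this with the two weight computations gives $\wt_\HP(D) = q^{v(D)+1} = q\,\wt_\VV(D)$, as claimed.

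I expect no genuine obstacle here, since the lemma is essentially bookkeeping: the first bullet is a one-line substitution, and for the second the only non-trivial ingredient is the standard alternation of peaks and valleys. The one point worth stating carefully is precisely why the hypothesis that every peak is a high peak allows us to replace $|\HP(D)|$ by the full peak count, after which the peaks-exceed-valleys-by-one fact finishes the argument.
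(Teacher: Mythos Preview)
Your proposal is correct and follows essentially the same approach as the paper: the paper's proof simply states that the first item is clear and that the second follows from the fact that in a Dyck path the number of peaks is always one more than the number of valleys. Your write-up is just a more explicit unpacking of that same argument, including the observation (left implicit in the paper) that the hypothesis ``every peak is a high peak'' is precisely what lets you replace $|\HP(D)|$ by the total peak count.
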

\begin{proof}
The first statement is clear. The second one follows from the fact that 
in a Dyck path the number of peaks is always one more than the number of valleys. 
\end{proof}

\subsection{Proof of Theorem~\ref{conj:9.6}}
\label{sec:proof-thm2}

Recall the second conjecture of Morales, Pak and Panova (Theorem~\ref{conj:9.6}) :
\begin{equation}
  \label{eq:7}
\sum_{\pi\in \RPP(\delta_{n+2k}/\delta_n)} q^{|\pi|} 
=q^{-\frac{k(k-1)(6n+8k-1)}{6}} \det\left( \frac{E^*_{2n+2i+2j-3}(q)}{(q;q)_{2n+2i+2j-3}} \right)_{i,j=1}^k.
\end{equation}

By \eqref{eq:MPP2} and Proposition~\ref{prop:pleasant}, we have
\begin{align*}
&\sum_{\pi\in \RPP(\delta_{n+2k}/\delta_n)} q^{|\pi|}  
=\sum_{P\in\PP(\delta_{n+2k}/\delta_n)} \prod_{u\in P} \frac{q^{h(u)}}{1-q^{h(u)}}\\
&= \sum_{(D_1< \dots < D_k)\in\Dyck_{2n}^k}  \prod_{i=1}^k \left(
\prod_{p\in \VV(D_i)} \frac{ q^{2\HT(p)+1}}{1-q^{2\HT(p)+1}}
\prod_{p\in D_i\setminus\VV(D_i)} \left( 1+ \frac{ q^{2\HT(p)+1}}{ 1-q^{2\HT(p)+1}}\right)
\right)\\
&= \sum_{(D_1< \dots < D_k)\in\Dyck_{2n}^k}  \prod_{i=1}^k \left(
\prod_{p\in \VV(D_i)} q^{2\HT(p)+1} \prod_{p\in D_i} \frac{1}{1-q^{2\HT(p)+1}}
\right)
\end{align*}
and
\[
\frac{E^*_{2n+1}(q)}{(q;q)_{2n+1}} = \sum_{\pi\in \RPP(\delta_{n+2}/\delta_n)} q^{|\pi|} = 
\sum_{D\in\Dyck_{2n}} \prod_{p\in \VV(D)} q^{2\HT(p)+1} \prod_{p\in D} \frac{1}{1-q^{2\HT(p)+1}}.
\]
Thus, by Lemma~\ref{lem:det} with $\wt(p) = 1/(1-q^{2\HT(p)+1})$ and $\wtext(p) = q^{2\HT(p)+1}$, we can rewrite \eqref{eq:7} as follows.

\begin{thm}\label{thm:MPP2}
We have
\begin{multline*}
\sum_{(D_1\le \dots \le D_k)\in\Dyck_{2n}^k} \prod_{i=1}^k \left(
\prod_{p\in \VV(D_i)} q^{2\HT(p)+1} \prod_{p\in D_i} \frac{1}{1-q^{2\HT(p)+1}} \right)\prod_{j=1}^{k-1} \prod_{p\in D_j\cap D_{j+1}} \left(1 - \frac{1}{q^{2\HT(p)+1}}\right) \\
=q^{\frac{k(k-1)(6n+8k-1)}{6}} \sum_{(D_1< \dots < D_k)\in\Dyck_{2n}^k}  \prod_{i=1}^k \left(
\prod_{p\in \VV(D_i)} q^{2\HT(p)+1} \prod_{p\in D_i} \frac{1}{1-q^{2\HT(p)+1}} \right).
\end{multline*}
\end{thm}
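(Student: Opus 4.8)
The plan is to deduce Theorem~\ref{thm:MPP2} from Proposition~\ref{prop:MPP_key2}, exactly in the manner that Theorem~\ref{thm:MPP1_restated} was deduced from it, by choosing the weight functions
\[
\wt(p) = \frac{1}{1-q^{2\HT(p)+1}} \qand \wtext(p) = q^{2\HT(p)+1}.
\]
With these choices, the left-hand side of the desired identity is precisely the left-hand side of the identity in Proposition~\ref{prop:MPP_key2}, and the strictly increasing sum on the right-hand side agrees with the one appearing there. Hence the whole theorem reduces to checking the two hypotheses of Proposition~\ref{prop:MPP_key2} and then evaluating the prefactor $\prod_{i=1}^{k-1} t_{n+2i}^i$.

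The first hypothesis is immediate, since
\[
\wt(p)\bigl(\wtext(p)-1\bigr) = \frac{q^{2\HT(p)+1}-1}{1-q^{2\HT(p)+1}} = -1,
\]
so the constant is $c=-1$. For the second hypothesis I must find $t_j$ with $\wt_\HP(D) = t_j\,\wt_\VV(D)$ for every $D\in\Dyck_{2j}$ all of whose peaks are high peaks. For such a $D$ the set $\HP(D)$ is exactly the set of peaks, so
\[
\frac{\wt_\HP(D)}{\wt_\VV(D)} = \frac{\prod_{p\in\HP(D)} q^{2\HT(p)+1}}{\prod_{p\in\VV(D)} q^{2\HT(p)+1}} = q^{S},
\]
where $S$ is the sum of $2\HT(p)+1$ over the peaks minus the same sum over the valleys of $D$. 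The one nontrivial point is the evaluation of $S$: I would use the elementary facts that a Dyck path of semilength $j$ has exactly one more peak than valley, and that the sum of the peak heights minus the sum of the valley heights equals $j$ (count up steps, noting that each ascent from a valley of height $b$ to the next peak of height $a$ contributes $a-b$ up steps, and there are $j$ up steps in all). Together these give $S = 2j+1$, hence $t_j = q^{2j+1}$, independent of the shape of $D$. This is the analogue of Lemma~\ref{lem:10} and is the main technical ingredient.

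Finally I would substitute $t_{n+2i}=q^{2n+4i+1}$ into the prefactor supplied by Proposition~\ref{prop:MPP_key2}:
\[
\prod_{i=1}^{k-1} t_{n+2i}^i = q^{E}, \qquad E = \sum_{i=1}^{k-1} i\,(2n+4i+1).
\]
Evaluating the sum and factoring out $k(k-1)/6$ gives
\[
E = \frac{k(k-1)}{6}\bigl(3(2n+1)+4(2k-1)\bigr) = \frac{k(k-1)(6n+8k-1)}{6},
\]
which matches the exponent in the statement, completing the proof. I expect no genuine obstacle beyond the peak/valley height identity yielding $t_j=q^{2j+1}$; the rest is a direct invocation of Proposition~\ref{prop:MPP_key2} together with a routine summation.
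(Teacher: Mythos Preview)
Your proposal is correct and follows essentially the same route as the paper: invoke Proposition~\ref{prop:MPP_key2} with $\wt(p)=1/(1-q^{2\HT(p)+1})$ and $\wtext(p)=q^{2\HT(p)+1}$, verify the two hypotheses (the paper packages these as Lemma~\ref{lem:last}), and evaluate $\sum_{i=1}^{k-1} i(2n+4i+1)$. Your derivation of $t_j=q^{2j+1}$ via the up-step count is in fact a little cleaner than the paper's explicit parametrisation of valley coordinates in terms of peak coordinates, but the content is the same.
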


Theorem~\ref{thm:MPP2} follows immediately from Proposition~\ref{prop:MPP_key2} with $t_j=q^{2j+1}$, Lemma~\ref{lem:last} below and the fact
\[
\sum_{i=1}^{k-1} i(2n+4i+1) = \frac{k(k-1)(6n+8k-1)}{6}.
\]
\begin{lem} \label{lem:last}
Let $\wt(p) = 1/(1-q^{2\HT(p)+1})$ and $\wtext(p) = q^{2\HT(p)+1}$. Then 
\begin{itemize}
\item $\wt(p)\left( \wtext(p)-1 \right)=-1$ for all $p$, and 
\item $\wt_\HP(D) = q^{2n+1} \wt_\VV(D)$ for all $D\in\Dyck_{2n}$ such that every peak in $D$ is a high peak.
\end{itemize}
\end{lem}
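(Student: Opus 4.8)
The plan is to verify the two bullet points separately, the first being an immediate algebraic identity and the second reducing to a height-counting identity for Dyck paths.

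For the first bullet I would simply substitute the definitions. Since $\wt(p)=1/(1-q^{2\HT(p)+1})$ and $\wtext(p)=q^{2\HT(p)+1}$, we obtain
\[
\wt(p)\left(\wtext(p)-1\right)=\frac{q^{2\HT(p)+1}-1}{1-q^{2\HT(p)+1}}=-1,
\]
with no dependence on $p$.

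For the second bullet I would start from the definitions of $\wt_\VV$ and $\wt_\HP$. The factor $\prod_{p\in D}\wt(p)$ is common to both, so their ratio is
\[
\frac{\wt_\HP(D)}{\wt_\VV(D)}=\frac{\prod_{p\in\HP(D)}\wtext(p)}{\prod_{p\in\VV(D)}\wtext(p)}
=q^{\,\sum_{p\in\HP(D)}(2\HT(p)+1)\,-\,\sum_{p\in\VV(D)}(2\HT(p)+1)}.
\]
The hypothesis that every peak of $D$ is a high peak means $\HP(D)$ is exactly the set of all peaks, so it remains to check that the exponent equals $2n+1$. Since a Dyck path has exactly one more peak than valley, $|\HP(D)|-|\VV(D)|=1$, and the exponent becomes $2\bigl(\sum_{p\in\HP(D)}\HT(p)-\sum_{p\in\VV(D)}\HT(p)\bigr)+1$. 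Thus the claim reduces to
\[
\sum_{p\in\HP(D)}\HT(p)-\sum_{p\in\VV(D)}\HT(p)=n.
\]

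To prove this last identity I would decompose $D$ into its maximal descending runs. Each such run begins at a point of $\HP(D)$ and ends either at a point of $\VV(D)$ or at the terminal point $(n,0)$ of height $0$, and its vertical drop equals its length. Summing over all descending runs, the total drop equals the number of down steps, namely $n$; on the other hand this total drop equals the sum of the peak heights minus the sum of the valley heights (the terminal point contributing height $0$). Hence $\sum_{p\in\HP(D)}\HT(p)-\sum_{p\in\VV(D)}\HT(p)=n$, which gives $\wt_\HP(D)=q^{2n+1}\wt_\VV(D)$ and completes the argument. I expect no serious obstacle; the only delicate point is the bookkeeping of run endpoints so that the terminal point at height $0$ is correctly excluded from $\VV(D)$, and this is handled cleanly by the run decomposition.
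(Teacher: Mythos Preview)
Your argument is correct. The first bullet is handled identically to the paper (an immediate substitution), and for the second bullet both you and the paper reduce the claim to the identity $\sum_{p\in\HP(D)}\HT(p)-\sum_{p\in\VV(D)}\HT(p)=n$; what differs is how this identity is verified. The paper writes out the peaks $(x_1,y_1),\dots,(x_k,y_k)$, solves explicitly for the valley coordinates $y_i'=(x_i-x_{i+1}+y_i+y_{i+1})/2$, sums $\sum(2y_i'+1)$, and then uses $x_1=y_1$ and $x_k+y_k=2n$ to finish. Your descending-run decomposition is cleaner: it bypasses the coordinate algebra entirely by observing that the total vertical drop over all maximal down-runs is simultaneously the number of down steps ($=n$) and the telescoping difference of peak heights minus valley heights (with the terminal point contributing zero). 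This is a more conceptual and slightly shorter proof of the same key identity; the paper's version, in exchange, makes the dependence on the endpoint constraints of the Dyck path completely explicit.
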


\begin{proof}
The first statement is clear. For the second one, consider a Dyck path $D\in\Dyck_{2n}$ all of whose peaks are high peaks. 
Let $p_1=(x_1,y_1),\dots,p_k=(x_k,y_k)$ are the peaks of $D$ with $x_1<\dots<x_k$. Then the valleys of $D$ are
the points $v_i=(x'_i,y'_i)$ for $i\in[k-1]$ satisfying $y_i'-y_i = -(x_i'-x_i)$ and $y_i'-y_{i+1} = x_i'-x_{i+1}$. Solving the system of equations
for $x_i '$ and $y_i '$ gives 
\[
x'_i = \frac{x_i+x_{i+1}+y_i-y_{i+1}}2  \qand y'_i = \frac{x_i-x_{i+1}+y_i+y_{i+1}}2.
\]
Thus 
\[
\wt_\VV(D)=q^{\sum_{p\in\VV(D)}(2\HT(p)+1)} \wt(D) = q^{\sum_{i=1}^{k-1}(2y_i'+1)} \wt(D)
=q^{(x_1-x_k-y_1-y_k) + 2(y_1+\dots+y_k)+k-1} \wt(D).
\]
Since $(x_1,y_1)$ is the first peak and $(x_k,y_k)$ is the last peak, $x_1=y_1$ and $x_k+y_k=2n$. 
Therefore,
\[
\wt_\VV(D)=q^{-2n-1} q^{2(y_1+\dots+y_k)+k} \wt(D) = q^{-2n-1}\wt_\HP(D) . \qedhere
\]
\end{proof}

\section{A determinantal formula for a certain class of skew shapes}
\label{sec:lascoux-pragacz-type}

Lascoux and Pragacz \cite{Lascoux1988} found a determinantal formula for a skew Schur function in terms of ribbon Schur functions. 
Morales et al. \cite{MPP2} found a similar determinantal formula for the number of excited diagrams.
In this section, applying the same methods used in the previous section, we find a determinantal formula for $p(\lm)$ and the generating function for the reverse plane partitions of shape $\lm$ for a certain class of skew shapes $\lm$ including $\delta_{n+2k}/\delta_{n}$ and $\delta_{n+2k+1}/\delta_{n}$.

Consider a partition $\lambda$. Recall that $u=(i,j)\in\lambda$ means that $u$ is a cell in the $i$th row and $j$th column of the Young diagram of $\lambda$.  Let $L=(u_0,u_1,\dots,u_m)$ be a sequence of cells in $\lambda$.
Each pair $(u_{i-1},u_i)$ is called a \emph{step} of $L$. A step $(u_{i-1},u_i)$ is called an \emph{up step} (resp.~\emph{down step}) if $u_i-u_{i-1}$ is equal to $(-1,0)$ (resp.~$(0,1)$).
We say that $L$ is a \emph{$\lambda$-Dyck path} if every step is either
an up step or a down step. The set of $\lambda$-Dyck paths starting at a cell $s$ and ending at a cell $t$ is denoted by
$\Dyck_{\lambda}(s,t)$. If $s$ is the southmost cell of a column of $\lambda$
and $t$ is the eastmost cell of a row of $\lambda$, we denote by
$L_\lambda(s,t)$ the lowest Dyck path in $\Dyck_{\lambda}(s,t)$, see Figure~\ref{fig:lowest} for an example.

\begin{figure}[h]
  \centering
\includegraphics{./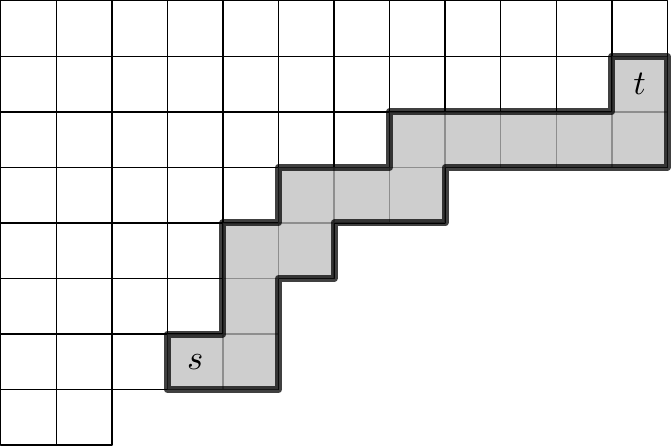}
  \caption{The lowest path $L_\lambda(s,t)$ in $\Dyck_{\lambda}(s,t)$. }
  \label{fig:lowest}
\end{figure}

Let $D=(u_0,u_1,\dots,u_m)$ be a $\lambda$-Dyck path. 
A cell $u_i$, for $1\le i\le m-1$, is called a \emph{peak} (resp.~\emph{valley}) if $(u_{i-1},u_i)$ is an up step (resp.~down step)
and $(u_{i},u_{i+1})$ is a down step (resp.~up step).
A peak $u_i$ is called a \emph{$\lambda$-high peak} if $u_i+(1,1)\in\lambda$. 
The set of valleys in $D$ is denoted by $\VV(D)$.

Suppose that $D_1$ and $D_2$ are $\lambda$-Dyck paths.
We say that $D_1$ is \emph{weakly below} $D_2$, denoted $D_1\le D_2$, if the following conditions hold.
\begin{itemize}
\item For every cell $u\in D_1$, there is an integer $k\ge0$ satisfying $u-(k,k)\in D_2$. 
\item If $u\in D_1\cap D_2$, then $u$ is a peak of $D_1$ and a valley of $D_2$. 
\end{itemize}
We say that $D_1$ is \emph{strictly below} $D_2$, denoted $D_1<D_2$, if $D_1\le D_2$ and $D_1\cap D_2=\emptyset$.

The \emph{Kreiman outer decomposition} of $\lm$ is a sequence $L_1,\dots,L_k$ of mutually disjoint nonempty $\lambda$-Dyck paths 
 satisfying the following conditions.
\begin{itemize}
\item Each $L_i$ starts at the southmost cell of a column of $\lambda$ and ends at the eastmost cell of a row of $\lambda$.
\item $L_1\cup \dots\cup L_k = \lm$.
\end{itemize}
See Figure~\ref{fig:Kreiman} for an example of the Kreiman outer decomposition.
It is known \cite{Kreiman} that there is a unique (up to permutation) Kreiman outer decomposition of $\lm$. 

\begin{figure}[h]
  \centering
\includegraphics{./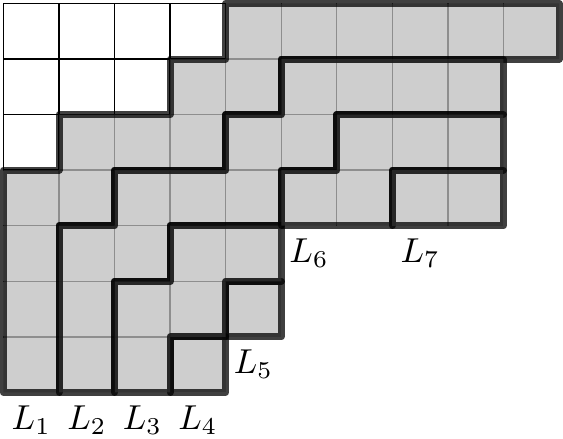} \qquad \qquad \includegraphics{./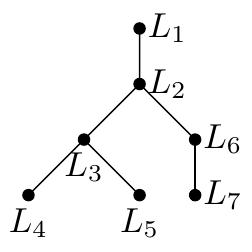}
  \caption{The left diagram shows the Kreiman outer decomposition $L_1,\dots,L_7$ of $\lm$ for $\lambda=(9,8,8,8,5,5,4)$ and $\mu=(4,3,1)$. The label $L_i$ is written below the starting cell of it. The right diagram shows the poset of $L_1,\dots,L_7$ with relation $<$. }
  \label{fig:Kreiman}
\end{figure}

A poset $P$ is called \emph{ranked} if there exists a rank function $r:P\to\NN$ satisfying the following conditions. 
\begin{itemize}
\item If $x$ is a minimal element, $r(x)=0$.
\item If $y$ covers $x$, then $r(y)=r(x)+1$. 
\end{itemize}
Note that if $P$ is a ranked poset, there is a unique rank function $r:P\to\NN$. The \emph{rank}
of $x\in P$ is defined to be $r(x)$. For example,
the poset in Figure~\ref{fig:Kreiman} is a ranked poset and the ranks of $L_1,L_2,\dots,L_7$ are $3,2,1,0,0,1,0$ respectively. However, 
if we remove the element $L_7$ from this poset, the resulting poset is not a ranked poset.

The following two theorems can be proved by the same arguments in the previous section. We omit the details.

\begin{thm}\label{thm:LP1}
Let $L_1,\dots,L_k$ be the Kreiman outer decomposition of $\lm$. 
Let $P$ be the poset of $L_1,\dots,L_k$ with relation $<$. Suppose that the following conditions hold.
\begin{itemize}
\item $P$ is a ranked poset.
\item If $L_i<L_j$, then in $L_j$ the first step is an up step, the last step is a down step and every peak is a $\lambda$-high peak.
\end{itemize}
Let $s_i$ (resp.~$t_i$) be the first (resp.~last) cell in $L_i$ and $r_i$  the rank of $L_i$ in the poset $P$. 
Then we have
\[
\sum_{\pi\in\RPP(\lm)} q^{|\pi|} = q^{-\sum_{i=1}^k r_i|L_i|} \det \left(
E_\lambda(s_i,t_j;q)\right)_{i,j=1}^k,
\]
where 
\[
E_\lambda(s_i,t_j;q) = \sum_{\pi\in \RPP(L_\lambda(s_i,t_j))} q^{|\pi|} = 
\sum_{D\in\Dyck_\lambda(s_i,t_j)} \prod_{u\in D} \frac1{1-q^{h(u)}} \prod_{u\in\VV(D)} q^{h(u)}.
\]
\end{thm}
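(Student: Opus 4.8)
The plan is to run the same three-step argument used to establish Theorem~\ref{conj:9.6} (i.e.\ Theorem~\ref{thm:MPP2}), now with ordinary Dyck paths replaced by $\lambda$-Dyck paths. First I would reinterpret the pleasant-diagram side. By \eqref{eq:MPP2} we have $\sum_{\pi\in\RPP(\lm)}q^{|\pi|} = \sum_{P\in\PP(\lm)}\prod_{u\in P}\frac{q^{h(u)}}{1-q^{h(u)}}$, and I would establish a Kreiman-type analog of Proposition~\ref{prop:pleasant}: using the Kreiman outer decomposition $L_1,\dots,L_k$, the excited diagrams of $\lm$ are in bijection with strictly non-intersecting tuples $(D_1<\dots<D_k)$ of $\lambda$-Dyck paths with $D_i\in\Dyck_\lambda(s_i,t_i)$, and the pleasant diagrams correspond to such tuples together with an arbitrary subset of marked non-valley cells. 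Setting $\wt(u)=1/(1-q^{h(u)})$ and $\wtext(u)=q^{h(u)}$, this rewrites the left-hand side as $\sum_{(D_1<\dots<D_k)}\wt_\VV(D_1,\dots,D_k)$, exactly as in Section~\ref{sec:proof-thm2} but with $2\HT(p)+1$ replaced by the hook length $h(u)$.

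Second I would handle the determinant. A $\lambda$-Dyck path version of Lemma~\ref{lem:det} expresses $\det(E_\lambda(s_i,t_j;q))_{i,j=1}^k$ as a sum over weakly non-intersecting tuples, namely $\sum_{(D_1\le\dots\le D_k)}\wt_\VV(D_1,\dots,D_k)\prod_{i=1}^{k-1}\prod_{p\in D_i\cap D_{i+1}}(1-1/\wtext(p))$, where $E_\lambda(s_i,t_j;q)=\sum_{D\in\Dyck_\lambda(s_i,t_j)}\wt_\VV(D)$ as in the statement. The sign-reversing involution in the proof of Lemma~\ref{lem:det} is purely local, swapping the tails of two paths past their rightmost shared step, so it transfers verbatim to $\lambda$-Dyck paths. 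The one point that needs the hypotheses is that the determinant localizes to the identity permutation; here the assumption that $P$ is a \emph{ranked} poset guarantees that the sources $s_i$ and sinks $t_i$ are ordered compatibly, so that every non-identity term cancels, which is what makes the modified Lindstr\"om--Gessel--Viennot lemma applicable.

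Third I would convert weakly non-intersecting tuples into strictly non-intersecting ones via the $\lambda$-Dyck path analog of Proposition~\ref{prop:MPP_key2}. Its first hypothesis $\wt(p)(\wtext(p)-1)=c$ holds with $c=-1$ exactly as in Lemma~\ref{lem:last}. Its second hypothesis is supplied by the assumption that whenever $L_i<L_j$ the path $L_j$ begins with an up step, ends with a down step, and has all peaks being $\lambda$-high peaks: the computation in Lemma~\ref{lem:last}, where $\sum_{\text{peaks}}h-\sum_{\text{valleys}}h$ telescopes through the first and last cells, then gives $\wt_\HP(D)=q^{|L_j|}\wt_\VV(D)$, so the quantity $t_j$ is played by $q^{|L_j|}$. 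Running the telescoping of Proposition~\ref{prop:MPP_key2} along the ranked poset, each path $L_i$ is promoted from $\le$ to $<$ exactly $r_i$ times, contributing a factor $q^{r_i|L_i|}$, so that the weakly non-intersecting sum equals $q^{\sum_i r_i|L_i|}$ times the strictly non-intersecting sum. Combining the three steps gives $\det(E_\lambda(s_i,t_j;q))_{i,j=1}^k=q^{\sum_i r_i|L_i|}\sum_{\pi\in\RPP(\lm)}q^{|\pi|}$, which is the claimed identity after dividing by the prefactor.

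The main obstacle I anticipate is not any single computation but the bookkeeping that makes the ranked-poset telescoping produce precisely the exponent $\sum_i r_i|L_i|$. In the staircase case the poset is a chain and the order of promotions is forced; for a general ranked poset one must argue that the promotions can be scheduled rank by rank and that the $<$/$\le$ conversions commute appropriately, so that each $L_i$ accrues its factor $q^{|L_i|}$ exactly $r_i$ times independently of the schedule. Verifying that the ranked condition is precisely what licenses both the Lindstr\"om--Gessel--Viennot localization in the second step and this schedule-independence in the third is the crux; once that is in place, every other ingredient is a direct transcription of the arguments in Section~\ref{sec:proof-thm2}.
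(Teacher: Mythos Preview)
Your proposal is correct and matches the paper's own approach exactly: the paper simply states that Theorem~\ref{thm:LP1} ``can be proved by the same arguments in the previous section'' and omits the details, and your three-step outline (pleasant diagrams via a Kreiman analog of Proposition~\ref{prop:pleasant}, the modified Lindstr\"om--Gessel--Viennot Lemma~\ref{lem:det} for $\lambda$-Dyck paths, then the weak-to-strict conversion via Proposition~\ref{prop:MPP_key2} with $t_{L_j}=q^{|L_j|}$) is precisely that transcription. Your identification of the ranked-poset bookkeeping as the only nontrivial point beyond the chain case is apt and is indeed the content hidden in the omitted details.
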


\begin{thm}\label{thm:LP2}
Under the same conditions in Theorem~\ref{thm:LP1}, we have
\[
\sum_{\substack{D_i\in \Dyck_{\lambda}(s_i,t_i) \\ D_1<\dots<D_k}} q^{\sum_{i=1}^k |\VV(D_i)|} = q^{-\sum_{i=1}^k r_i} \det \left(
F_\lambda(s_i,t_j;q)\right)_{i,j=1}^k,
\]
where
\[
F_\lambda(s_i,t_j;q) = \sum_{D\in\Dyck_\lambda(s_i,t_j)} q^{|\VV(D)|}.
\]
\end{thm}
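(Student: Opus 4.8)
The plan is to transplant the lattice-path machinery of Section~\ref{sec:MPP conjectures} from ordinary Dyck paths to $\lambda$-Dyck paths, following the proof of Theorem~\ref{thm:MPP1_restated} step by step. Throughout I would take the weight functions $\wt(u)=1$ and $\wtext(u)=q$ on the cells of $\lambda$, so that $\wt_\VV(D)=q^{|\VV(D)|}$ and therefore $F_\lambda(s_i,t_j;q)=\sum_{D\in\Dyck_\lambda(s_i,t_j)}\wt_\VV(D)$.

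First I would prove a $\lambda$-Dyck path version of the modified Lindstr\"om--Gessel--Viennot lemma (Lemma~\ref{lem:det}). Expanding $\det(F_\lambda(s_i,t_j;q))_{i,j=1}^k$ yields a signed sum over $k$-tuples $(D_1,\dots,D_k)$ with $D_i\in\Dyck_\lambda(s_i,t_{\pi(i)})$ for some $\pi\in\Sym_k$. On the tuples in which two paths share a step, the tail-swapping involution---exchange the parts of $D_i$ and $D_j$ after their last common cell, where $(i,j)$ is the lexicographically largest intersecting pair---is sign-reversing and fixed-point-free, and the swapped paths again lie in $\lambda$ because each tail already does and they share their first cell. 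By the nesting built into the Kreiman outer decomposition, the surviving non-crossing families all realize the identity matching $s_i\mapsto t_i$, so no residual signs occur and I obtain
\[
\det(F_\lambda(s_i,t_j;q))_{i,j=1}^k = \sum_{\substack{D_i\in\Dyck_\lambda(s_i,t_i)\\ D_1\le\cdots\le D_k}} q^{\sum_i|\VV(D_i)|}\prod_{i=1}^{k-1}\prod_{p\in D_i\cap D_{i+1}}\left(1-\frac1q\right),
\]
where $\le$ is the order on $\lambda$-Dyck paths from Section~\ref{sec:lascoux-pragacz-type}.

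Next I would transfer Propositions~\ref{prop:MPP_key1} and \ref{prop:MPP_key2}. The weights satisfy $\wt(p)(\wtext(p)-1)=q-1$, a constant, and the $\lambda$-analogue of Lemma~\ref{lem:10} holds: if every peak of a $\lambda$-Dyck path $D$ is a $\lambda$-high peak and, as for the upper paths in Theorem~\ref{thm:LP1}, $D$ begins with an up step and ends with a down step, then the number of peaks exceeds the number of valleys by exactly one, whence $\wt_\HP(D)=q\,\wt_\VV(D)$ with the multiplier $t_j=q$ independent of $j$. This high-peak hypothesis is precisely what makes the $\lambda$-analogues of the maps $\phi_\VV$ and $\phi_\HP$ into $\lambda$-Schr\"oder paths well-defined, so the proof of Proposition~\ref{prop:MPP_key1} carries over. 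Running the telescoping of Proposition~\ref{prop:MPP_key2} then resolves the weakly non-intersecting family into a strictly non-intersecting one, each covering resolution contributing a factor $q$; since $P$ is ranked, $L_i$ is lifted exactly $r_i$ times, giving the accumulated factor $q^{\sum_i r_i}$ (which reduces to $q^{\binom k2}$ in the totally ordered case of Theorem~\ref{conj:9.3}). Thus
\[
\det(F_\lambda(s_i,t_j;q))_{i,j=1}^k = q^{\sum_{i=1}^k r_i}\sum_{\substack{D_i\in\Dyck_\lambda(s_i,t_i)\\ D_1<\cdots<D_k}} q^{\sum_i|\VV(D_i)|},
\]
and dividing by $q^{\sum_i r_i}$ gives the claim.

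The main obstacle is the bookkeeping forced by the partial order, not the weight computations, which are local and identical to those in Section~\ref{sec:MPP conjectures}. Concretely, I expect the delicate points to be (i) verifying that the ranked-poset hypothesis lets the telescoping of Proposition~\ref{prop:MPP_key2} be organized one covering relation at a time so that each $L_i$ accrues exactly $r_i$ factors of $q$, and (ii) confirming that the sources $s_i$ and sinks $t_i$ coming from the Kreiman outer decomposition are nested sufficiently that the unique non-crossing matching in the Lindstr\"om--Gessel--Viennot expansion is the identity. Once these two structural facts are established, every remaining step is a verbatim translation of the Dyck path arguments.
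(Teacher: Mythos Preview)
Your proposal is correct and is exactly the approach the paper intends: the authors explicitly state that Theorems~\ref{thm:LP1} and~\ref{thm:LP2} ``can be proved by the same arguments in the previous section'' and omit the details, and you have supplied precisely those arguments---the $\lambda$-Dyck analogue of Lemma~\ref{lem:det}, the transfer of Propositions~\ref{prop:MPP_key1} and~\ref{prop:MPP_key2} with $\wt\equiv1$, $\wtext\equiv q$, and the observation (mirroring Lemma~\ref{lem:10}) that the hypothesis on $\lambda$-high peaks together with the up/down start and end forces one more peak than valley, yielding $t_j=q$ and hence the factor $q^{\sum_i r_i}$. The two points you flag as delicate, the ranked-poset bookkeeping for the telescoping and the identity matching in the LGV step, are indeed the places where the partial order requires extra care beyond the totally ordered case of Section~\ref{sec:MPP conjectures}.
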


Since
\[
p(\lm) = 2^{|\lm|}\sum_{\substack{D_i\in \Dyck_{\lambda}(s_i,t_i) \\ D_1<\dots<D_k}} 2^{-\sum_{i=1}^k |\VV(D_i)|},
\]
by substituting $q=1/2$ in Theorem~\ref{thm:LP2}, we obtain 
\[
p(\lm) = 2^{|\lm|+\sum_{i=1}^k r_i} \det \left(
F_\lambda(s_i,t_j;1/2)\right)_{i,j=1}^k,
\]  
which can be rewritten as follows. 

\begin{cor}\label{cor:LP2}
Under the same conditions in Theorem~\ref{thm:LP1}, we have
\[
p(\lm) = 2^{\sum_{i=1}^k r_i} \det \left(
p(L_\lambda(s_i,t_j))\right)_{i,j=1}^k.
\]  
\end{cor}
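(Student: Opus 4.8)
The plan is to deduce this directly from Theorem~\ref{thm:LP2} at the specialization $q=1/2$, combined with the single-path ($k=1$) interpretation of pleasant diagrams. First I would record the pleasant-diagram count in path form: arguing exactly as in Proposition~\ref{prop:pleasant}, a pleasant diagram of $\lm$ corresponds to a strictly non-intersecting tuple $(D_1<\dots<D_k)$ of $\lambda$-Dyck paths $D_i\in\Dyck_\lambda(s_i,t_i)$ together with an arbitrary subset of the non-valley cells, so that
\[
p(\lm) = 2^{|\lm|}\sum_{\substack{D_i\in\Dyck_\lambda(s_i,t_i)\\ D_1<\dots<D_k}} 2^{-\sum_{i=1}^k|\VV(D_i)|}.
\]
Here I would use that the number of cells in a $\lambda$-Dyck path from $s_i$ to $t_i$ is determined by its endpoints, and that the disjoint covering paths satisfy $\sum_i|D_i|=|\lm|$. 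Applying Theorem~\ref{thm:LP2} at $q=1/2$ converts the sum into a determinant, which is precisely the intermediate identity $p(\lm)=2^{|\lm|+\sum_i r_i}\det(F_\lambda(s_i,t_j;1/2))_{i,j=1}^k$ recorded just before the corollary.

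Next I would rewrite each entry $F_\lambda(s_i,t_j;1/2)$ in terms of $p(L_\lambda(s_i,t_j))$. The $k=1$ instance of the path count above reads
\[
p(L_\lambda(s_i,t_j)) = 2^{|L_\lambda(s_i,t_j)|}\, F_\lambda(s_i,t_j;1/2),
\]
since $L_\lambda(s_i,t_j)$ is the ribbon whose cells form a single $\lambda$-Dyck path and every path in $\Dyck_\lambda(s_i,t_j)$ has the same number $|L_\lambda(s_i,t_j)|$ of cells. Hence $F_\lambda(s_i,t_j;1/2)=2^{-|L_\lambda(s_i,t_j)|}p(L_\lambda(s_i,t_j))$.

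The key observation, which lets these scalars be pulled out of the determinant, is that $|L_\lambda(s_i,t_j)|$ splits additively. Writing $s_i=(a_i,b_i)$ and $t_j=(c_j,d_j)$ and using that an up step decreases the row by one while a down step increases the column by one, the path from $s_i$ to $t_j$ has $a_i-c_j$ up steps and $d_j-b_i$ down steps, so
\[
|L_\lambda(s_i,t_j)| = (a_i-c_j)+(d_j-b_i)+1 = (a_i-b_i)+(d_j-c_j)+1,
\]
a term depending only on $i$ plus a term depending only on $j$ plus a constant. Therefore the scalar $2^{-|L_\lambda(s_i,t_j)|}$ in entry $(i,j)$ factors as a row weight times a column weight times $2^{-1}$, and multilinearity of the determinant gives
\[
\det(F_\lambda(s_i,t_j;1/2))_{i,j=1}^k = 2^{-k-\sum_i(a_i-b_i)-\sum_j(d_j-c_j)}\det(p(L_\lambda(s_i,t_j)))_{i,j=1}^k.
\]

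Finally I would verify that the exponents of $2$ cancel. Since $k+\sum_i(a_i-b_i)+\sum_j(d_j-c_j)=\sum_i\bigl((a_i-b_i)+(d_i-c_i)+1\bigr)=\sum_i|L_\lambda(s_i,t_i)|=\sum_i|L_i|=|\lm|$, substituting the previous display into the intermediate identity cancels the factor $2^{|\lm|}$ and yields $p(\lm)=2^{\sum_i r_i}\det(p(L_\lambda(s_i,t_j)))_{i,j=1}^k$, as claimed. The only genuinely substantive point is the additive splitting of $|L_\lambda(s_i,t_j)|$ into an $i$-part and a $j$-part; everything else is the substitution of the $k=1$ case and routine bookkeeping of the powers of $2$.
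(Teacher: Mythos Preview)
Your proof is correct and follows the same approach as the paper: the paper derives the intermediate identity $p(\lm)=2^{|\lm|+\sum_i r_i}\det(F_\lambda(s_i,t_j;1/2))$ from Theorem~\ref{thm:LP2} at $q=1/2$ and then simply asserts that it ``can be rewritten'' as the corollary, whereas you carry out that rewriting explicitly via the additive splitting $|L_\lambda(s_i,t_j)|=(a_i-b_i)+(d_j-c_j)+1$ and multilinearity of the determinant. The only content you add beyond the paper is this routine bookkeeping, which the paper leaves to the reader.
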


If $\lm=\delta_{n+2k}/\delta_n$ in Theorem~\ref{thm:LP1}, we obtain
Theorem~\ref{conj:9.6}. If $\lm=\delta_{n+2k+1}/\delta_n$ in Theorem~\ref{thm:LP1}, 
using the Kreiman outer decomposition as shown in Figure~\ref{fig:E_odd}, we obtain
the following corollary.

\begin{figure}[h]
  \centering
\includegraphics{./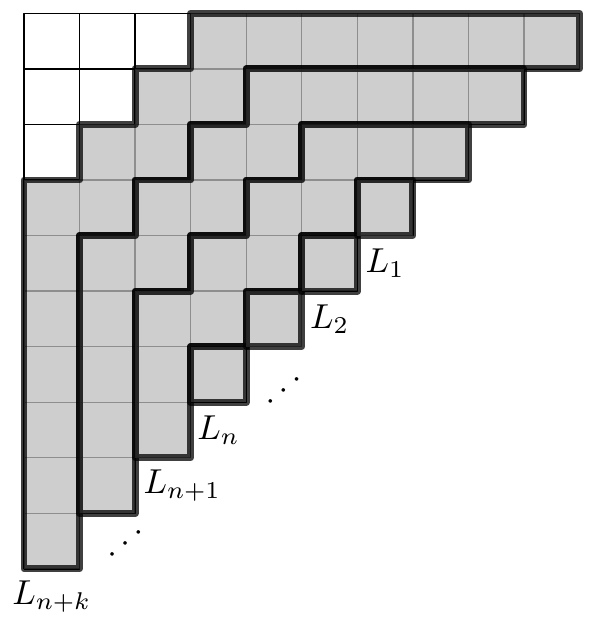} 
  \caption{The Kreiman outer decomposition $L_1,\dots,L_{n+k}$ of $\delta_{n+2k+1}/\delta_n$. The label $L_i$ is written below the starting cell of it.}
  \label{fig:E_odd}
\end{figure}

\begin{cor}\label{cor:E_odd}
We have
\[
\sum_{\pi\in \RPP(\delta_{n+2k+1}/\delta_n)} q^{|\pi|} 
=q^{-\frac{k(k+1)(6n+8k+1)}{6}} \det\left( \frac{E^*_{2i+2\overline{j}+1}(q)}{(q;q)_{2i+2\overline{j}+1}} \right)_{i,j=1}^{n+k},
\]  
where $E^*_t(q)=0$ if $t<0$ and 
\[
\overline j =
\begin{cases}
j-n-1 & \mbox{if $j>n$},\\  
-j & \mbox{if $j\le n$}.
\end{cases}
\]
\end{cor}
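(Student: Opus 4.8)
The plan is to apply Theorem~\ref{thm:LP1} to the skew shape $\lm=\delta_{n+2k+1}/\delta_n$ using the Kreiman outer decomposition $L_1,\dots,L_{n+k}$ depicted in Figure~\ref{fig:E_odd}, and then to reduce the corollary to three verifications: that the hypotheses of Theorem~\ref{thm:LP1} hold for this decomposition; that the matrix entries $E_\lambda(s_i,t_j;q)$ equal the claimed quotients $E^*_{2i+2\overline{j}+1}(q)/(q;q)_{2i+2\overline{j}+1}$; and that the prefactor exponent $\sum_{i=1}^{n+k}r_i|L_i|$ evaluates to $\frac{k(k+1)(6n+8k+1)}{6}$. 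Everything else is then a direct substitution into the conclusion of Theorem~\ref{thm:LP1}, exactly as Theorem~\ref{conj:9.6} was obtained from it.

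First I would describe the decomposition precisely and check the hypotheses. The $n+k$ paths split into $n$ ``floor'' paths of rank $0$, which lie along the bottom boundary of $\delta_{n+2k+1}/\delta_n$, and a chain of $k$ paths of ranks $1,2,\dots,k$ wrapping around them. I would verify that the poset $P$ under the relation $<$ is ranked with exactly these ranks; the only real point here is that every minimal element is one of the rank-$0$ floor paths, so that the non-ranked configuration mentioned after Figure~\ref{fig:Kreiman} cannot occur. I would then check that whenever $L_i<L_j$ the path $L_j$ begins with an up step, ends with a down step, and has every peak a $\lambda$-high peak; the last condition follows from the geometry, since a path lying strictly above another inside $\delta_{n+2k+1}$ has all its peaks in the interior, so $u+(1,1)\in\lambda$ for every peak $u$, just as in the even case.

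Next I would identify the matrix entries. Under the point-coordinate identification of Section~\ref{sec:pleas-diagr-delt}, every cell $u$ of $\delta_{n+2k+1}$ has hook length $h(u)=2\HT(u)+1$, so the lowest path $L_\lambda(s_i,t_j)$ is a zigzag whose valleys sit at height $0$. Hence $\Dyck_\lambda(s_i,t_j)$ is in height-preserving bijection with $\Dyck_{2m}$, where $2m+1=|L_\lambda(s_i,t_j)|$, and the weight $\prod_{u\in D}(1-q^{h(u)})^{-1}\prod_{u\in\VV(D)}q^{h(u)}$ matches the valley form of $E^*_{2m+1}(q)/(q;q)_{2m+1}$ recorded in the proof of Theorem~\ref{thm:MPP2} (equivalently, \eqref{eq:RPP->E} applied to $\delta_{m+2}/\delta_m$). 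Counting the cells of $L_\lambda(s_i,t_j)$ then gives $|L_\lambda(s_i,t_j)|=2i+2\overline{j}+1$, where the two cases of $\overline{j}$ record whether the end row $t_j$ abuts the inner staircase $\delta_n$ (for $j\le n$, giving $\overline{j}=-j$) or the central diagonal of $\delta_{n+2k+1}$ (for $j>n$, giving $\overline{j}=j-n-1$); when $i+\overline{j}<0$ the set $\Dyck_\lambda(s_i,t_j)$ is empty, which matches the convention $E^*_t=0$ for $t<0$. Finally, reading the path sizes off the decomposition, only the $k$ chain paths carry positive rank, and the rank-$r$ chain path has $2n+4r-1$ cells, so $\sum_{i=1}^{n+k}r_i|L_i|=\sum_{r=1}^{k}r(2n+4r-1)=\frac{k(k+1)(6n+8k+1)}{6}$.

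I expect the main obstacle to be the bookkeeping in the third step: pinning down the exact starting columns $s_i$ and ending rows $t_j$ of the $n+k$ paths and establishing the piecewise length formula $|L_\lambda(s_i,t_j)|=2i+2\overline{j}+1$ together with the rank assignment. The extra central diagonal that distinguishes $\delta_{n+2k+1}$ from the even staircase $\delta_{n+2k}$ breaks the symmetry exploited in Theorem~\ref{conj:9.6}, and it is precisely this asymmetry that forces the split into the regimes $j\le n$ and $j>n$ and produces the reindexing $\overline{j}$ in the $(n+k)\times(n+k)$ determinant. Once the length formula and ranks are confirmed, the corollary is immediate from Theorem~\ref{thm:LP1}.
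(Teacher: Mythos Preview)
Your proposal is correct and follows exactly the paper's approach: the paper simply states that the corollary is obtained by applying Theorem~\ref{thm:LP1} to $\delta_{n+2k+1}/\delta_n$ with the Kreiman outer decomposition of Figure~\ref{fig:E_odd}, and you have supplied the details of that application (verifying the hypotheses, identifying $E_\lambda(s_i,t_j;q)$ with $E^*_{2i+2\overline{j}+1}(q)/(q;q)_{2i+2\overline{j}+1}$ via \eqref{eq:RPP->E}, and computing $\sum r_i|L_i|=\sum_{r=1}^{k}r(2n+4r-1)=\tfrac{k(k+1)(6n+8k+1)}{6}$).
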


Similarly, as a special case of  Corollary~\ref{cor:LP2}, we obtain a determinant formula for the number of pleasant diagrams of shape
$\delta_{n+2k+1}/\delta_n$.
\begin{cor}\label{cor:E_odd2}
We have
\[
p(\delta_{n+2k+1}/\delta_n)
=2^{\binom{k+1}2} \det\left( \mathfrak{s}_{i+\overline{j}} \right)_{i,j=1}^{n+k},
\]  
where 
\[
\mathfrak{s}_m = 
\begin{cases}
  0 & \mbox{if $m<0$,}\\
  2 & \mbox{if $m=0$,}\\
  2^{m+2}s_m & \mbox{if $m>0$,}\\
\end{cases}
\]
and $s_m$ is the little Schr\"oder number.
\end{cor}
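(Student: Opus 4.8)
The plan is to obtain Corollary~\ref{cor:E_odd2} from Corollary~\ref{cor:LP2} in exactly the same way that Corollary~\ref{cor:E_odd} is obtained from Theorem~\ref{thm:LP1}. First I would take the Kreiman outer decomposition $L_1,\dots,L_{n+k}$ of $\delta_{n+2k+1}/\delta_n$ depicted in Figure~\ref{fig:E_odd} and check that it satisfies the hypotheses of Theorem~\ref{thm:LP1}: that the poset $P$ of the $L_i$ under $<$ is ranked, and that whenever $L_i<L_j$ the path $L_j$ begins with an up step, ends with a down step, and has only $\lambda$-high peaks. These are precisely the conditions already verified in the derivation of Corollary~\ref{cor:E_odd}, since the decomposition and the shape are the same; nothing new is required here.

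Next I would apply Corollary~\ref{cor:LP2}, which gives
\[
p(\delta_{n+2k+1}/\delta_n) = 2^{\sum_{i=1}^{n+k} r_i}\det\left(p(L_\lambda(s_i,t_j))\right)_{i,j=1}^{n+k},
\]
where $r_i$ is the rank of $L_i$ in $P$ and $s_i,t_j$ are the starting and ending cells. The two remaining tasks are to evaluate the entries and the prefactor. For the entries, I would observe that the lowest path $L_\lambda(s_i,t_j)$ is, whenever it exists, a zigzag border strip equal as a skew shape to $\delta_{m+2}/\delta_m$ with $m=i+\overline{j}$, exactly as in Corollary~\ref{cor:E_odd}; when no $\lambda$-Dyck path from $s_i$ to $t_j$ exists we have $i+\overline{j}<0$ and the entry is $0$. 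Since $p(\delta_{m+2}/\delta_m)=\mathfrak{s}_m$ by the result of Morales et al.\ quoted in the introduction, and since a single cell ($m=0$) has exactly two pleasant diagrams, giving $\mathfrak{s}_0=2$, we get $p(L_\lambda(s_i,t_j))=\mathfrak{s}_{i+\overline{j}}$ with the stated boundary conventions.

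For the prefactor, I would reuse the rank computation already carried out for Corollary~\ref{cor:E_odd}. There one finds $\sum_{i=1}^{n+k} r_i|L_i| = \frac{k(k+1)(6n+8k+1)}{6}$, and the same enumeration of the ranks of $L_1,\dots,L_{n+k}$ yields the plain sum $\sum_{i=1}^{n+k} r_i = \binom{k+1}{2}$. This is the direct analog of the even case, where the $k$ nested paths form a chain of ranks $0,1,\dots,k-1$ summing to $\binom{k}{2}$ and produce the factor $2^{\binom{k}{2}}$ in Theorem~\ref{conj:9.3}. Combining these, the determinantal identity follows. The main obstacle, such as it is, lies in the bookkeeping: correctly matching the index $\overline{j}$ to the strip length $m=i+\overline{j}$ across the two regimes $j\le n$ and $j>n$, handling the degenerate strips ($m<0$ and $m=0$) so that the determinant entries agree with the piecewise definition of $\mathfrak{s}_m$, and confirming the rank sum $\binom{k+1}{2}$ from the poset in Figure~\ref{fig:E_odd}.
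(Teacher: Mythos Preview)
Your proposal is correct and follows exactly the approach the paper intends: the paper simply states that Corollary~\ref{cor:E_odd2} is obtained ``similarly, as a special case of Corollary~\ref{cor:LP2}'' using the same Kreiman outer decomposition of $\delta_{n+2k+1}/\delta_n$ as in Corollary~\ref{cor:E_odd}, and your outline supplies precisely the details (the identification $p(L_\lambda(s_i,t_j))=\mathfrak{s}_{i+\overline{j}}$ and the rank sum $\sum r_i=\binom{k+1}{2}$) that the paper leaves implicit.
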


\begin{exam}
If $n=2$ and $k=1$, we have
\[
p(\delta_{n+2k+1}/\delta_{n}) = p(\delta_{5}/\delta_2) = 2^8+2^9=3\cdot 2^8,
\]
and
\[
2^{\binom{k+1}2} \det\left( \mathfrak{s}_{i+\overline{j}} \right)_{i,j=1}^{n+k}
=2 \det\left(
  \begin{matrix}
    \mathfrak{s}_0 & \mathfrak{s}_{-1} & \mathfrak{s}_1 \\
    \mathfrak{s}_1 & \mathfrak{s}_{0} & \mathfrak{s}_2 \\
    \mathfrak{s}_2 & \mathfrak{s}_{1} & \mathfrak{s}_3\\
  \end{matrix}
 \right)
=2 \det\left(
  \begin{matrix}
    2&0&8\\
    8&2&48\\
    48&8&352\\
  \end{matrix}
 \right) = 3\cdot 2^8.
\]
\end{exam}

Note that if $\alpha$ is a reverse hook, i.e., $\alpha=(b^a)/((b-1)^{a-1})$, it is easy to see that
\[
\sum_{\pi\in \RPP(\alpha)}  q^{|\pi|} = \sum_{t\ge0} q^t \qbinom{a+t-1}{t}\qbinom{b+t-1}{t},
\]
where 
\[
\qbinom{n}{m} = \frac{(q;q)_n}{(q;q)_m(q;q)_{n-m}}.
\]
Therefore, if $\lm$ is a thick reverse hook $((b+k)^{a+k})/(b^a)$, we obtain the following formula as a corollary of Theorem~\ref{thm:LP1}. 

\begin{cor}\label{cor:rhook}
Let $\lambda=((b+k)^{a+k})$ and $\mu=(b^a)$. Then
\[
\sum_{\pi\in \RPP(\lm)}  q^{|\pi|} = q^{-\frac{k(k-1)(3a+3b+4k+1)}6}
\det\left( \sum_{t\ge0} q^t \qbinom{a+t+i-1}{t}\qbinom{b+t+j-1}{t}
\right)_{i,j=1}^k.
\]  
\end{cor}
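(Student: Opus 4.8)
The plan is to obtain Corollary~\ref{cor:rhook} as a direct application of Theorem~\ref{thm:LP1} to $\lm=((b+k)^{a+k})/(b^a)$, so the work splits into three parts: exhibiting the Kreiman outer decomposition and checking the two hypotheses, identifying the matrix entries $E_\lambda(s_i,t_j;q)$, and evaluating the rank prefactor. Here $\lambda$ is the full $(a+k)\times(b+k)$ rectangle, so its southmost cells of columns are exactly the bottom row and its eastmost cells of rows are exactly the right column.

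First I would describe the decomposition explicitly. I claim $\lm$ is the disjoint union of $k$ nested $\lambda$-Dyck paths $L_1,\dots,L_k$, where $L_i$ runs from the bottom-row cell $s_i=(a+k,\,k+1-i)$ to the right-column cell $t_i=(k+1-i,\,b+k)$; the innermost path $L_k$ hugs the boundary of $\mu$, the outermost path $L_1$ fills the southeast corner, and each $L_i$ is the ribbon lying immediately to the southeast of $L_{i+1}$. One checks directly that these paths are disjoint, that their union is $\lm$, and that each starts at a southmost cell of a column and ends at an eastmost cell of a row, so by the uniqueness of the Kreiman outer decomposition this is it. Moreover $L_1<L_2<\dots<L_k$ in the order of Section~\ref{sec:lascoux-pragacz-type}, because each inner path lies strictly to the northwest of the previous one; hence the poset $P$ is a chain, which is ranked with $r_i=i-1$. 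This gives the first hypothesis of Theorem~\ref{thm:LP1}.

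For the second hypothesis, observe that whenever $L_i<L_j$ the path $L_j$ is an inner path: it leaves $s_j$ by an up step and enters $t_j$ by a down step, so its first step is up and its last step is down. Each peak $u=(x,y)$ of $L_j$ is an interior turning point, hence $x<a+k$ and $y<b+k$; since $\lambda$ is the full rectangle, $u+(1,1)=(x+1,y+1)\in\lambda$, so every peak is a $\lambda$-high peak. Both hypotheses hold, so Theorem~\ref{thm:LP1} applies. Next I would identify the entries: the lowest $\lambda$-Dyck path $L_\lambda(s_i,t_j)$ from $s_i$ to $t_j$ hugs the bottom and right edges of the rectangle, hence is the L-shaped reverse hook spanning $a+j$ rows and $b+i$ columns. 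By the reverse-hook evaluation recorded just before the corollary,
\[
E_\lambda(s_i,t_j;q)=\sum_{t\ge0} q^t \qbinom{a+j+t-1}{t}\qbinom{b+i+t-1}{t},
\]
which is precisely the $(j,i)$ entry of the matrix in the statement; as a determinant is invariant under transposition, $\det(E_\lambda(s_i,t_j;q))_{i,j=1}^k$ equals the determinant in the corollary.

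Finally I would compute the prefactor produced by Theorem~\ref{thm:LP1}. Counting steps gives $|L_i|=(a+k-(k+1-i))+(b+k-(k+1-i))+1=a+b+2i-1$, and with $r_i=i-1$,
\[
\sum_{i=1}^k r_i|L_i|=\sum_{i=1}^k (i-1)(a+b+2i-1)=\frac{k(k-1)(3a+3b+4k+1)}{6},
\]
so $q^{-\sum_i r_i|L_i|}$ is exactly the stated prefactor. Combining the three ingredients yields the identity. The only genuinely delicate point is the first two paragraphs: verifying that the explicit nested diagonal ribbons really are the Kreiman outer decomposition, that they form a chain, and that all peaks of the inner paths are high. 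Once this combinatorial picture is secured, the identification of the entries and the arithmetic of the prefactor are routine.
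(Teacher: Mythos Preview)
Your proposal is correct and follows exactly the route the paper indicates: Corollary~\ref{cor:rhook} is stated as an immediate consequence of Theorem~\ref{thm:LP1} together with the reverse-hook RPP evaluation recorded just before it, and you have filled in precisely the verifications (the nested-ribbon Kreiman decomposition, the chain poset with $r_i=i-1$, the first/last step and $\lambda$-high peak checks for the rectangle, the identification of $E_\lambda(s_i,t_j;q)$ with the transposed matrix entry, and the arithmetic for the exponent) that the paper leaves implicit. There is nothing to add.
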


\begin{remark}
Theorem~\ref{thm:LP1} is not applicable to $\lm$ for arbitrary rectangular shapes $\lambda$ and $\mu$. 
For example, if $\lm=(6,6,6,6)/(3,3)$, the Kreiman outer decomposition of $\lm$
has three $\lambda$-Dyck paths $L_1<L_2<L_3$ as shown in Figure~\ref{fig:rect}. 
Since $L_2$ does not start with an up step, the conditions in Theorem~\ref{thm:LP1} are not satisfied.
\end{remark}

\begin{figure}[h]
  \centering
\includegraphics{./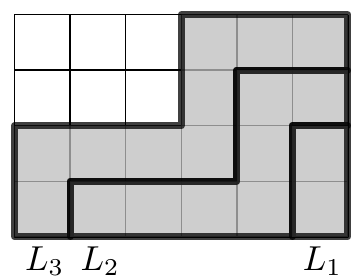} 
  \caption{The Kreiman outer decomposition of $(6,6,6,6)/(3,3)$.}
  \label{fig:rect}
\end{figure}

Let $\lambda=(b^a)$,  $s$ the southmost cell in the first column of $\lambda$ and $t$ the eastmost cell in the first row of $\lambda$. 
Then every $\lambda$-Dyck path in $\Dyck_\lambda(s,t)$ is completely determined by its valleys. By considering the valleys
of the $\lambda$-Dyck paths, one can prove that, for a reverse hook $\alpha=(b^a)/((b-1)^{a-1})$, 
\[
p(\alpha)=\sum_{t\ge0} 2^{a+b-t-1}\binom{a-1}t \binom{b-1}t.
\]
Thus, as a special case of Corollary~\ref{cor:LP2}, we have a determinant formula for $p(\lm)$ when  $\lm$ is a thick reverse hook $((b+k)^{a+k})/(b^a)$. 
\begin{cor}\label{cor:rhook2}
Let $\lambda=((b+k)^{a+k})$ and $\mu=(b^a)$. Then
\[
p(\lm) = 2^{\binom k2} \det\left( \sum_{t\ge0} 2^{a+b+i+j-t-1}\binom{a+i-1}t \binom{b+j-1}t
\right)_{i,j=1}^k.
\]  
\end{cor}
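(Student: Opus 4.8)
The plan is to obtain Corollary~\ref{cor:rhook2} as the pleasant-diagram analogue of Corollary~\ref{cor:rhook}, replacing Theorem~\ref{thm:LP1} by its pleasant-diagram counterpart Corollary~\ref{cor:LP2}; the only genuinely new inputs are the explicit count $p(\alpha)=\sum_{t\ge0}2^{a+b-t-1}\binom{a-1}{t}\binom{b-1}{t}$ of a reverse hook $\alpha=(b^a)/((b-1)^{a-1})$ recorded just above the statement, together with some index bookkeeping. First I would record the Kreiman outer decomposition of the thick reverse hook $\lm=((b+k)^{a+k})/(b^a)$. Viewing $\lambda=((b+k)^{a+k})$ as an $(a+k)\times(b+k)$ rectangle, $\lm$ is the annulus of uniform thickness $k$ obtained by deleting the top-left $a\times b$ block. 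I claim that its decomposition consists of exactly $k$ mutually disjoint $\lambda$-Dyck paths $L_1,\dots,L_k$, where $L_i$ starts at the southmost cell $s_i=(a+k,i)$ of column $i$ and ends at the eastmost cell $t_i=(i,b+k)$ of row $i$; each $L_i$ hugs the boundary of the annulus and is in general a multi-corner Dyck path whose turns lie along the inner and outer borders. Small cases such as $((b+2)^{a+2})/(b^a)$ (where one finds $L_2<L_1$ with $s_1=(a+2,1),t_1=(1,b+2)$ and $s_2=(a+2,2),t_2=(2,b+2)$) confirm the pattern.

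Next I would verify the hypotheses of Theorem~\ref{thm:LP1} for this decomposition. The paths form a single chain $L_k<\dots<L_1$ under the relation $<$ of Section~\ref{sec:lascoux-pragacz-type}, so the associated poset $P$ is ranked with ranks $0,1,\dots,k-1$ and hence $\sum_{i=1}^k r_i=\binom k2$. The condition that, whenever $L_i<L_j$, the larger path $L_j$ begins with an up step, ends with a down step, and has all peaks being $\lambda$-high peaks, must then be checked for every non-minimal path. The $\lambda$-high-peak requirement is in fact automatic here: since $\lambda$ is a full rectangle, any peak $u$ of any $\lambda$-Dyck path has $u+(1,0)\in\lambda$ and $u+(0,1)\in\lambda$, whence $u+(1,1)\in\lambda$. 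The first/last-step conditions follow from the geometry, since the non-minimal (outer) paths start strictly below the inner border and end strictly to the right of it. This verification is the step I expect to require the most care, as it is where the shape of $\lm$ enters, but it runs exactly parallel to the check already made for $\delta_{n+2k}/\delta_n$ in the proof of Corollary~\ref{cor:rhook}.

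Finally I would apply Corollary~\ref{cor:LP2}, which yields
\[
p(\lm)=2^{\sum_{i=1}^k r_i}\det\left(p(L_\lambda(s_i,t_j))\right)_{i,j=1}^k=2^{\binom k2}\det\left(p(L_\lambda(s_i,t_j))\right)_{i,j=1}^k.
\]
The lowest path $L_\lambda(s_i,t_j)$ in the full rectangle $\lambda$ runs along the bottom row from $s_i$ to the corner $(a+k,b+k)$ and then up the last column to $t_j$, so it is a reverse hook of height $H=a+k-j+1$ and width $W=b+k-i+1$. Substituting $A=H$, $B=W$ into the reverse-hook count gives $p(L_\lambda(s_i,t_j))=\sum_{t\ge0}2^{\,a+b+2k-i-j+1-t}\binom{a+k-j}{t}\binom{b+k-i}{t}$. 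Replacing $i\mapsto k+1-i$ and $j\mapsto k+1-j$ (which reverses both the rows and the columns of the determinant, leaving it unchanged since the two reversals contribute $(-1)^{2\binom k2}=1$) and transposing turns each entry into $\sum_{t\ge0}2^{\,a+b+i+j-1-t}\binom{a+i-1}{t}\binom{b+j-1}{t}$, which is precisely the matrix in the statement. The factor $2^{\binom k2}$ then matches $2^{\sum r_i}$, completing the plan; beyond Corollary~\ref{cor:rhook} the only new computation is the substitution of the reverse-hook pleasant count, the remaining machinery being identical but specialized to pleasant diagrams rather than reverse plane partitions.
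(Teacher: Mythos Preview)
Your proposal is correct and follows exactly the route the paper indicates: specialize Corollary~\ref{cor:LP2} to the thick reverse hook, plug in the reverse-hook pleasant count $p(\alpha)=\sum_{t\ge0}2^{a+b-t-1}\binom{a-1}{t}\binom{b-1}{t}$ for each entry $p(L_\lambda(s_i,t_j))$, and reindex. One small slip: in the chain $L_k<\cdots<L_1$ the non-minimal paths $L_1,\dots,L_{k-1}$ are the \emph{inner} ones (those hugging $\mu$), not the outer ones---it is precisely because $s_i=(a+k,i)$ sits in column $i<k$ while the neighbouring cell $(a+k,i+1)$ belongs to $L_{i+1}$ that the first step of $L_i$ is forced to be an up step, and dually for the last step.
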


\section*{Acknowledgement}

The authors would like to thank Alejandro Morales for his helpful comments which motivated Section~\ref{sec:lascoux-pragacz-type}.


\end{document}